\newcommand{\epsi}{\epsilon}
\newcommand{\Z}{\mathbb Z}
\newcommand{\R}{\mathbb R}
\newcommand{\E}{\mathbb E}
\newcommand{\Zd}{\mathbb Z^d}
\newcommand{\eps}{\ensuremath{\epsilon}}
\newcommand{\pee}{\ensuremath{\mathbb{P}}}
\def\1{{\mathchoice {\rm 1\mskip-4mu l} {\rm 1\mskip-4mu l}
{\rm 1\mskip-4.5mu l} {\rm 1\mskip-5mu l}}}
\newtheorem{theorem}{{\small T}{\scriptsize HEOREM}}[section]
\newtheorem{corollary}[theorem]{{\bf{\small C}{\scriptsize OROLLARY}}}
\newtheorem{proposition}[theorem]{{\bf{\small P}{\scriptsize ROPOSITION}}}
\newtheorem{lemma}[theorem]{{\bf{\small L}{\scriptsize EMMA}}}
\newtheorem{remark}[theorem]{{\bf{\small R}{\scriptsize EMARK}}}
\newtheorem{assumption}[theorem]{{\bf{\small C}{\scriptsize ONDITION}}}
\newtheorem{definition}[theorem]{{\bf{\small D}{\scriptsize EFINITION}}}
\renewenvironment{proof}[1]
{\noindent{{\bf{\small{P}{\scriptsize ROOF}}}.}\hspace{0.1cm} #1} {$\;\qed$\newline}
\newcommand{\beq}{\begin{eqnarray}}
\newcommand{\eeq}{\end{eqnarray}}
\newcommand{\ba}{\begin{align*}}
\newcommand{\ea}{\end{align*}}
\newcommand{\be}{\begin{equation}}
\newcommand{\ee}{\end{equation}}
\newcommand{\bl}{\begin{lemma}}
\newcommand{\el}{\end{lemma}}
\newcommand{\br}{\begin{remark}}
\newcommand{\er}{\end{remark}}
\newcommand{\bt}{\begin{theorem}}
\newcommand{\et}{\end{theorem}}
\newcommand{\bd}{\begin{definition}}
\newcommand{\ed}{\end{definition}}
\newcommand{\bp}{\begin{proposition}}
\newcommand{\ep}{\end{proposition}}
\newcommand{\bc}{\begin{corollary}}
\newcommand{\ec}{\end{corollary}}
\newcommand{\bpr}{\begin{proof}}
\newcommand{\epr}{\end{proof}}
\newcommand{\bi}{\begin{itemize}}
\newcommand{\ei}{\end{itemize}}
\newcommand{\ben}{\begin{enumerate}}
\newcommand{\een}{\end{enumerate}}
\newcommand{\SIP}{\text{SIP}}
\newcommand{\SEP}{\text{SEP}}
\renewcommand{\(}{\left(}        \renewcommand{\)}{\right)}
\renewcommand{\[}{\left[}        \renewcommand{\]}{\right]}
     \newcommand{\nn}{\nonumber}
\newcommand{\col}[1]{\textcolor[rgb]{0,0,0}{#1}}
\begin{document}
%%%% BEGINNING OF THE DOCUMENT
\title{Exact formulas for two interacting particles and\\
applications in particle systems with duality\\
\vspace{1.cm}}
\author{
Gioia Carinci$^{\textup{{\tiny(a)}}}$\\
Cristian Giardin{\`a}$^{\textup{{\tiny(b)}}}$\\
Frank Redig$^{\textup{{\tiny(a)}}}$
\vspace{1.cm}\\
{\small $^{\textup{(a)}}$ Delft Institute of Applied Mathematics, Technische Universiteit Delft}\\
{\small Mekelweg 4, 2628 CD Delft, The Netherlands}\\
{\small $^{\textup{(b)}}$ Department of Mathematics, University of Modena and Reggio Emilia}\\
{\small via G. Campi 213/b, 41125 Modena, Italy}
}

\maketitle

\begin{abstract}

We consider two particles performing continuous-time nearest neighbor random walk on $\Z$ and interacting with each other when they
are at neighboring positions. Typical examples are two particles in the partial exclusion process  or in the inclusion process.
We provide an {\em exact formula} for the Laplace-Fourier transform of the transition probabilities of the two-particle dynamics.
From this we derive a general {\em scaling limit} result, which shows that the possible scaling limits are coalescing Brownian motions,
reflected Brownian motions, and sticky Brownian motions.

In particle systems with duality, the solution of the dynamics of two dual particles provides relevant information.
We apply the exact formula to the the symmetric inclusion process, that is self-dual, in the  {\em condensation regime}.
We thus obtain two results. First, by computing the time-dependent covariance of the particle occupation 
number at two lattice sites we characterise the time-dependent coarsening
in infinite volume when the process is started from a homogeneous product measure.
Second, we identify the limiting variance of the density field in the diffusive scaling limit,
relating it to the local time of sticky Brownian motion. 

%We conjecture this expression to be generic for the  scaling limit of systems with a condensation regime, thus yielding a 
%novel Gaussian limiting random field. 

\end{abstract}

\newpage
\section{Introduction}
In interacting particle systems, duality is a powerful tool which enables to study time-dependent
{\em correlation functions of order $n$} with the help of {\em $n$ dual particles}. Examples include 
the exclusion processes \cite{dmp,ligg} and the inclusion processes \cite{gkr}, related diffusion processes 
such as the Brownian Energy process \cite{gkrv}, and  stochastic energy exchange processes, such as 
Kipnis-Marchioro-Presutti model \cite{KMP}.

\vspace{0.1cm}

For systems defined on the infinite lattice $\Zd$, these dual particles typically  spread out and behave on large scale as independent Brownian motions.
This fact is usually enough to prove the hydrodynamic limit in the sense of propagation of local equilibrium \cite{dmp}: under a diffusive scaling limit the
macroscopic density profile evolves according to the heat equation. To study the {\em fluctuations of the density field} one needs to understand 
{\em two dual particles}. Likewise, solving the dynamics of a finite number $n$ of dual particles one gets control on the
$n^{\text{th}}$ moment of the density field.
While the dynamics of one dual particle is usually easy to deal with (being typically a continuous-time random walk),
the  dynamics of $n$ dual particles is usually an hard problem (due to the interaction of the
dual walkers) that can be solved only in special systems, i.e. stochastic integrable systems \cite{borodin0, borodin,schutz,schutz10,tw,spohn}.
In such systems the equations for the time-dependent correlation functions essentially decouple and the transition probability 
of a finite number of dual particles can be solved using methods related to quantum integrability, e.g. Bethe ansatz,Yang Baxter equation, 
and factorized S-matrix. For instance, by studying the two-particle problem for the asymmetric simple
exclusion process, it was proved in \cite{schutz11} that the probability distribution of the particle density of 
only two particles spreads in time diffusively, but with a diffusion coefficient that is notably different from the 
non-interacting case.

\vspace{0.1cm}
In this paper we prove that in dimension $d=1$ and for nearest neighbor jumps with translation invariant rates, a generic system of two (dual) particles turns out to be {\em exactly solvable}, i.e., one can
obtain a closed-form formula for the Laplace-Fourier transform of the transition probabilities in the coordinates of the center of mass and
the distance between the particles. The derivation of this formula, and its applications in particle systems with duality, is the main message of this paper.

\vspace{0.1cm}

The exact formula for the two-particle process will be stated in Theorem \ref{Teo:Green}.
We shall provide conditions on the particles jump rates so that
the formula holds. These conditions include both the case where the particles  have symmetric interactions or the case
where the asymmetry is ``naive'', meaning that it is obtained from the symmetric system by multiplying the rates of jumps to the right
by a parameter $p$ and the rates of jumps to the left by a parameter $q$, with $p\neq q$.

From Theorem \ref{Teo:Green} we obtain a general {\em scaling limit} result (Theorem \ref{scalingthm}), 
which shows that the possible scaling limits of two
interacting random walkers are coalescing Brownian motions (``{absorbed regime'}'),
reflecting Brownian motions (``{reflected regime}'') and {\em sticky Brownian motions} 
(attracting each other via their local intersection time) which interpolate between 
the absorbed and reflected regime, and \col{where the particles spend some positive proportion of time 
at the same place}.

Next, we consider applications of Theorem \ref{Teo:Green} to systems with duality.
We focus, in particular, on the inclusion process \cite{gkr}.
\col{Due to the attractive interaction between particles this model has
a condensation regime \cite{gross,bianchi}. As a consequence of
Theorem \ref{scalingthm}, the scaling limit of the two-particle dynamics
yields sticky Brownian motions.} Furthermore, \col{by using duality}, we study the scaling behavior of the 
variance and covariance of the number of particles in the condensation regime (Theorem \ref{P:variance}). 
This provides better understanding of the coarsening  process (building up of large piles of particles)
when starting from a homogeneous initial product measure in infinite volume.
\col{Last, we study in Theorem \ref{variancethm} the time-dependent variance of the density fluctuation 
field of the inclusion process in the condensation regime. We conjecture the expression 
that we find in Theorem \ref{variancethm}  to be a universal expression describing the time-dependent
variance of the density fluctuation field of systems displaying condensation phenomena, when started 
from a non-equilibrium initial state.}

\section{Model definitions and results}

\subsection{The setting}

We start by defining a general system of particles moving on $\mathbb{Z}$ and interacting when they are nearest neighbor. 
The system is modeled by a continuos-time Markov chain and thus is defined by assigning the process generator.

\begin{definition}[Generator]
\label{gen-def}
Let  $\{\eta(t): \, t\geq 0\}$ be a particle system on the integer lattice, where
$\eta_i(t)\in \Omega\subseteq \mathbb{N}$ 
denotes the number of particles at site ${i\in \mathbb{Z}}$ at time $t$.    
The particles evolve according to the formal generator:
\begin{equation}
\label{general}
 [\mathcal{ L} f](\eta)=\sum_{i\in \mathbb Z}\left\{c_+(\eta_i, \eta_{i+1})\[f(\eta^{i,i+1})-f(\eta)\]+ \,c_-(\eta_{i+1}, \eta_{i})\[f(\eta^{i+1,i})-f(\eta)\]\right\}\,.
%\begin{array}{ll}
%\eta \longrightarrow \eta^{i,i+1} & \qquad \, c_+(\eta_i, \eta_{i+1})\\
%\eta \longrightarrow \eta^{i+1,i} & \qquad  \,c_-(\eta_{i+1}, \eta_{i})
%\end{array}
\end{equation}
\end{definition}
In the above, $\eta^{i,j}$ is the configuration that is obtained from $\eta\in \Omega^{\mathbb{Z}}$ by removing a particle at site $i$ and adding it at site $j$, i.e.
\begin{equation*}
\eta^{i,j}_k = \left\{
\begin{array}{ll}
\eta_i - 1 & \text{if } k=i,\\
\eta_j + 1 & \text{if } k = j,\\
\eta_k & \text{otherwise } .
\end{array} \right.
\end{equation*}

%Thus the process $\{\eta(t):t\geq 0\}$ counts the number of particles at each site $i\in\Z$ as a function of time.
\noindent
The specific systems that we shall consider below include cases where $\eta_i$, the number of particles at site $i$, has a maximum 
(i.e.  $\Omega$ is a finite set,  as in the partial exclusion processes \cite{schutz}), as well as cases where any natural number is allowed 
(i.e. $\Omega = \mathbb{N}$, as in the inclusion processes \cite{gkr}). 
The rates $c_{+}(\eta_i,\eta_{i+1})$ and $c_{-}(\eta_{i+1},\eta_i)$ in \eqref{general} \col{are allowed to be general} functions of the particle numbers $\eta_i$
and $\eta_{i+1}$ such that there is a well defined Markov process associated to the formal generator $\mathcal{ L}$.

% provided that the process is well-defined, i.e.  the following limit exists and defines a {\em contraction semigroup}
%\be
%S_t := \lim_{n\to\infty}\left(I - \frac{t}{n}{\cal L}\right)^n\;.
%\ee
%\CG{Please check the  sentence above, form "provided that...."}
\vskip.2cm
\noindent
When the rates of the right jumps are equal to the rates of the left jumps, i.e.
\begin{equation*}
c_{+}(n,m) = c_{-}(n,m) \qquad \forall \;\; n,m\in \Omega ,
\end{equation*}
we shall say that the system is {\em symmetric}.
For some of our results, symmetry of the jumps is not required.
However, if there is asymmetry, we need it to be of a specific type 
that is explained below.

%\subsection{Conditions on the jump rates.}\label{sect:3}
\subsection{Two interacting particles}

We shall be interested in the case where the process $\{\eta(t): \,t\geq 0\}$ with generator \eqref{general}
%is {\em self-dual} \cite{ligg}
is initialized with {\em two particles}.
%Clearly the two particles process will involve only the rates $c_{\pm}(n,m)$ for integers couples $(n,m)$ such
%that $n+m \le 2$.
In the following we will restrict to the case where the following conditions are fulfilled.
\begin{assumption}[Rates of two-particle process]
\label{assumi}
For the rates in \eqref{general}  we assume that:
\begin{itemize}
\item[(i)]
%the process  $\{\eta(t):t\geq 0\}$ is well-defined, i.e. no explosion occurs for any finite time $t >0$;
%\item[2)]
they are translation invariant, i.e. for all
$\eta\in \Omega^{\mathbb{Z}}$ and for all $a\in \mathbb{Z}$
\begin{eqnarray*}
c_{+}(\eta_i,\eta_{i+1}) & =  & c_{+}((\tau_a \eta)_{i+a}, (\tau_a \eta)_{i+1+a})  \\
c_{-}(\eta_{i+1},\eta_i) & = & c_{-}((\tau_a \eta)_{i+1+a}, (\tau_a \eta)_{i+a}), \nonumber
\end{eqnarray*}
where $\tau_a$ denotes the shift by $a$ on $\mathbb{Z}$, i.e.
$(\tau_a \eta)_i = \eta_{i-a}$;
\item[(ii)]  for integers couples $(n,m)$ such
that $n+m \le 2$, they satisfy
%$\alpha=\beta$ i.e.
\begin{equation}
\label{cond1}
c_+(2,0)+c_-(2,0)=2\(c_+(1,0)+c_-(1,0)\)\;,
\end{equation}
%$q=\bar q=\hat q$ and $p=\bar p=\hat p$,  which is implied by that fact that
%the ratio
\begin{equation}
\label{cond2}
\frac{c_+(1,0)}{c_-(1,0)} = \frac{c_+(1,1)}{c_-(1,1)} = \frac{c_+(2,0)}{c_-(2,0)}\;.
\end{equation}
%is a constant, i.e., does not depend on $k,m$.
\end{itemize}
\end{assumption}

\vspace{0.5cm}
\noindent
Equation \eqref{cond1} is for instance implied by linearity of the rates in the number of particles at the departure site,
i.e. $c_+(\eta_i,0) = A\eta_i$ and $c_{-}(\eta_{i+1},0) = B \eta_{i+1}$ with $A,B > 0$.
Equation \eqref{cond2} refers instead to the degree of asymmetry in the jumps: it is for instance satisfied when
the ratio
$$
\frac{c_+(\eta_i,\eta_{i+1})}{c_-(\eta_i,\eta_{i+1})}
$$
is a constant, i.e. does not depend on $\eta_i,\eta_{i+1}$. This happens in symmetric systems or in systems 
with {\em naive asymmetry} that are obtained from the symmetric systems multiplying by two different constants the rates of the left
jumps and those of the right jumps.
%$c_+(\eta_i, \eta_{i+1}) = q c(\eta_i, \eta_{i+1})$ and $c_-(\eta_i, \eta_{i+1} )= p c(\eta_i, \eta_{i+1})$
%for some $c:\mathbb{N}^2\to(0,\infty)$.

\vspace{0.2cm}

Given rates that fulfill Condition \ref{assumi}, it will be convenient to define three parameters
that  characterize the dynamics of two particles.
\begin{definition}[Parameters] 
We define the parameters
\begin{eqnarray}
&&\alpha:=
c_+(1,0)+c_-(1,0),
\label{alpha-par}\label{alph}\\
&& p := \frac 1 \alpha \, c_+(1,0)
\label{p}
\\
&& \theta := \frac{c_+(1,1)+c_-(1,1)}{c_+(1,0)+c_-(1,0)} - 1
\;.
\end{eqnarray}
%\bar p+ \bar q=\hat p+\hat q=1$.
\noindent
\end{definition}
\br
Clearly, to define in general the two-particle process one needs to assign six rates, namely $c_{\pm}(1,0)$, $c_{\pm}(2,0)$ and $c_{\pm}(1,1)$.
If Condition \ref{assumi} holds then equations \eqref{cond1} and \eqref{cond2} leave three free choices. The parameters defined above are to be interpreted as follows:
\ben
\item
$\alpha >0$ is total rate for a single particle to jump if both left and right neighboring sites are empty;
\item
$0<p<1$ is the probability of a particle to jump to the right when its neighboring sites are empty;
for convenience we denote by $q=1-p$ the probability of a left jump.
\item
$\theta\in \mathbb{R}$ is a parameter
tuning the strength of the interaction between the
two  particles. In particular, for $\theta=0$ one recovers the case where the particles perform two independent random walks.
\een
\er

\subsection{Distance and center of mass coordinates}

One main goal of this paper is to achieve a full control of the dynamics of the two-particle process.
To this aim it will be convenient to move to new coordinates.
Consider the process $\{\eta(t):t\geq 0\}$ with generator \eqref{general} initialized 
with two particles and denote by $(x_1(t),x_2(t))$ the particle positions at time $t$, 
with an arbitrary labeling of the particles,
but fixed once for all. Define the {\em distance} and {\em sum} coordinates by
\begin{equation}\label{wup}
\begin{array}{ll}
w(t):=|x_2(t)-x_1(t)|\;,\\
u(t):=\;\; x_1(t)+x_2(t)\;.
\end{array}
\end{equation}
By definition, the  distance and sum coordinates are not depending on the chosen labeling of particles.
As a consequence of the fact that the size of the particle jumps is one, both the difference and the sum coordinates change by one unit at every particle jump. Therefore they both perform continuous-time 
simple random walk.
Under  Condition \ref{assumi},  a \col{straightforward} computation starting from  \eqref{general}
shows that the distance process $\{w(t):\, t\ge 0\}$, that is valued in $\mathbb{N} \cup \{0\}$, evolves 
according to the generator
%\begin{eqnarray}
%\label{gen-w}
%[{\mathcal L}^{(dist)} f](w)&=&2\alpha\;\(1+\frac \theta 2\,\mathbf 1_{w=1}\) \bigg\{ \(\frac 1 2  \,\mathbf 1_{w>1}+\frac 1 {\theta +2}\,\mathbf 1_{w=1}+\mathbf 1_{w=0}\) [f(w^{+})-f(w)] +\nn\\
%&&\hskip3.2cm+ \(\frac 1 2  \,\mathbf 1_{w>1}+\frac{\theta+1} {\theta+2}\,\mathbf 1_{w=1}\)[f(w^{-})-f(w)] \bigg\}\;,
%\end{eqnarray}
\begin{equation}
\label{gen-w}
[{\mathcal L}^{(dist)} f](w)= \left\{
\begin{array}{ll}
2 \alpha (f(w+1) -f(w)) & \text{if } w=0,\\
& \\
\alpha  (f(w+1) -f(w)) + \alpha (\theta+1) (f(w-1) -f(w)) & \text{if } w=1,\\
& \\
\alpha  (f(w+1) -2f(w) + f(w-1)) & \text{if } w \ge 2 .
\end{array} \right.
\end{equation}
%where $w\in \mathbb{N}$ and $w^{\pm} = w \pm 1$. 
As for the sum coordinate $\{u(t)\,:\; t \ge 0\}$, this is a process that is valued in $\mathbb{Z}$.
One finds that, {\em conditionally on $\{w(t), \, t\ge 0\}$}, it evolves according to
\begin{eqnarray}\label{ucond}
&&[{\mathcal L}^{(sum)} f](u)=2\alpha\;\(1+\frac \theta 2\,\mathbf 1_{w=1}\)\big\{p[f(u+1)-f(u)] + q [f(u-1)-f(u)]\big\}\;.
\end{eqnarray}
%where $u\in \mathbb{Z}$ and $u^{\pm} = u \pm 1$.

%\br
%The fact that the distance coordinate $w(t)$ evolves in an autonomuos way is a consequence of the translation invariance of the rates in the original particle system; conditioned on the process $w(t)$, the process $u(t)$ is a continuos time asymmetric
%random walk with a total jump rate depending on w(t), but because of condition \eqref{cond1} the asymmetry in the jumps of $u(t)$ does not depend on $w$.
%\er

We thus see that the distance between the particles evolves in an {\em autonomous} way 
as a symmetric random walk on the integers, reflected at 0 and with a defect in 1. The sum coordinate instead
is dependent on the distance through its jump rate. More precisely, the sum performs
an asymmetric continuous-time nearest neighbor random walk driven by a Poisson process whose rate
depends on the distance process. These properties of the distance and sum coordinates
will be the key properties that will be used in the exact solution of the two-particle dynamics.

Such exact solution will be expressed by considering the Fourier-Laplace transform
of the transition probability
%. We thus consider the transition probabilities 
%$p_t((u,w),(u',w'))$ of the process
%$\{ (u(t),w(t)): t \ge 0\}$ defined in \eqref{wup}, starting from $(u,w) \in \mathbb{Z} \times  \mathbb{N}\cup \{0\}$,
%i.e.
\be
\label{trans}
P_t((u,w),(u',w')) = \mathbb{P}(u(t) = u', w(t) = w' \,|\, u(0) = u, w(0) = w),
\ee
where $\mathbb{P}$ denotes the law of the two-particle process.
As it can be seen from the generators \eqref{gen-w} and \eqref{ucond},
these transition probabilities are translation invariant only in the sum coordinate,
i.e., $\col{P}_t((u,w),(u',w'))= \col{P}_t((0,w), (u'-u,w'))$, and therefore it is natural that we 
take Fourier transform w.r.t. the sum coordinate. Furthermore it will also be convenient to 
take Laplace transform w.r.t. time.
\bd[Fourier-Laplace transform of the transition probability]
\label{main-def}
Let the parameter $\alpha$ defined in \eqref{alpha-par} be equal to $1$ and
let $\col{P}_t\((u,w),(u',w')\)$ be the transition probability \col{in} \eqref{trans}.
We define the Laplace transform of the transition probability 
%with respect to time $t\ge 0$
\begin{equation}\label{GL}
\mathcal{G}^{(\theta)}((u,w),(u',w');\lambda):= \int_0^\infty e^{-\lambda t }\col{P}_t\((u,w),(u',w')\)\, dt, \qquad \lambda\ge 0
\end{equation}
and its Fourier transform 
%with respect to $v = u'-u$ via
\begin{equation}\label{GF}
G^{(\theta)}(w,w',\kappa,\lambda):=\sum_{v \in \mathbb Z}e^{-i\kappa v}\, \mathcal{G}^{(\theta)}\((0,w),(v,w');\lambda\), \qquad \kappa \in \mathbb{R}\;.
\end{equation}
\ed
\br[Changing $\alpha$] \label{alfa}
Notice that the parameter $\alpha>0$ in \eqref{ucond} appears as a multiplicative factor in the generator, therefore
for a generic value of this parameter we have the scaling property
\be
\label{scale}
\mathcal G^{(\theta,\alpha)}((u,w),(u',w');\lambda)= \frac{1}{\alpha}\mathcal{G}^{(\theta)}\((u,w),(u',w');\frac{\lambda}{\alpha}\),
\ee
where we made the $\alpha$-dependence explicit.
The $\mathcal{G}^{(\theta)}$  in \eqref{GL} coincides with $\mathcal G^{(\theta,1)}$ and for a generic value of $\alpha$ we can use \eqref{scale}.
\er

\subsection{Processes with duality}
\label{dual-section}

Besides the inherent interest of studying the two particle process (and its scaling limits) we will be interested  
in applications of the exact solution of the two-particle dynamics. This applications will be given 
in the context of interacting particle systems with self-duality.  
\begin{definition}[Self-duality]
Let $\{\eta(t):t\geq 0\}$ be a process of type introduced in Definition \ref{gen-def}. 
We say that the process is self-dual with self-duality function 
$D: \Omega \times \Omega \to \mathbb{R}$ if
for all $t\geq 0$ and for all $\eta,\xi \in \Omega^{\mathbb{Z}}$ 
we have the self-duality relation
\be\label{selfdualityrel}
\E_\eta D(\xi,\eta(t))= \E_\xi D(\xi(t),\eta),
\ee
where $\{\xi(t):t\geq 0\}$ is an independent copy of the process with generator
\eqref{general}. In the above $\E_\eta$ on the l.h.s. denotes expectation in
the original process initialized from the configuration $\eta$ and 
$\E_\xi$ on the r.h.s. denotes expectation in
the copy process initialized from the configuration $\xi$.
We shall call $D$ a factorized self-duality function when
\be
D(\xi,\eta)= \prod_{i\in \mathbb{Z}} d(\xi_i,\eta_i).
\ee
The function $d(\cdot,\cdot)$ is then called the single-site 
self-duality function.
\end{definition}
\noindent
\col{For self-dual processes the dynamics of two particles provides relevant information about
the time-dependent correlation functions of degree two}.
The simplest possible choice satisfying  Conditions \ref{assumi} and the 
self-duality property is the choice of rates that are linear in both the departure 
and arrival sites. For later convenience we  shall
refer to the process with this choice of the 
rates as the {\em reference process}. It is given by the generator
\begin{equation}
\label{referencep}
\mathcal [\mathcal{ L}_{\text{ref}} f](\eta)=	\col{\frac{\alpha}{2}}\sum_{i\in \mathbb Z}\left\{\eta_i(1+\theta \eta_{i+1})\[f(\eta^{i,i+1})-f(\eta)\]+\eta_{i+1}(1+\theta \eta_i) \,\[f(\eta^{i+1,i})-f(\eta)\]\right\}\,.
\end{equation}
%\CG{Is it really used the reference process in the following?Otherwise it is better to promote the exclusion and inclusion to definitions (and eliminate the reference process and the independent random walkers?)}
This is a process whose behavior depends on the sign of $\theta$.
It corresponds to the exclusion process for $\theta<0$, to the inclusion process for $\theta>0$, and
to independent random walkers for $\theta=0$. 
%In particular, in the case $p=q=1/2$ these processes are self-dual, and therefore the dynamics of two 
%particles provides relevant information about
%time dependent correlation functions of degree two.
We now explain the precise connection between these processes 
and the reference process \eqref{referencep}.

\subsubsection{The symmetric inclusion process}
The Symmetric Inclusion Process with parameter $k>0$, denoted $\SIP(k)$, is the process with generator \col{\cite{gkr}}
\be
\label{sip-gen}
L_{\SIP(k)} f(\eta)=\sum_{i\in\Z}\left( \eta_i(k+\eta_{i+1})\nabla_{i,i+1} + \eta_{i+1} (k +\eta_i)\nabla_{i+1,i}\right)f(\eta),
\ee
%\CG{$k \to 2k$ ?}
where $\nabla_{i,\ell} f(\eta)= f(\eta^{i,\ell})-f(\eta)$.
This amounts to choose the parameters in the reference process \eqref{referencep} as follows
\beq\label{sippara}
\theta=\frac{1}{k}\qquad\text{and}\qquad \alpha=\frac{2}{\theta} =2k,
\eeq
Conversely, the reference process with generator \eqref{referencep} corresponds to 
a time rescaling of the $\SIP$ process, i.e.
\be
\label{dictionary}
\eta^{\text{ref}}(t) = \eta^{\SIP(\frac{1}{\theta})}\left(\frac{\theta \alpha}{2}t\right).
\ee
The $\SIP(k)$ is self dual with single-site self-duality function:
\beq\label{dsip}
d_{\SIP(k)}(m,n)=
\frac{n!\Gamma(k)}{(n-m)! \Gamma(k + m)}\;\mathbf 1_{\{ m\leq n\}}.
\eeq
This self-duality property with self-duality function \eqref{dsip} continues to hold when
$\theta=\frac{1}{k}$ for all
other values of $\alpha>0$.
%, because changing $\alpha$ corresponds to changing the time scale of the process.
\subsubsection{The  symmetric partial exclusion process}
 \col{We recall the definition of} the Symmetric partial Exclusion Process with parameter $j\in \mathbb N$, $\SEP(j)$ \col{\cite{SS}}. Notice that $j$,
 that  is the maximum number of particles allowed for each site,
 has to be a natural number. For $j=1$ the process is the standard Symmetric Exclusion Process.
The generator is
\be
\label{sep-gen}
L_{\SEP(j)} f(\eta)= \sum_{i\in\Z} \left( \eta_i(j-\eta_{i+1})\nabla_{i,i+1} + \eta_{i+1} (j -\eta_i)\nabla_{i+1,i}\right)f(\eta),
\ee
%\CG{$j \to 2j$ ?}
i.e., comparing with the reference process \eqref{referencep} we have
\beq\label{sippara2}
\theta=-\frac{1}{j},\qquad\alpha=-\frac{2}{\theta}=2j.
\eeq
The Symmetric partial Exclusion Process $\SEP(j)$ is self-dual with  single-site self-duality function:
\beq\label{dsep}
d_{\SEP(j)}(m,n) =
\frac{{n\choose m}}{{j\choose m}}\;\mathbf 1_{\{ m\leq n\}}.
\eeq
As before, this self-duality property with self-duality function \eqref{dsep} continues to hold when $\theta=-1/j$ for all
other values of $\alpha>0$.

\subsubsection{Independent symmetric random walk}
 The last example is provided by a system of independent random walkers (IRW). In this case
the generator is
\be
\label{irw-gen}
L_{\text{IRW}} f(\eta)= \sum_{i\in\Z} \left(\eta_i\nabla_{i,i+1} +\eta_{i+1}\nabla_{i+1,i}\right)f(\eta),
\ee
which implies, comparing with the reference process \eqref{referencep}, that
\beq\label{nasipparaaa}
\alpha
=2, \qquad  \theta=0.
\eeq
In this process we have self-duality with  single-site self-duality function:
\beq\label{dirw}
% d_{\text{IRW}}(m,n)={{n\choose m}}\;\mathbf 1_{\{ m\leq n\}}.
\col{d_{\text{IRW}}(m,n)={\frac{n!}{(n-m)!}}\;\mathbf 1_{\{ m\leq n\}}.}
\eeq
As before, this self-duality property with self-duality function \eqref{dirw} continues to hold when $\theta=0$ for all
other values of $\alpha>0$.

\subsection{Main results}
\label{main-results}

We state  now  our main results. Without loss of generality,
as it was done in Definition \ref{main-def},  we will always choose in the following 
$\alpha =1$, where $\alpha$ is the parameter
defined in \eqref{alph}. 
The case of general $\alpha$ just corresponds to a rescaling of time, 
i.e. $t'=\alpha t$ (cf. Remark \ref{alfa}).

\subsubsection{Exact solution of the two-particle dynamics}

We start by providing the formula for the \col{Fourier-Laplace} transform
of the transition probability of the distance and sum coordinates.

\bt[Fourier-Laplace transform for the distance and sum coordinates]
\label{Teo:Green}
Under  Condition \ref{assumi},
the Fourier-Laplace transform in Definition \ref{main-def} is given by
\begin{eqnarray}
 \label{G-main}
&& G^{(\theta)}(w,w',\kappa,\lambda)= \frac {f^{(\theta)}_{\lambda,\kappa}(w,w')} {\mathcal Z^{(0)}_{\lambda,\kappa}}
\left\{ \zeta_{\lambda,\kappa}^{|w'-w|-1}+\zeta_{\lambda,\kappa}^{w'+w-1} 
\left( 2\,  \frac{\mathcal Z_{\lambda,\kappa}^{(0)} }{\mathcal Z^{(\theta)}_{\lambda,\kappa}}-1 \right)\right\},
\end{eqnarray}
with
%
% \begin{eqnarray}
% \label{f-funct}
%f_{\lambda,\kappa}^{(\theta)}(w,w')= 1 + \mathbf 1_{w'=0} \cdot \frac 1 2 \, \{\theta-1 +\theta (\nu_\kappa^{-1} \zeta_{\lambda,\kappa}-1)\mathbf 1_{w=0}\}
%\end{eqnarray}
%
\begin{equation}
\label{f-funct}
f_{\lambda,\kappa}^{(\theta)}(w,w')= \left\{
\begin{array}{ll}
\frac{\theta \nu_{\kappa}^{-1}\zeta_{\lambda,\kappa} +1}{2} & \text{if  \; } w=0, w'=0\\
& \\
\frac{\theta+1}{2}& \text{if \; } w\ge1, w'=0\\
& \\
1 & \text{if \; } w \ge 1, w' \ge 1,
\end{array} \right.
\end{equation}
and
\begin{eqnarray}
\mathcal Z^{(\theta)}_{\lambda,\kappa}
&=&\nu_\kappa (\zeta_{\lambda,\kappa}^{-2}-1) +2\theta (x_{\lambda,\kappa}-\nu_\kappa),
\end{eqnarray}
where
\begin{equation}\label{zeta}
 \zeta_{\lambda,\kappa}:=\zeta\(x_{\lambda,\kappa}\)=x_{\lambda,\kappa}-\sqrt{x_{\lambda,\kappa}^2-1}, \qquad x_{\lambda,\kappa}:=\frac 1 {\nu_\kappa}\(1+\frac \lambda {2}\),
\end{equation}
\be
\nonumber
\nu_\kappa=\cos(\kappa)-i (p-q) \, \sin(\kappa).
\ee
\\
\et
\br[Meaning of $\nu_k$ and $\zeta(x)$]
One recognizes that  $\nu_\kappa$ is the Fourier transform of the increments of the discrete time asymmetric random walk on $\mathbb{Z}$ 
moving with probability $p$ to the right and $q$ to the left.
%starting at 0 at time zero.
Furthermore, as it will be clear from the proof of Theorem \ref{Teo:Green}, the function $ \zeta(x)$ appearing in \eqref{zeta} is related to the probability generating function of $S_0$, the first hitting time of the origin $0$ of the discrete time asymmetric random walk moving with probability $p$ to the right and $q$ to the left starting at 1 at time zero. More precisely
\be
\zeta(x) = \mathbb{E}_1(x^{-S_0}), \qquad \qquad x\ge 1\;.
\ee
\er

In order to give more intuition for the formula in Theorem \ref{Teo:Green}, we transform to leftmost and rightmost position coordinates.
In this coordinates the comparison between the interacting ($\theta\neq 0)$ and non-interacting ($\theta = 0)$ case
becomes more transparent.
Let $(x(t),y(t))$ be the coordinates defined by
\be
x(t) := \min\{x_1(t),x_2(t)\} \qquad y(t) := \max\{x_1(t),x_2(t)\},
\ee
where  $(x_1(t),x_2(t))$ denote the particle positions. We define the Laplace transform
\begin{equation}
\label{laplace-left-right}
\Pi^{(\theta)}((x,y),(x',y');\lambda):= \int_0^\infty \pi_t\((x,y),(x',y')\) e^{-\lambda t} dt,
\end{equation}
where $\pi_t$ denotes the transition probability of the process
$\{ (x(t),y(t)): t \ge 0\}$ started from $(x,y) \in \mathbb{Z} \times  \mathbb{Z}$.

\begin{corollary}[Positions of leftmost and rightmost particle]
\label{for}
Under  Condition \ref{assumi} and assuming  $x\neq y$,
the Laplace transform in \eqref{laplace-left-right} is given by
 \begin{equation}
 \label{left-right-formula}
 \Pi^{(\theta)}((x,y),(x',y');\lambda)=
\left\{
\begin{array}{ll}
A^{(\theta)}_{+}(x'-x,y'-y,\lambda)+A^{(\theta)}_{-}(y'-x,x'-y,\lambda)& \quad \text{if} \quad y'>x'\\
A^{(\theta)}_{0}(x'-x,x'-y,\lambda) & \quad\text{if} \quad y'=x'
\end{array}
\right.
\end{equation}
where
\be
A_{\pm,0}^{(\theta)}(x,y,\lambda):=
\frac {1}{8\pi^2}\int_{-\pi}^\pi \, \int_{-\pi}^\pi \, \frac{\Gamma^{(\theta)}_{\pm,0}(\frac{\kappa_1+\kappa_2}2,\lambda)\,e^{ i(\kappa_1x+\kappa_2y)}}{
1 + \frac{\lambda}{2}- \(\cos(\frac{\kappa_2+\kappa_1}2)-i(p-q)\sin\(\frac{\kappa_2+\kappa_1}2\)\)\cos(\frac{\kappa_2-\kappa_1}2)} \, d\kappa_1\, d\kappa_2
\ee
%&&\Theta_0^\theta(x,y,\lambda)=
%\frac {1}{4\alpha(2\pi)^2}\int_{-\pi}^\pi \, \int_{-\pi}^\pi \, \frac{\{\theta+1 +\theta (\nu_\kappa^{-1} \zeta_{\lambda,\kappa}-1)\mathbf 1_{x=y}\}\,
%e^{ i(\kappa_1x+\kappa_2y)}}{\(1+\frac \lambda{2\alpha}\)- \nu(\frac{\kappa_2+\kappa_1}2)\cos(\frac{\kappa_2-\kappa_1}2)} \, d\kappa_1\, d\kappa_2 \nn\\
and
\begin{eqnarray}
 \Gamma^{(\theta)}_{+}(\kappa,\lambda):=1, \qquad \quad \Gamma^{(\theta)}_{-}(\kappa,\lambda):= 2\,  \frac{\mathcal Z_{\lambda,\kappa}^{(0)} }{\mathcal Z^{(\theta)}_{\lambda,\kappa}}-1, \qquad \quad
 \Gamma^{(\theta)}_{0}(\kappa,\lambda) :=  \(\theta+1 \)\,  \frac{\mathcal Z_{\lambda,\kappa}^{(0)} }{\mathcal Z^{(\theta)}_{\lambda,\kappa}}.
%&& \Gamma^\theta_{0}(\kappa,\underline x) :=  \left\{\theta+1 +\theta\(\frac{\zeta_{0,\kappa}}{\cos \kappa}-1\)\mathbf 1_{x=y}\right\}\,  \frac{\mathcal Z_{0,\kappa}^{(0)} }{\mathcal Z^{(\theta)}_{0,\kappa}}\nn
\end{eqnarray}
\end{corollary}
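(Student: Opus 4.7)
The plan is to reduce the corollary to a Fourier inversion of the formula from Theorem \ref{Teo:Green}. First I would identify $\pi_t$ with the transition probability $P_t$ of the $(u,w)$ process: since the generator \eqref{general} is invariant under swapping the two particle labels, the labeled process started at $(x_1,x_2)=(x,y)$ satisfies $\pi_t((x,y),(x',y'))=P_t((x+y,y-x),(x'+y',y'-x'))$ in the generic case $x'\ne y'$, and an analogous equality with second coordinate $(2x',0)$ in the coincident case. Taking the Laplace transform in $t$, invoking translation invariance of $\mathcal G^{(\theta)}$ in the sum coordinate, and inverting the Fourier transform \eqref{GF} yields
\be
\Pi^{(\theta)}((x,y),(x',y');\lambda) \;=\; \frac{1}{2\pi}\int_{-\pi}^\pi G^{(\theta)}(y-x,y'-x',\kappa,\lambda)\, e^{i\kappa(x'+y'-x-y)}\, d\kappa .
\ee

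The heart of the proof is the identity
\be
\frac{\zeta_{\lambda,\kappa}^{|m|-1}}{\mathcal Z^{(0)}_{\lambda,\kappa}} \;=\; \frac{1}{4\pi}\int_{-\pi}^\pi \frac{e^{i\tilde\kappa m}}{1+\lambda/2-\nu_\kappa\cos\tilde\kappa}\,d\tilde\kappa,\qquad m\in\Z,
\ee
which represents the distance-process Green function as a Fourier integral in a second dual variable $\tilde\kappa$. I would prove it from the standard series $\sum_{m\in\Z}\zeta^{|m|}e^{-i\tilde\kappa m}=(1-\zeta^2)/(1-2\zeta\cos\tilde\kappa+\zeta^2)$, together with two elementary consequences of \eqref{zeta}: the relation $\zeta+\zeta^{-1}=(2+\lambda)/\nu_\kappa$, which recasts $1-2\zeta\cos\tilde\kappa+\zeta^2=(2\zeta/\nu_\kappa)(1+\lambda/2-\nu_\kappa\cos\tilde\kappa)$, and $\nu_\kappa(1-\zeta^2)=\zeta^2\,\mathcal Z^{(0)}_{\lambda,\kappa}$.

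Armed with this representation, substituting \eqref{G-main} into the preceding Fourier-inversion integral for $y'>x'$ splits $\Pi^{(\theta)}$ into a ``direct'' piece coming from $\zeta^{|w'-w|-1}/\mathcal Z^{(0)}$ and a ``reflected'' piece coming from $\Gamma_-^{(\theta)}\zeta^{w+w'-1}/\mathcal Z^{(0)}$, with $w=y-x$ and $w'=y'-x'$. Applying the key identity with sign $+\tilde\kappa$ in the direct term and sign $-\tilde\kappa$ (available by evenness of $\zeta^{|m|}$ in $m$) in the reflected term produces two double integrals over $(\kappa,\tilde\kappa)\in[-\pi,\pi]^2$. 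The linear change of variables $\kappa_1=\kappa-\tilde\kappa,\ \kappa_2=\kappa+\tilde\kappa$ simultaneously linearizes the two phases into $(x'-x)\kappa_1+(y'-y)\kappa_2$ and $(y'-x)\kappa_1+(x'-y)\kappa_2$, respectively, and converts the denominator into the one appearing in \eqref{left-right-formula}. The Jacobian factor $1/2$ of this map is compensated by the fact that the image of $[-\pi,\pi]^2$ is a $45^\circ$-rotated square of area $8\pi^2$, which is exactly two fundamental domains of the $(2\pi\Z)^2$-periodic integrand; restricting to a single copy $[-\pi,\pi]^2$ in $(\kappa_1,\kappa_2)$ then reproduces exactly $A_+^{(\theta)}(x'-x,y'-y,\lambda)+A_-^{(\theta)}(y'-x,x'-y,\lambda)$.

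The coincident case $y'=x'$ is handled analogously: the prefactor $f^{(\theta)}(y-x,0)=(\theta+1)/2$ in \eqref{f-funct} makes the two summands in \eqref{G-main} collapse into $(\theta+1)\zeta^{y-x-1}/\mathcal Z^{(\theta)}=\Gamma_0^{(\theta)}(\kappa,\lambda)\cdot\zeta^{y-x-1}/\mathcal Z^{(0)}$, and the same argument then delivers $A_0^{(\theta)}(x'-x,x'-y,\lambda)$. I expect the main obstacle to be the verification of the key integral identity and the careful bookkeeping of the change of variables, in particular the matching between the Jacobian $1/2$ and the area-doubling of the rotated square that reproduces the final factor $1/(8\pi^2)$ stated in the corollary.
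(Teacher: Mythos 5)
Your proposal is correct and follows essentially the same route as the paper: identify $\Pi^{(\theta)}$ with $\mathcal G^{(\theta)}$ in sum/distance coordinates, invert the Fourier transform in $\kappa$, represent $\zeta_{\lambda,\kappa}^{|m|-1}/\mathcal Z^{(0)}_{\lambda,\kappa}$ as a dual Fourier integral over $\tilde\kappa$ (your key identity is exactly the paper's expansion $\sum_{x}\zeta^{|x|}e^{-imx}=(1-\zeta^2)/(1+\zeta^2-2\zeta\cos m)$ combined with $\zeta+\zeta^{-1}=(2+\lambda)/\nu_\kappa$), and then perform the rotation $\kappa_1=\kappa-\tilde\kappa$, $\kappa_2=\kappa+\tilde\kappa$. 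Your treatment of the Jacobian and the periodic re-tiling of the rotated square is in fact more explicit than the paper's, which passes over this bookkeeping silently.
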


From the above formula we immediately see that the Fourier transform of $A^{(\theta)}_{\pm,0}(x,y,\lambda)$ is given by
\begin{eqnarray}
\label{acca}
\widehat A_{\pm,0}^{(\theta)}(\kappa_1,\kappa_2,\lambda)=
%\frac{\Gamma^\theta_{\pm,0}(\frac{\kappa_1+\kappa_2}2, \lambda)}{2\alpha\left\{1 + \tfrac{\lambda}{2}- \cos(\frac{\kappa_2+\kappa_1}2)\cos(\frac{\kappa_2-\kappa_1}2)\right\}} \nn\\
\frac{\Gamma^{(\theta)}_{\pm,0}(\frac{\kappa_1+\kappa_2}2,\lambda)}{
2 + \lambda-(\cos\kappa_1+\cos\kappa_2)+(p-q)i(\sin\kappa_1+\sin\kappa_2)}.
\end{eqnarray}
Notice
that for $\theta=0$ we have $\Gamma^{(0)}_{\pm,0}(\kappa,\lambda)=1$, and thus  
we recover the  Fourier-Laplace transform of the transition probability of two independent random walkers. 
%Indeed we have   , and then
%\be
%\widehat A_{\pm,0}^{(0)}(\kappa_1,\kappa_2,\lambda)=\frac{1}{2 +  \lambda-(\cos\kappa_1+\cos\kappa_2)+(p-q)i(\sin\kappa_1+\sin\kappa_2)}\;.
%\ee
%Thus, from Eq. \eqref{acca}, we see that the correction due to the interaction of the particles is
%all contained in the factors
%$\Gamma^{(\theta)}_{\pm,0}(\frac{\kappa_1+\kappa_2}2,\lambda)$, and  the difference between the interacting random walkers and the independent ones can be quantified in Fourier-Laplace language by the quantity
%\begin{eqnarray}\label{D}
% D^{(\theta)}_{\pm,0}(\kappa_1,\kappa_2,\lambda)& := & \widehat A_{\pm,0}^{(\theta)}(\kappa_1,\kappa_2,\lambda)-\widehat A_{\pm,0}^{(0)}(\kappa_1,\kappa_2,\lambda)
%\end{eqnarray}
%that will be useful in Section \ref{six}. In particular we see that on large space-time scale, the interacting particles are behaving like independent random walkers
%because $\Gamma^{(\theta)}_{\pm}(\frac{\kappa_1+\kappa_2}2,\lambda)\to1$ as $\kappa_1,\kappa_2,\lambda\to 0$.

\subsubsection{Scaling limits in the symmetric case}

Our second main result is related to the characterization of the scaling limit of the two-particle process in the symmetric case
(\col{corresponding to the parameter $p$ in \eqref{p} equal to 1/2}). We thus consider a diffusive scaling of space and time. 
Being $\alpha=1$ (cf. the beginning of Section \ref{main-results}), this leaves only $\theta >0$ as a free parameter.
Given a scaling parameter $\epsilon>0$, we  
define
\be
U_\epsilon(t):=\frac{\eps\, u(\eps^{-2}t)}{\sqrt 2}, \qquad\quad W_\epsilon(t):=\frac{\eps\, w(\eps^{-2}t)}{\sqrt 2} \;.
\ee
We also assume that the \col{initial values can depend on $\epsilon$, i.e. $u_{\eps}= u(0)$ and $w_{\epsilon} = w(0)$, and we define}
\be\label{i.c.}
U:=\lim_{\eps\to 0}\frac{\eps \col{u_{\eps}}}{\sqrt 2},\qquad\quad W:=\lim_{\eps\to 0}\frac{\eps \col{w_{\eps}}}{\sqrt 2}.
\ee
\col{with $U\in \mathbb{R}$ and $W\in\mathbb{R_+}$}.
\col{Similarly} we suppose $\theta$ to be a function of $\epsilon$, and thus write $\theta_\eps$ and we distinguish three different regimes as $\eps \to 0$:
\begin{itemize}
\item[(a)] {\bf Reflected Regime:}
$\lim_{\eps\to 0}\eps\theta_\eps =0$
\item[(b)]  {\bf Sticky Regime:} $\theta_\eps>0$ and $\eps\theta_\eps= O(1)$. In this regime we define
\be\label{gamma}
 \gamma:= \lim_{\eps\to0}\frac{\eps\theta_\eps}{\sqrt 2 } \in (0,\infty)
\ee
\item[(c)] {\bf Absorbed Regime:} $\theta_\eps>0$ and
$
 \lim_{\eps\to0}\eps\theta_\eps= +\infty
$
\end{itemize}
%\vskip.2cm
%\noindent
%\red{Notice that when $\theta_\eps<0$ only the Sticky and Absorbed  regime makes sense...not for exclusion..}
%\vskip.2cm
%\noindent
We have the following result.

\bt[Scaling limits]\label{scalingthm}
Let $\{B(t) : t \ge 0\}$ and $\{\tilde B(t) : t \ge 0\}$ be two independent Brownian motions starting, respectively,  at $W\ge 0$ and at the origin $0$.
Let $s(t)$ be defined by
\be
s^{-1}(t) = t + \gamma L(t),
\ee
where $L(t)$ is the local time at the origin of $B(t)$,
so that $\{B^S(t) = |B(s(t))| : t\ge 0\}$ is the one-sided \col{sticky} Brownian  motion started at $W$,
\col{with stickiness  at the origin of  parameter $\gamma \in [0,\infty]$}. 
Let $\{B^R(t) : t \ge 0\}$ denote the Brownian motion 
reflected at the origin started at $W\ge 0$ and let \col{$\{B^A(t): t \ge 0 \}$} 
denote the Brownian motion absorbed at the origin started at $W\ge 0$. 
Then the following holds true: under Condition \ref{assumi} we have
\begin{equation}\label{thetaepso}
\lim_{\eps\to 0} \left((U_\eps(t)-U), W_\eps(t)\right)=(U(t),W(t))
\end{equation}
where \col{$\{(U(t),W(t)) : t \ge 0\}$ is defined by $(U(0),W(0)) = (0,W)$ and }
\begin{equation}
\label{pluto}
(U(t),W(t))=
\left\{
\begin{array}{ll}
\col{(\tilde B(t),B^R(t))}
%:= S_{0,W}^R(t) 
& \quad\text{in the Reflected Regime}\\
\\
\col{(\tilde B(2t-s(t)),B^S(t))} 
%:= S^\gamma_{0,W}(t)
& \quad\text{in the Sticky Regime} \\
\\
\col{(\tilde B(2t-t\wedge \tau_W), B^A(t))} 
%:=S^A_{0,W}(t)
& \quad\text{in the Absorbed Regime }
\end{array}
\right.
\end{equation}
where the convergence in \eqref{thetaepso} is in the sense of finite-dimensional distributions and
\col{$\tau_{W}$ in the third line of \eqref{pluto} is the absorption time of $\{B^A(t) : t \ge 0\}$}.
\et

Thus the scaling limit of the two particle process turns out to be two Brownian motions with ``sticky interaction'',
\col{that can be thought of as an interpolation between  two coalescing Brownian motions and two reflecting Brownian motions}.
More precisely, the distance between the particles converges
to a sticky Brownian motion, which in turn has two limiting cases, namely the absorbed and 
reflected Brownian motion.  
On the other hand, the sum of the particle positions becomes a process which is subjected to the
sticky Brownian motion driving the difference and is ``moving at faster rate'' when the particles are together, 
i.e., it is a time-changes Brownian motion of which the clock runs faster with an acceleration determined by 
the local intersection time.

\br[The symmetric inclusion process in the condensation regime]
For the symmetric inclusion process $\SIP(k)$ we say that we are in the condensation regime when the parameter
$k$ tends to zero sufficiently fast,
i.e., when \col{the spreading of the particles} is much slower than the attractive interaction due to the inclusion 
jumps \col{\cite{gross,bianchi}}.
After a suitable rescaling two $\SIP(k)$ particles will then behave as independent Brownian motions which spend
``excessive'' local time together.
The {\em sticky regime} with stickiness parameter $\gamma\in (0,\infty)$ corresponds to the choice
$k= \tfrac{\epsi}{\gamma\sqrt{2}}$, and acceleration of time by a factor $\epsi^{-3}\frac{\gamma}{\sqrt{2}}$.
I.e., this corresponds to the condensation regime $k\to 0$, where time is diffusively rescaled, and
speeded up with an extra $\epsi^{-1}$ in order to compensate for the vanishing diffusion rate.
\er

\br[Exclusion particles scale to reflected Brownian motions]
\label{exclusion-scaling}
For  the exclusion process SEP$(j)$ it is not possible to consider the sticky or the absorbed regime, because $\theta<0$. For this reason we only scale  time diffusively with a factor $2j\eps^{-2}$ and  take $\theta=-1/j$ fixed.
This then corresponds to consider the reflected regime in \eqref{thetaepso} where $(U_\eps(t)-U,W_\eps(t))$  converge to
\col{$(\tilde B(t), |B(t)|)$ where $\tilde B(t)$ is a standard Brownian motion and $B(t)$ is an independent  Brownian motion started at $W$}.
\er

Next we show that the \col{expected} local time of the difference process $\{W_\eps(t),t\geq 0\}$ converges
to the \col{expected} local time of the limiting sticky Brownian motion \col{(in the sense of convergence of the Laplace transform)}. 
Notice that this does not follow from weak 
convergence of the previous Theorem \ref{scalingthm}, but has to
be viewed rather as a result in the spirit of \col{a local limit theorem}.
\bp[Local time in 0]
\label{local0}
We have
\be
\label{primus}
\int_0^\infty e^{-\lambda t}\,\col{\mathbb P_{w}\(w(t)=0\)}\, dt=
 { \zeta_{\lambda}^{w}  }\; \frac{1+\theta \zeta_{\lambda}^{\mathbf 1_{w=0}} }{\zeta_{\lambda}^{-1}+(\theta \lambda-1)\zeta_{\lambda}}
\ee
with
\begin{eqnarray}\label{Zeta}
  \zeta_\lambda:=\zeta_{\lambda,0}:=1+\frac \lambda 2-\sqrt{\lambda+\frac{\lambda^2}4}
\end{eqnarray}
As a consequence, in the Sticky regime we have
\be\label{localstick}
\lim_{\eps \to 0}\int_0^\infty e^{-\lambda t}\,\col{\mathbb P_{w_{\epsilon}}\(W_\eps(t)=0\)}\, dt=
 \frac{ \gamma }{\sqrt{2\lambda}+\gamma \lambda}\,e^{-\sqrt{2\lambda}W}\;
\ee
with $W$ as in \eqref{i.c.}, $\gamma$ as in \eqref{gamma}.
\ep
\br
Notice that the r.h.s. of \eqref{localstick}
is exactly the Laplace transform of the probability  $\mathbb{P}_W(B^S(t) =0)$ of the sticky Brownian motion started at $W\ge 0$ to be at the origin at time $t$, see Lemma \ref{L:ciao}.
\er

\subsubsection{Coarsening in the condensation regime of the inclusion process}

\col{
We now present some results for the symmetric inclusion process $\SIP(k)$,
which is a self-dual process.
Let the time-dependent covariances of the particle numbers at sites $x\in\mathbb{Z}$ and $y\in\mathbb{Z}$ at time $t\ge 0$ be defined as
\be
\label{xi-def}
\Xi^{(\theta)}(t,x,y;\nu)= \int \E_{\eta}\left[ (\eta_x(t)-\rho_x(t))(\eta_y(t)-\rho_y(t))\right] d\nu(\eta),
\ee
where $\nu$ denotes the initial measure (i.e. the initial distribution of the particle numbers) and 
\be\rho_x(t) =  \int \E_{\eta}\left[ \eta_x(t)\right] d\nu(\eta).
\ee
}
The following Theorem gives an explicit result for the variance and the covariance 
of the time-dependent particle numbers
\col{in the {\em sticky regime} of the symmetric inclusion process}  when 
starting from a homogeneous product measure \col{in infinite volume}. In particular, 
we see how the variance diverges when \col{the inclusion parameter $k=1/\theta$ 
goes to zero, which corresponds to the condensation limit} with piling up of particles.

\bt[\col{Scaling of variance and covariances in the sticky regime of the inclusion process}]\label{P:variance}
Let $\{\eta(t) : t \ge 0\}$ be the reference process  with generator \eqref{referencep} with $\alpha=1$ and $\theta>0$
(i.e. the time rescaled inclusion process, see \eqref{dictionary}).
Suppose we are in the Sticky Regime, i.e.  $\eps\theta_\eps= \col{O(1)}$ as $\eps \to 0$.
Let $\gamma$ as in \eqref{gamma} and  let  $a>0$.  
Then, for any initial homogeneous  product measure  $\nu$,  we have:
\begin{itemize}
\item[a)] {\em Scaling of covariances}. If $x\neq y$, then  
\begin{equation}\label{covariance}
\int_0^\infty e^{-\lambda t}\, \Xi^{(\theta_\eps)}(\eps^{-a}t,\lfloor x\eps^{-1}\rfloor,\lfloor y\eps^{-1}\rfloor;\nu) \,dt=
\left\{
\begin{array}{ll}
-\frac{\eps^{\tfrac a  2 -1}}{\sqrt 2 \gamma \lambda}\, e^{-\sqrt{\lambda}|x-y|\eps^{\tfrac a 2-1}}(1+o(1)) & \text{for }1<a<2,\\
\\
-\, \frac{\gamma \rho^2\,e^{-\sqrt{\lambda}|x-y|}}{\sqrt{2\lambda}+\gamma \lambda}\,(1+o(1)) & \text{for }a=2,\\
\\
-\frac{\gamma\rho^2}{\sqrt {2 \lambda}}\;\eps^{\tfrac{a}{2} -1}(1+o(1)) & \text{for } a>2.
\end{array}
\right.
\end{equation}
\item[b)] {\em Scaling of variance.}
If $x=y$, then
\end{itemize}
\be\label{scvariance}
\int_0^\infty e^{-\lambda t}\, \Xi^{(\theta_\eps)}(\eps^{-a}t,\lfloor x\eps^{-1}\rfloor ,\lfloor x\eps^{-1}\rfloor ; \nu) \,dt=
\left\{
\begin{array}{ll}
\frac{2 \rho^2}{\lambda\sqrt \lambda}\,\eps^{-\tfrac a 2}(1+o(1))& \text{for }1<a<2,\\
\\
\, \frac{2 \sqrt 2 \gamma\rho^2}{2\lambda+\gamma \lambda\sqrt{2\lambda}}\,\eps^{-1}\,(1+o(1)) & \text{for }a=2,\\
\\
\frac{\sqrt 2 \, \gamma \rho^2}\lambda \, \eps^{-1}(1+o(1)) &\text{for }a>2.
\end{array}
\right.
\ee
\et

\br[Coarseing]
We see that the r.h.s. of 
\eqref{covariance} has three different regimes which can intuitively be understood as follows.
\ben
\item{\em Subcritical time scale.} In the
first regime, corresponding to ``short times'' we see that the covariance goes to zero as $\epsi\to 0$, which
is a consequence of the initial product measure structure. At the same time we see a scaling corresponding to the Laplace transform of expected local intersection time of coalescing Brownian motions
\col{(cf.\  limit of $\gamma\to\infty$ of \eqref{localstick}.)} This corresponds to the dynamics of large piles (at typical distance $\eps^{-1}$) which merge as coalescing Brownian motions, because
on the time scale under consideration there is no possibility to detach. 
 \item{\em Critical time scale.} In the second regime \col{corresponding to} ``intermediate times'' we see a scaling corresponding to the Laplace transform of expected local intersection time of sticky Brownian motions.
This identifies the correct scale at which the ``piles'' have a non-trivial dynamics, i.e., can interact, merge and detach. This is also the correct time scale for the density fluctuation field (cf.\ Theorem \ref{variancethm}).
\item {\em Supercritical time scale.} 
In the last regime, the covariance is $o(1)$ as $\epsi\to 0$, which corresponds to the stationary regime, in which again a product measure is appearing.
The $1/\sqrt{\lambda}$ scaling corresponds in time variable to $1/\sqrt t$, which corresponds to the probability density of two independent Brownian motions, initially
at $\epsi^{-1}x, \epsi^{-1}y$ to meet after a time $\epsi^{-a}t$, indeed: 
\begin{equation*}
\frac{\exp\left\{ -\frac{ (x-y)^2\epsi^{-2}}{2t\epsi^{-a}}\right\}}{\sqrt{2\pi t \epsi^{-a}}}\approx \frac{\epsi^{a/2}}{\sqrt{2\pi t}} 
\end{equation*}
This corresponds to the fact that on that longer time scale, the stickyness of the piles disappears  and they move as independent particles, unless they are together.
\een
\er

\subsubsection{Variance of the density field in the condensation regime of the inclusion process}

\col{Having identified the relevant scaling of the variance and covariance of the time-dependent particle number, 
we apply this to compute the limiting variance of the rescaled density fluctuation field, which shows a non-trivial 
limiting dependence structure in space and time.}
We consider the density fluctuation field out of equilibrium, i.e., we start the process from an homogeneous invariant 
product measure $\nu$ which is not the stationary distribution and has expected particle number $\int \eta_x d\nu= \rho$ 
\col{for all $x\in\mathbb{Z}$}. More precisely, we study the behavior of  the random time-dependent distribution
which is defined by its action on a Schwarz function \col{$\Phi: \mathbb{R}\to \mathbb{R}$} via
%\CG{Expand this...}
\be\label{densfi}
\mathcal X_\eps(\Phi,\eta,t)= \eps \, \sum_{x\in\mathbb{Z}}\Phi(\eps x)( \eta_x({\eps^{-2}t}) -\rho)
\ee
where $\eta$ in the l.h.s. of \eqref{densfi} refers to the intitial configuration $\eta(0)$ which is distributed according to $\nu$.
Notice that we multiply by $\epsi$ in \eqref{densfi} as opposed to the more common $\sqrt{\epsi}$ which typically  appears in fluctuation fields of particle systems (in dimension one) with a conserved quantity and which then usually converges  to an infinite dimensional Ornstein-Uehlenbeck process see e.g. \cite{landim}, Chapter 11. Here, on the contrary, we are in the condensation regime, and therefore the variance of the particle occupation numbers is of order $\epsi^{-1}$ by Theorem \ref{P:variance}, which explains why we have to multiply with an additional factor
$\sqrt{\epsi}$ in comparison with the standard setting.

\bt[Variance of the density fluctuation field]
\label{variancethm}
Let $\{\eta(t) : t \ge 0\}$ be the reference process  with generator \eqref{referencep} with $\alpha=1$ and $\theta>0$
(i.e. the time rescaled inclusion process, see \eqref{dictionary}).
%Let \col{$\{\eta(t) : t \ge 0\}$ be the symmetric inclusion process $\SIP(k)$} 
%with generator \eqref{general} with $\alpha=1$, $\theta>0$. 
Assume we are in the sticky regime, i.e.  $\theta=\eps\theta_\eps=\mathcal O(1)$ as $\eps \to 0$ and let $\gamma$ as in \eqref{gamma}. 
%\CG{Same remark as before about the definition of the inclusion process... what is the generator to which we refer?}
Let $\nu$  be an initial homogeneous product measure then
\beq\label{vardenslim}
&&\lim_{\eps\to 0}\int_0^\infty e^{-\lambda t}\;\E_{\nu}\big[ \left(\mathcal X_\eps(\Phi,\eta,t)\right)^2\big]\, dt=\nn\\
&& \hskip1cm=\frac{\gamma \rho^2\,}{\sqrt{2\lambda}+\gamma \lambda}\int \Phi(x)\Phi(y)\, e^{-\sqrt{\lambda} |x-y|}\, dx\, dy+ \frac{2 \sqrt 2\, \gamma\rho^2}{2\lambda+\gamma \lambda\sqrt{2\lambda}}
 \int \Phi(x)^2\, dx.
\eeq
\et
\noindent
The limiting variance of the density fluctuation field consists of two terms which both contain the stickyness parameter $\gamma$. The combination of both terms describe how from the initial homogeneous  measure $\nu$ one enters the condensation regime.
% the relaxation to equilibrium, whereas the second term described the stationary variance of the fluctuation field.
Comparing to the standard case of e.g. independent random walkers, we have to replace $\gamma \rho^2$ by $\rho$ in the numerator and replace $\gamma$ by zero in the denominator. Then we exactly recover the variance of the non-stationary density fluctuation field of a system of  independent walkers starting from $\nu$.
So we see that the stickyness introduces a different time dependence of the variance visible in the extra $\lambda$-dependent terms in the denominators of 
the r.h.s. of \eqref{vardenslim}. In particular, \col{in the first term on the r.h.s. of \eqref{vardenslim} we recognize the Laplace transform of the expected local time of sticky Brownian motion}.

\subsection{Discussion}

In this section, we discuss relations to the literature, possible extensions and open problems.

\paragraph{Other applications of \col{Theorem \ref{Teo:Green}}.}
In a forthcoming work  \cite{jara} by the first and third author of this paper, in collaboration with M. Jara, 
the formula for the Laplace-Fourier transform of the transition probability of distance and sum coordinates 
is applied to obtain the second order Boltzmann Gibbs principle, which is a crucial ingredient in the proof of Kardar-Parisi-Zhang
behavior for the weakly asymmetric inclusion process.

\col{\paragraph{Dependence structure and type of convergence.} The difference and sum processes, with generators \eqref{gen-w} and \eqref{ucond}, have a dependence structure similar to their scaling limits in Theorem \ref{scalingthm}.
Namely, one process is autonomous, the other is depending on the first via a local time.
In the scaling limit one further introduces an additional time-change that however does not change
such dependence structure.
The scaling limit result in Theorem \ref{scalingthm} is proved in the sense of finite-dimensional distributions.
One could get a stronger type of convergence by directly studying the scaling limits of the generators
of the distance and sum process. This however poses additional difficulties and is not pursued here.
}

\paragraph{Scaling limits to sticky Brownian motions.} 
We observed in Remark \ref{exclusion-scaling} that exclusion particles always scale to reflected Brownian motions. In \cite{RS} R\'acz and Shkolnikov obtain multidimensional sticky Brownian motions as limits of 
exclusion processes. However this result is proved for a {\em modified} exclusion process,
in which particles slow down their velocities whenever two or more particles occupy adjacent sites.
Under diffusive scaling of space and time this slowing down results into a stickiness and the
process converge to sticky Brownian motion in the wedge  \cite{RS}.

\paragraph{Dualities.}
In this paper we have focused on self-duality of particle systems. However, the same strategy would apply to interacting diffusions that are {\em dual} to particle systems. For instance, there are processes such as  the Brownian Momentum Process \cite{gkr}, the Brownian Energy Process \cite{gkrv}  and the Asymmetric Brownian Energy Process \cite{cgrs}, which are dual to the symmetric inclusion process $\SIP(k)$. As a consequence all the results derived in this paper for the symmetric inclusion process can also be directly translated into results for these processes.

\col{\paragraph{Fluctuation field in the condensation regime.}
As far as we know, our result is the first computation dealing with the fluctuation field of 
the symmetric inclusion process in the condensation regime. We conjecture 
the expression we have found for the variance of the fluctuation field in Theorem \ref{variancethm} to have
some degree of universality within the realm of system exhibiting condensation effects.
Namely,  we believe that the scaling behaviour of the density field in the condensation regime, 
and in particular the appearance of sticky Brownian motion, is generic for systems with condensation
and goes beyond systems with self-duality, e.g. including zero range processes with condensation.
}

\col{\paragraph{Asymmetric processes.}
The main formula in Theorem \ref{Teo:Green} include interacting particle systems with asymmetries 
of certain type, for instance naive asymmetry (see Condition \ref{assumi}). One could then repeat the analysis
of the scaling limit of the two-particle process. In the weak-asymmetry limit one then expects
sticky Brownian motions with drift as limiting processes. The possibility to apply the exact formula 
to asymmetric systems with duality is instead unclear, since in the presence of naive asymmetry self-duality is lost.
One may hope to derive a more general formula for the two particle dynamics that
would apply to systems with asymmetry and self-duality such as ASEP(q,j) \cite{cgrs0}, 
ASIP(q,k) \cite{cgrs}, ABEP(k) \cite{cgrs} processes.
}

\subsection{Organization of the paper.}
The rest of this paper is organized as follows.
Section 3 contains the proof of Theorem \ref{Teo:Green} on the Laplace-Fourier transform of the 
transition probability of the distance and sum coordinates.
In Section 4 we prove Theorem \ref{scalingthm} on the scaling limits of the two particle process. 
In Section 5 we prove applications for particle systems with self-duality. 
We first prove the scaling behavior of the variance and covariances of the particle occupation 
number for the inclusion process in the condensation regime (Theorem  \ref{P:variance}). 
Then we prove the scaling
behavior for the variance of the density field in the same regime (Theorem \ref{variancethm}).

\section{Two-particle dynamics: proof of Theorem \ref{Teo:Green}}

\subsection{Outline of the proof}
The strategy to solve the two particle dynamics has two steps: first we analyze
the autonomous distance process, for which the main challenge is to treat the spatial
inhomogeneity caused by the defect in $1$; second we study the sum process by 
conditioning to the distance.

\smallskip

%\CG{The following text, until eq. (54), seems out of place.. can we move it to a better place?}
%We start by noticing that the relation in \eqref{GF} can be inverted to obtain
%\begin{equation}
%\mathcal G\((0,w);(u',w');\lambda\)=\frac 1 {2\pi}\int_{-\pi}^{\pi}e^{i\kappa u'}G(w,w',\kappa,\lambda )\, dk.
%\end{equation}
%Moreover, from the  translation invariance of the process  with respect to the $u$ coordinate, we have
%\begin{equation}
%P_t\((u,w);(u',w')\) =P_t\((0,w);(u'-u,w')\),
%\end{equation}
%and then
%\begin{equation}
%\mathcal G((u,w);(u',w');\lambda):=\mathcal G((0,w);(u'-u,w');\lambda)=\frac 1 {2\pi}\int_{-\pi}^{\pi} e^{i\kappa (u'-u)}G(w,w',\kappa,\lambda)\, dk.
%\end{equation}
%\vskip.3cm
%\noindent
 Since $u(t)$ and $w(t)$ jump at the same times, we can define
the process $N(t)$  that gives the number of jumps of $(u(t),w(t))$ up to time $t\ge 0$. Notice that for any $t\ge 0$, $u(t)+w(t)\in  w+ 2 \mathbb Z$.
% in particular, $u'+w'\in w +2 \mathbb Z$.
Given the trajectory $\{w(t), \, t\ge 0\}$, $N(t)$ is a Poisson process with time-dependent intensity which  has  the following (time-dependent) generator:
\begin{eqnarray}\label{gen}
Lf(n)=2 \(1+\frac \theta 2\, \mathbf 1_{w_t=1}\)\[f(n+1)-f(n)\].
\end{eqnarray}
\vskip.3cm
\noindent
{In the following proposition we obtain a formula for $G(w,w'\kappa,\lambda)$ in terms of the Poisson clock $N(t)$ exploiting the fact that, conditioned to the path $\{w(t),t\ge 0\}$ the process $u(t)$ performs a standard discrete-time asymmetric random walk, for which we know the characteristic function at any time.}
\bp
We have
\begin{equation}
G(w,w',\kappa,\lambda)= \int_0^\infty g_\kappa(w,w',t) \, e^{-\lambda t}\, dt,
\end{equation}
with
\begin{eqnarray}\label{g}
&& g_\kappa(w,w',t):=\mathbb E_w\[\mathbf 1_{w(t)=w'}\; \nu_\kappa  ^{N(t)}\], \qquad   \nu_\kappa  :=\cos(\kappa)-i\,(p-q) \, \sin(\kappa).
\end{eqnarray}
%\begin{eqnarray}
%&&\sum_{u' \in \mathbb Z}P_t\((0,w);(u',w')\) \, e^{-i\kappa u'}=\mathbb E_w\[\mathbf 1_{w(t)=w'}\; \nu_\kappa  ^{N(t)}\]
%%\mathbb E_w\[\mathbf 1_{w(t)=w'}\; \mathbf E\[\nu_\kappa  ^{N(t)}\, \Big| \, \{w(t),t\ge0\}\]\]
%\end{eqnarray}
%\begin{eqnarray}\label{Lambdak}
%&& \Lambda_{\kappa,w'}(L)= e^{-L}\Big\{\mathbf 1_{w' \in 2\mathbb Z}\cosh(\nu_\kappa   L)+\mathbf 1_{w' \in 2\mathbb Z+1}\sinh(\nu_\kappa   L)\Big\}\nn\\
% &&  \nu_\kappa  :=\cos(\kappa)-i\,(q-p) \, \sin(\kappa), \qquad
%L(t):=2\alpha \(t+\theta \int_0^t \mathbf 1_{w(s)=1}\, ds\)
%\end{eqnarray}
\ep
\bpr
We have that
\begin{eqnarray}
G(w,w',\kappa,\lambda)
&=&\sum_{u' \in \mathbb Z}e^{-i\kappa u'}\, \mathcal G\((0,w);(u',w');\lambda\)\nn\\
&=& \int_0^\infty \, e^{-\lambda t}\,\left(\sum_{u' \in \mathbb Z}P_t\((0,w);(u',w')\) \, e^{-i\kappa u'}\right) dt. \nn
\end{eqnarray}
Then we need to prove that
\begin{equation}\label{quo}
\sum_{u' \in \mathbb Z}P_t\((0,w);(u',w')\) \, e^{-i\kappa u'}=g_\kappa(w,w',t),
\end{equation}
with $g_\kappa(w,w',t)$ as in \eqref{g}.
For $\kappa\in \mathbb R$ we have
\begin{eqnarray}\label{sopra}
&&\sum_{u' \in \mathbb Z}P_t\((0,w);(u',w')\) \, e^{-i\kappa u'}=\nn\\
%&& =\sum_{u' \in \mathbb Z}P\(u(t)=u',w(t)=w'\;| \, u(0)=0, \, w(0)=w\) \, e^{-i\kappa u'}\nn\\
&& =\sum_{u' \in \mathbb Z}\mathbb E_w \Big[ P_t\Big((0,w);(u',w')\;\Big| \,  \, \{w(s), 0 \le s \le t \}\Big)\Big] \, e^{-i\kappa u'}\nn\\
%&& =\sum_{u' \in \mathbb Z}\mathbb E_w \Big[\mathbf 1_{w(t)=w'}\;\cdot \mathbb{P}\Big(u(t) = u' \Big| u(0) = 0, \,\{w(s), 0 \ge s \ge t\}\Big)\Big] \, e^{-i\kappa u'}\nn\\
&& =\mathbb E_w \Big[\mathbf 1_{w(t)=w'}\;\cdot \sum_{u' \in \mathbb Z} \mathbb{P}\Big(u(t) = u' \Big| u(0) = 0, \{w(s), 0\le s \le t\}\Big) \, \cdot e^{-i\kappa u'}\Big].
%&&=\mathbb E_w \Big[\mathbf 1_{w(t)=w'}\;\cdot\nu_\kappa^{N(t)}\Big]
\end{eqnarray}
Let us denote by  $p^{(n)}(u,u')$  the $n$-steps transition probability function of the asymmetric discrete-time random walk 
\col{that jumps to the right with probability $p$ and to the left with probability $q=1-p$.}
%with transition probabilities:
%\begin{equation}\label{qp}
%\begin{array}{lll}
%p \quad \text{ jumps to the right;}\qquad \quad q \quad \text{ jumps to the left}
%\end{array}
%\end{equation}
Then we have
\begin{eqnarray*}
\sum_{u\in \mathbb Z} p^{(1)}\(0,u\)\, e^{-i\kappa u}={q e^{i\kappa }+p e^{-i\kappa }}=\cos(\kappa)-i (p-q) \, \sin(\kappa)=\nu_\kappa,
\end{eqnarray*}
and
\begin{eqnarray}\label{qui}
\sum_{u\in \mathbb Z} p^{(n)}\(0,u\)\, e^{-i\kappa u}=\nu_\kappa^n.
\end{eqnarray}
According to \eqref{ucond}, the conditioned  process $\{u(t) \, |\, \{w(t), t\ge 0\}\}$ is equivalent to $\{u(t)\, |\, N(t)\}$ that reduces to a discrete time random walk on $\mathbb Z$ 
\col{that jumps to the right with probability $p$ and to the left with probability $q=1-p$}.
%with rates \eqref{qp}, 
Thus
\begin{eqnarray*}
&&\sum_{u' \in \mathbb Z}  \mathbb{P}\Big(u(t) = u' \Big| u(0) = 0, \{w(s), 0\le s \le t\}\Big)  \, \cdot e^{-i\kappa u'}\nn \\
&&=\sum_{u' \in \mathbb Z} e^{-i\kappa u'} p^{(N(t))}(0,u')\nn \\
&&=\nu_\kappa  ^{N(t)}.\nn
\end{eqnarray*}
Then \eqref{quo} follows  from \eqref{sopra}.
\epr

%\vskip.5cm
%\noindent
%It  is convenient to rewrite $ \Lambda_{\kappa,w'}(L)$ as follows:
%\begin{eqnarray}\label{Lambdak1}
%&& \Lambda_{\kappa,w'}(L)= \frac 1 2\(e^{-(1-\nu_\kappa  )L}+(-1)^{w'}e^{-(1+\nu_\kappa  )L}\)
%\end{eqnarray}
%then
%\begin{eqnarray}\label{g1}
%&& g_\kappa(w,w',t)=\frac 1 2 \( g^{(1-\nu_\kappa  )}(w,w',t)+(-1)^{w'-w} g^{(1+\nu_\kappa  )}(w,w',t)\)
%\end{eqnarray}
%with
%\begin{eqnarray}\label{gpm}
%&& g^{(a)}(w,w',t):= \mathbb E_w \Big[\mathbf 1_{w(t)=w'}\; e^{-a L(t)}\Big],\qquad a \in \mathbb C
%\end{eqnarray}
%and
%\begin{eqnarray}\label{G1}
%&& G(w,w',\kappa,\lambda)= \frac 1 2 \(G^{(1-\nu_\kappa  )}(w,w',\lambda)+(-1)^{w'-w} G^{(1+\nu_\kappa  )}(w,w',\lambda)\)
%\end{eqnarray}
%with
%\begin{equation}
%G^{(a)}(w,w',\lambda):= \int_0^\infty g^{(a)}(w,w',t) \, e^{-\lambda t}\, dt,\qquad  a \in \mathbb C
%\end{equation}
\noindent
{In the following we obtain a convolution equation for $g_\kappa(w,w',t)$ by conditioning on the first hitting time of the defective site 1. We distinguish several cases, depending on whether $w$ and $w'$ are equal to 0, 1, or larger than 1. When the process is at the right of 1 it can be treated as a standard random walk. This produces a system of linear equations for $G_\kappa(\cdot,\cdot,\lambda)$ that can easily be solved.  }

\subsection{Case $w'=0$}
\begin{enumerate}
\item {\bf Case $\mathbf {w\ge 2.}$}  Denote by $T_1$ the first hitting time of 1 and by $f_{T_1,w}$ \col{its probability density} when the walk  starts  from $w$. From \eqref{gen} it is clear that, for $w\ge 2$, $N(t)$ behaves as a Poisson process with rate $2$ up to time $T_1$, then
\begin{equation}\label{claim}
\mathbb E_w\[\nu_\kappa^{N(T_1)}\big| T_1\]=e^{2(\nu_\kappa-1)T_1}.
\end{equation}
Hence, denoting by $\mathcal F_{T_1}$ the pre-$T_1$ sigma-algebra of the process $w(t)$,
\begin{eqnarray}
 g_\kappa(w,0,t)&&=\mathbb E_w \Big[\mathbf 1_{w(t)=0}\;\cdot \nu_\kappa^{N(t)}\Big]\nn\\
 &&= \mathbb E_w \bigg[  \mathbb E_w \Big[\mathbf 1_{w(t)=0}\;\nu_\kappa^{N(t)}\Big| \mathcal F_{T_1}\Big]\bigg]\nn\\
 &&= \mathbb E_w \bigg[  \nu_\kappa^{N(T_1)}\;\mathbb E_w \Big[\mathbf 1_{w(t)=0}\;\nu_\kappa^{N(t)-N(T_1)}\Big| \mathcal F_{T_1}\Big]\bigg]\nn\\
 &&= \mathbb E_w \bigg[  \nu_\kappa^{N(T_1)}\;\mathbb E_1 \Big[\mathbf 1_{w(t-T_1)=0}\;\nu_\kappa^{N(t-T_1)}\Big]\bigg]\nn\\
 &&=\mathbb E_w\[ \mathbb E_w \Big[  \nu_\kappa^{N(T_1)}\;g_\kappa(1,0,t-T_1)\Big|T_1\Big]\]\nn\\
 &&=\mathbb E_w\[g_\kappa(1,0,t-T_1)\, \mathbb E_w \big[  \nu_\kappa^{N(T_1)}\big|T_1\big]\]\nn\\
 && =  \int_0^t  g_\kappa(1,0,t-s)\, f_{T_1,w}(s)\,   \mathbb E_w \big[  \nu_\kappa^{N(s)}\big|T_1=s\big] \,ds.\nn
%  && =  \int_0^t  \mathbb E_1 \Big[\mathbf 1_{w(t-s)=0}\;\cdot e^{-aL(t-s)}\Big] \,  e^{-2\alpha a s}  \, f_{T_1,w}(s)\, ds
\end{eqnarray}
As a consequence
\begin{equation}\label{conv}
g_\kappa(w,0,t)=\[(h_0 \cdot f_{T_1,w})*g_\kappa(1,0,\cdot)\](t), \quad\quad h_0(t)=  \mathbb E_w \big[  \nu_\kappa^{N(t)}\big|T_1=t\big].
\end{equation}
From the convolution equation \eqref{conv} it follows that
\begin{equation}\label{G1}
G_\kappa(w,0,\lambda)=\Psi_{w}(\lambda) \cdot G_\kappa(1,0,\lambda), \qquad \text{for any }\: w\ge 2,
\end{equation}
where
\begin{eqnarray}\label{psi0}
\Psi_{w}(\lambda)&:= &\int_0^\infty   \mathbb E_w \big[  \nu_\kappa^{N(t)}\big|T_1=t\big]\, f_{T_1,w}(t)\, e^{-\lambda t}\, dt\nn\\
&=& \mathbf E^{\text{IRW}(2)}_w\[e^{-\lambda T_1}\, \nu_\kappa^{N(T_1)}\]
\end{eqnarray}
where $ \mathbf E_w^{\text{IRW}(2)}$ is the expectation w.r. to the probability law of a symmetric  random walk in $\mathbb Z$ with hopping rate $2$, starting at time 0 from $w\ge 2$.
\item  {\bf Case $\mathbf {w=1.}$}
Let $T^{\text{ex}}_i$ be the first exit time from $i$. Then $T^{\text{ex}}_1 \sim \text{Exp}(\theta+2)$, hence
\begin{eqnarray*}
&&g_\kappa(1,0,t)= \mathbb E_1  \bigg[  \mathbb E_1 \Big[\mathbf 1_{w(t)=0}\;\cdot \nu_\kappa^{N(t)}\Big| \,\mathcal F_{T^{\text{ex}}_1}\Big]\bigg]\\
&&= \nu_\kappa\;\mathbb E_1  \bigg[  \mathbb E_{w(T^{\text{ex}}_1)} \Big[\mathbf 1_{w(t-T^{\text{ex}}_1)=0}\;\cdot \nu_\kappa^{N(t-T^{\text{ex}}_1)}\Big]\bigg]\\
&&= \nu_\kappa\;\mathbb E_1  \bigg[ g_\kappa(w(T^{\text{ex}}_1),0,t-T^{\text{ex}}_1)\bigg]%= \nu_\kappa\;\mathbb E_1  \bigg[ \mathbb E_1  \bigg[ g_\kappa(w(T^{\text{ex}}_1),0,t-T^{\text{ex}}_1)\bigg| w(T^{\text{ex}}_1)\bigg]\bigg]
\\
&&=\nu_\kappa\;\left\{ \frac{\theta+1}{\theta+2}\, \mathbb E_1  \bigg[ g_\kappa(0,0,t-T^{\text{ex}}_1)\bigg] + \frac{1}{\theta+2}\,\mathbb E_1  \bigg[ g_\kappa(2,0,t-T^{\text{ex}}_1)\bigg] \right\}\\
%&&= \nu_\kappa\;\mathbb E_1 \bigg[ \mathbb E_1 \bigg[ \mathbf 1_{w(T^{\text{ex}}_1)=0}\,g_\kappa(0,0,t-T^{\text{ex}}_1)+\mathbf 1_{w(T^{\text{ex}}_1)=2}\, g_\kappa(2,0,t-T^{\text{ex}}_1)\bigg| \, w(T^{\text{ex}}_1)\bigg]\\
&&= \nu_\kappa\;\int_0^t \left\{ \frac{\theta+1}{\theta+2}\, g_\kappa(0,0,t-s) + \frac{1}{\theta+2}\, g_\kappa(2,0,t-s) \right\} \, (\theta+2) \, e^{-(1+\gamma)s}\, ds.
\end{eqnarray*}
%with
%\begin{eqnarray}
% \mathbb E_1 \Big[\mathbf 1_{w(t)=0}\;\cdot  e^{-aL(t)}\Big| \, T^{\text{ex}}_1=s\Big]
% & =&  \frac{\gamma}{\alpha+\gamma}\,\mathbb E_1 \Big[\mathbf 1_{w(t)=0}\;\cdot   e^{-aL(t)}\Big| \, T^{\text{ex}}_1=s, \, w(s)=0\Big]\nn \\
%  &+&  \frac{\alpha}{\alpha+\gamma} \,\mathbb E_1 \Big[\mathbf 1_{w(t)=0}\;\cdot   e^{-aL(t)}\Big| \, T^{\text{ex}}_1=s, \, w(s)=2\Big] \nn
%\end{eqnarray}
%then
%\begin{eqnarray}
%&&g_\kappa(1,0,t)=\gamma\int_0^t  \mathbb E_0 \Big[\mathbf 1_{w(t-s)=0}\;\cdot e^{-aL(t-s)}\Big] \, e^{-(\alpha+\gamma+2\alpha (\theta+1)a)s}\, ds \nn \\
%&&+\alpha\int_0^t  \mathbb E_2 \Big[\mathbf 1_{w(t-s)=0}\;\cdot e^{-aL(t-s)}\Big] \, e^{-(\alpha+\gamma+2\alpha(\theta+1)a)s}\, ds \nn
%%&&=\int_0^t \[\gamma g(0,0,t-s)+\alpha g(2,0,t-s)\] \, e^{-(\alpha+\gamma+\theta \Gamma_\kappa )s}\, ds \nn
%\end{eqnarray}
Thus
\begin{equation}
g_\kappa(1,0,t)=(\theta+1) \nu_\kappa [h_1*g_\kappa(0,0,\cdot)](t)+ \nu_\kappa  [h_1*g_\kappa(2,0,\cdot)](t), \qquad \text{with} \quad h_1(t):= e^{-(\theta+2)t}.\nn
\end{equation}
Then we find
\begin{equation}\label{G2}
G_\kappa(1,0,\lambda)=\frac{\nu_\kappa}{\theta+2 +\lambda}\, \[(\theta+1) G_\kappa(0,0,\lambda)+ G_\kappa(2,0,\lambda)\].
\end{equation}
\item  {\bf Case $\mathbf {w=0.}$} Now we have $T^{\text{ex}}_0 \sim \text{Exp}(2)$ then
\begin{eqnarray}
&&g_\kappa(0,0,t)=\mathbb E_0  \bigg[  \mathbb E_0 \Big[\mathbf 1_{w(t)=0}\;\cdot \nu_\kappa^{N(t)}\Big| \,\mathcal F_{T^{\text{ex}}_0}\Big]\bigg]\nn\\
&&=\mathbb E_0  \bigg[ \mathbf 1_{T^{\text{ex}}_0>t}  \,  \mathbb P_0 \Big[{w(t)=0}\Big| \,\mathcal F_{T^{\text{ex}}_0}\Big]\bigg]+\nu_\kappa\,\mathbb E_0  \bigg[ \mathbf 1_{T^{\text{ex}}_0\le t}   \, \mathbb E_1 \Big[\mathbf 1_{w(t-T^{\text{ex}}_0)=0}\;\cdot \nu_\kappa^{N(t-T^{\text{ex}}_0)}\Big]\bigg] \nn\\
&& = \mathbb P_0  \(T^{\text{ex}}_0>t\) + 2 \nu_\kappa\int_0^t  e^{-2 s}\, g_\kappa(1,0,t-s) \, ds, \nn
%&& = \mathbb P_0  \(T^{\text{ex}}_0>t\)+2 \alpha\int_0^t  e^{-2\alpha (a+1) s}\, \mathbb E_1 \Big[\mathbf 1_{w(t-s)=0}\;\cdot e^{-a L(t-s)}\Big] \, ds
\end{eqnarray}
which gives
\begin{equation}
g_\kappa(0,0,t)=h_2(t)+2 \nu_\kappa[h_2*g_\kappa(1,0,\cdot)](t)\qquad \text{with} \quad h_2(t)=  e^{-2 t}.
\end{equation}
Thus
\begin{equation}\label{G3}
G_\kappa(0,0,\lambda)=\frac 1{2 +\lambda}\(1+ 2 \nu_\kappa G_\kappa(1,0,\lambda)\).
\end{equation}
\end{enumerate}
Summarizing, using \eqref{G1},  \eqref{G2} and  \eqref{G3}, we get
\begin{eqnarray}\label{1}
&& G_\kappa(0,0,\lambda)=\frac{2+\theta+\lambda-\nu_\kappa \Psi_2(\lambda)} {\nu_\kappa\mathcal Z_{\lambda,\kappa}}\,\nn \\
&& G_\kappa(1,0,\lambda)=\frac {\theta+1} {\mathcal Z_{\lambda,\kappa}}\,  \nn \\
&& G_\kappa(w,0,\lambda)=\frac{\theta+1} {\mathcal Z_{\lambda,\kappa}}\,  \Psi_{w}(\lambda) \qquad \text{for }\: w \ge 2
\end{eqnarray}
%with
%\begin{equation}
%x_{\lambda,a}:=a+1+\frac \lambda{2\alpha},
%\end{equation}
\begin{equation}
\mathcal Z_{\lambda,\kappa}=\frac1{ \nu_\kappa}\left\{(2+\lambda)(2+\theta+\lambda- \nu_\kappa \Psi_2(\lambda))-2(\theta+1) \nu_\kappa^2\right\}
%\frac 1 {2\alpha^2}\left\{\gamma(2\alpha a +\lambda)+(2\alpha(a+1)+\lambda)(\alpha(1+2(\theta+1)a)+\lambda -\alpha \Psi_{2}(\lambda))\right\}
\end{equation}
and $\Psi_{w}(\lambda)$ as in \eqref{psi0}.

\subsection{Case $w'=1$}
\begin{enumerate}
\item   {\bf Case $\mathbf {w\ge 2.}$} Denote by $T_1$ the first hitting time of 1 and, as before, by $f_{T_1,w}$ \col{its probability density when the walk is} starting from $w$. Then
\begin{eqnarray}
 g_\kappa(w,1,t)&&= \mathbb E_w \bigg[  \mathbb E_w \Big[\mathbf 1_{w(t)=1}\;\nu_\kappa^{N(t)}\Big| \mathcal F_{T_1}\Big]\bigg]\nn\\
% &&= \mathbb E_w \bigg[  \nu_\kappa^{N(T_1)}\;\mathbb E_w \Big[\mathbf 1_{w(t)=1}\;\nu_\kappa^{N(t)-N(T_1)}\Big| \mathcal F_{T_1}\Big]\bigg]\nn\\
 &&= \mathbb E_w \bigg[  \nu_\kappa^{N(T_1)}\;\mathbb E_1 \Big[\mathbf 1_{w(t-T_1)=1}\;\nu_\kappa^{N(t-T_1)}\Big]\bigg]\nn\\
% &&=\mathbb E_w\[ \mathbb E_w \Big[  \nu_\kappa^{N(T_1)}\;g_\kappa(1,1,t-T_1)\Big|T_1\Big]\]\nn\\
 &&=\mathbb E_w\[g_\kappa(1,1,t-T_1)\, \mathbb E_w \big[  \nu_\kappa^{N(T_1)}\big|T_1\big]\]\nn\\
 && =  \int_0^t  g_\kappa(1,1,t-s)\, f_{T_1,w}(s)\,   \mathbb E_w \big[  \nu_\kappa^{N(s)}\big|T_1=s\big] \,ds,\nn
%  && =  \int_0^t  \mathbb E_1 \Big[\mathbf 1_{w(t-s)=0}\;\cdot e^{-aL(t-s)}\Big] \,  e^{-2\alpha a s}  \, f_{T_1,w}(s)\, ds
\end{eqnarray}
so that
\begin{equation}\label{conv2}
g_\kappa(w,1,t)=\[(h_0 \cdot f_{T_1,w})*g_\kappa(1,1,\cdot)\](t).
\end{equation}
It follows that
\begin{equation}\label{G111}
G_\kappa(w,1,\lambda)=\Psi_{w}(\lambda) \cdot G_\kappa(1,1,\lambda), \qquad \text{for any }\: w\ge 2.
\end{equation}
\item  {\bf Case $\mathbf {w=1.}$}
We have
%\begin{eqnarray}
%&&g_\kappa(1,1,t)= \mathbb E_1  \bigg[  \mathbb E_1 \Big[\mathbf 1_{w(t)=1}\;\cdot e^{-aL(t)}\Big| \, T^{\text{ex}}_1\Big]\bigg]\\
%&&\hskip-.7cm= \mathbb E_1  \bigg[ \mathbf 1_{T_1^{\text{ex}}>t} \,\mathbb E_1 \Big[\mathbf 1_{w(t)=1}\;\cdot e^{-aL(t)}\Big| \, T^{\text{ex}}_1\Big]\bigg]+\int_0^t  \mathbb E_1 \Big[\mathbf 1_{w(t)=1}\;\cdot e^{-aL(t)}\Big| \, T^{\text{ex}}_1=s\Big] \, (\alpha+\gamma) \, e^{-(\alpha+\gamma)s}\, ds\nn
%\end{eqnarray}
%with
%\begin{eqnarray}
% \mathbb E_1 \Big[\mathbf 1_{w(t)=1}\;\cdot e^{-aL(t)}\Big| \, T^{\text{ex}}_1=s\Big]
% & =&  \frac{\gamma}{\alpha+\gamma}\,\mathbb E_1 \Big[\mathbf 1_{w(t)=1}\;\cdot e^{-aL(t)}\Big| \, T^{\text{ex}}_1=s, \, w(s)=0\Big]\nn \\
%  &+&  \frac{\alpha}{\alpha+\gamma} \,\mathbb E_1 \Big[\mathbf 1_{w(t)=1}\;\cdot e^{-aL(t)}\Big| \, T^{\text{ex}}_1=s, \, w(s)=2\Big] \nn
%\end{eqnarray}
\begin{eqnarray*}
&&g_\kappa(1,1,t)=  \mathbb E_1 \Big[\mathbf 1_{T_1^{\text{ex}}>t}\Big]+ \mathbb E_1 \Big[\mathbf 1_{T_1^{\text{ex}}\le t}\cdot\,\mathbf 1_{w(t)=1}\;\cdot \nu_\kappa^{N(t)}\Big]\nn\\
&&= \mathbb P_1 \Big({T_1^{\text{ex}}>t}\Big)+ \mathbb E_1  \bigg[  \mathbf 1_{T_1^{\text{ex}}\le t}\cdot\, \mathbb E_1 \Big[\mathbf 1_{w(t)=0}\;\cdot \nu_\kappa^{N(t)}\Big| \,\mathcal F_{T^{\text{ex}}_1}\Big]\bigg]\\
&&=e^{-(2+\theta)t}+ \nu_\kappa\;\mathbb E_1  \bigg[    \mathbf 1_{T_1^{\text{ex}}\le t}\cdot\,
 g_\kappa(w(T^{\text{ex}}_1),0,t-T^{\text{ex}}_1)\bigg]\\
%&&= \nu_\kappa\;\mathbb E_1  \bigg[ g_\kappa(w(T^{\text{ex}}_1),0,t-T^{\text{ex}}_1)\bigg]%= \nu_\kappa\;\mathbb E_1  \bigg[ \mathbb E_1  \bigg[ g_\kappa(w(T^{\text{ex}}_1),0,t-T^{\text{ex}}_1)\bigg| w(T^{\text{ex}}_1)\bigg]\bigg]
%\\
&&=e^{-(2+\theta)t} +\nu_\kappa\;\left\{ \frac{(\theta+1)}{\theta+2}\, \mathbb E_1  \bigg[  \mathbf 1_{T_1^{\text{ex}}\le t}\cdot\, g_\kappa(0,0,t-T^{\text{ex}}_1)\bigg] + \frac{1}{2+\theta}\,\mathbb E_1  \bigg[ \mathbf 1_{T_1^{\text{ex}}\le t}\cdot\,  g_\kappa(2,0,t-T^{\text{ex}}_1)\bigg] \right\}\\
%&&= \nu_\kappa\;\mathbb E_1 \bigg[ \mathbb E_1 \bigg[ \mathbf 1_{w(T^{\text{ex}}_1)=0}\,g_\kappa(0,0,t-T^{\text{ex}}_1)+\mathbf 1_{w(T^{\text{ex}}_1)=2}\, g_\kappa(2,0,t-T^{\text{ex}}_1)\bigg| \, w(T^{\text{ex}}_1)\bigg]\\
&&= e^{-(2+\theta)t}+\nu_\kappa\;\int_0^t \left\{{(\theta+1)}\, g_\kappa(0,0,t-s) + \, g_\kappa(2,0,t-s) \right\} \,  e^{-(2+\theta)s}\, ds.
\end{eqnarray*}
Then
\begin{equation}
g_\kappa(1,1,t)=h_1(t)+\nu_\kappa\[ h_1*\((\theta+1) g_\kappa(0,1,\cdot)+  g_\kappa(2,1,\cdot)\)\](t),
\end{equation}
hence
\begin{equation}\label{G222}
G_\kappa(1,1,\lambda)=\frac{1}{2+\theta +\lambda}\, \[1+(\theta+1) \nu_\kappa G_\kappa(0,1,\lambda)+ \nu_\kappa G_\kappa(2,1,\lambda)\].
\end{equation}

\item  {\bf Case $\mathbf {w=0.}$} Now we have $T^{\text{ex}}_0 \sim \text{Exp}(2)$. We write
\begin{eqnarray}
g_\kappa(0,1,t)&=&\mathbb E_0  \bigg[  \mathbb E_0 \Big[\mathbf 1_{w(t)=1}\;\cdot \nu_\kappa^{N(t)}\Big| \,\mathcal F_{T^{\text{ex}}_0}\Big]\bigg]\nn\\
&&=\nu_\kappa\,\mathbb E_0  \bigg[ \mathbb E_1 \Big[\mathbf 1_{w(t-T^{\text{ex}}_0)=1}\;\cdot \nu_\kappa^{N(t-T^{\text{ex}}_0)}\Big]\bigg] \nn\\
&& = 2 \nu_\kappa\int_0^t  e^{-2 s}\, g_\kappa(1,1,t-s) \, ds, \nn
%&& = \mathbb P_0  \(T^{\text{ex}}_0>t\)+2 \alpha\int_0^t  e^{-2\alpha (a+1) s}\, \mathbb E_1 \Big[\mathbf 1_{w(t-s)=0}\;\cdot e^{-a L(t-s)}\Big] \, ds
\end{eqnarray}
%\begin{eqnarray}
%&&g_\kappa(0,1,t)= \mathbb E_0  \bigg[  \mathbb E_0 \Big[\mathbf 1_{w(t)=1}\;\cdot e^{-aL(t)}\Big| \, T^{\text{ex}}_0\Big]\bigg]\nn\\
%&& =  2\alpha\int_0^t  e^{-2\alpha s}\, \mathbb E_0 \Big[\mathbf 1_{w(t)=1}\;\cdot e^{-aL(t)}\Big| \, T^{\text{ex}}_0=s\Big] \, ds \nn \\
%&& =
%2 \alpha\int_0^t  e^{-2\alpha(a+1) s}\, \mathbb E_1 \Big[\mathbf 1_{w(t-s)=1}\;\cdot e^{-aL(t-s)}\Big] \, ds
%\end{eqnarray}
which implies
\begin{equation}
g_\kappa(0,1,t)=2 \nu_\kappa[h_2*g_\kappa(1,1,\cdot)](t).
\end{equation}
Then
\begin{equation}\label{G333}
G_\kappa(0,1,\lambda)=\frac {2 \nu_\kappa}{2 +\lambda}\,G_\kappa(1,1,\lambda).
\end{equation}

\end{enumerate}
Thus, using \eqref{G111},  \eqref{G222} and  \eqref{G333} we get
\begin{eqnarray}\label{2}
&& G_\kappa(0,1,\lambda)=\frac{2} {\mathcal Z_{\lambda,\kappa}}\nn \\
&& G_\kappa(1,1,\lambda)=\frac {2+\lambda}  { \nu_\kappa\mathcal Z_{\lambda,\kappa}}  \nn \\
&& G_\kappa(w,1,\lambda)= \frac{2+\lambda} { \nu_\kappa\mathcal Z_{\lambda,\kappa}}\,  \Psi_w(\lambda) \qquad \text{for }\: w \ge 2.
\end{eqnarray}
%for $w' \ge 2$.

\subsection{Case $w'\ge 2$}

\begin{enumerate}
\item   {\bf Case $\mathbf {w\ge 2.}$} Denoting by $T_1$ the first hitting time of 1, we have 
%and by $f_{T_1,w}$ starting from $w$ as before then
\begin{eqnarray}
 g_\kappa(w,w',t)&&=\mathbb E_w \Big[\mathbf 1_{T_1>t}\;\mathbf 1_{w(t)=w'}\;\cdot \nu_\kappa^{N(t)}\Big]+\mathbb E_w \Big[\mathbf 1_{T_1\le t}\;\mathbf 1_{w(t)=w'}\;\cdot \nu_\kappa^{N(t)}\Big]\nn\\
 &&=\mathbf E^{\text{IRW}(2)}_w \Big[\mathbf 1_{T_1>t}\;\mathbf 1_{w(t)=w'}\;\cdot \nu_\kappa^{N(t)}\Big]+ \mathbb E_w \bigg[  \mathbf 1_{T_1\le t}\;\mathbb E_w \Big[\mathbf 1_{w(t)=w'}\;\nu_\kappa^{N(t)}\Big| \mathcal F_{T_1}\Big]\bigg]\nn\\
% &&= \mathbb E_w \bigg[  \nu_\kappa^{N(T_1)}\;\mathbb E_1 \Big[\mathbf 1_{w(t-T_1)=0}\;\nu_\kappa^{N(t-T_1)}\Big]\bigg]\nn\\
% &&=\mathbb E_w\[ \mathbb E_w \Big[  \nu_\kappa^{N(T_1)}\;g_\kappa(1,0,t-T_1)\Big|T_1\Big]\]\nn\\
% &&=\mathbb E_w\[g_\kappa(1,0,t-T_1)\, \mathbb E_w \big[  \nu_\kappa^{N(T_1)}\big|T_1\big]\]\nn\\
 && =  \mathbf E^{\text{IRW}(2)}_w \Big[\mathbf 1_{T_1>t}\;\mathbf 1_{w(t)=w'}\;\cdot \nu_\kappa^{N(t)}\Big]+\int_0^t  g_\kappa(1,w',t-s)\, f_{T_1,w}(s)\,   \mathbb E_w \big[  \nu_\kappa^{N(s)}\big|T_1=s\big] \,ds,\nn
%  && =  \int_0^t  \mathbb E_1 \Big[\mathbf 1_{w(t-s)=0}\;\cdot e^{-aL(t-s)}\Big] \,  e^{-2\alpha a s}  \, f_{T_1,w}(s)\, ds
\end{eqnarray}
%\begin{eqnarray}
% &&g_\kappa(w,w',t)= \mathbb E_w \bigg[  \mathbb E_w \Big[\mathbf 1_{w(t)=w'}\;\cdot e^{-aL(t)}\Big| T_1\Big]\bigg]\nn \\
% && =e^{-2\alpha a t}\,\mathbb P_w\(T_1>t, \, w(t)=w'\)+  \int_0^t  \mathbb E_w \Big[\mathbf 1_{w(t)=w'}\;\cdot e^{-aL(t)}\Big| T_1=s\Big] \,  \, f_{T_1,w}(s)\, ds \nn\\
%  && =e^{-2\alpha a t}\,\mathbb P_w\(T_1>t, \, w(t)=w'\)+  \int_0^t  \mathbb E_1 \Big[\mathbf 1_{w(t-s)=w'}\;\cdot e^{-aL(t-s)}\Big] \, e^{-2\alpha a s}\, \, f_{T_1,w}(s)\, ds \nn
%\end{eqnarray}
which is equivalent to
\begin{equation}\label{conv1}
g_\kappa(w,w',t)= \mathbf E^{\text{IRW}(2)}_w \Big[\mathbf 1_{T_1>t}\;\mathbf 1_{w(t)=w'}\;\cdot \nu_\kappa^{N(t)}\Big]+\[(h_0 \cdot f_{T_1,w})*g_\kappa(1,w',\cdot)\](t),
\end{equation}
where $ \mathbf E_w^{\text{IRW}(2)}$ is the expectation w.r. to the probability law of a symmetric  random walk in $\mathbb Z$ with hopping rate $2$, starting at time 0 from $w\ge 2$.
\vskip.2cm
\noindent
From the convolution equation \eqref{conv1} it follows that
\begin{equation}\label{G11}
G_\kappa(w,w',\lambda)=\Phi_{w,w'}(\lambda)+\Psi_{w}(\lambda) \cdot G_\kappa(1,w',\lambda), \qquad \text{for any }\: w\ge 2,
\end{equation}
where
\begin{equation}
\Phi_{w,w'}(\lambda):= \int_0^\infty \mathbf E^{\text{IRW}(2)}_w \Big[\mathbf 1_{T_1>t}\;\mathbf 1_{w(t)=w'}\;\cdot \nu_\kappa^{N(t)}\Big]\, e^{-\lambda t}\, dt.
\end{equation}
\item  {\bf Case $\mathbf {w=1.}$}
We have
\begin{eqnarray*}
g_\kappa(1,w',t)&&= \mathbb E_1 \Big[\mathbf 1_{w(t)=w'}\;\cdot \nu_\kappa^{N(t)}\Big]\nn\\
&&=\mathbb E_1  \bigg[\mathbb E_1 \Big[\mathbf 1_{w(t)=w'}\;\cdot \nu_\kappa^{N(t)}\Big| \,\mathcal F_{T^{\text{ex}}_1}\Big]\bigg]\\
&&=\nu_\kappa\;\mathbb E_1  \bigg[
 g_\kappa(w(T^{\text{ex}}_1),w',t-T^{\text{ex}}_1)\bigg]\\
%&&= \nu_\kappa\;\mathbb E_1  \bigg[ g_\kappa(w(T^{\text{ex}}_1),0,t-T^{\text{ex}}_1)\bigg]%= \nu_\kappa\;\mathbb E_1  \bigg[ \mathbb E_1  \bigg[ g_\kappa(w(T^{\text{ex}}_1),0,t-T^{\text{ex}}_1)\bigg| w(T^{\text{ex}}_1)\bigg]\bigg]
%\\
&&=\nu_\kappa\;\left\{ \frac{\theta+1}{\theta+2}\, \mathbb E_1  \bigg[ g_\kappa(0,w',t-T^{\text{ex}}_1)\bigg] + \frac{1}{\theta+2}
\,\mathbb E_1  \bigg[  g_\kappa(2,w',t-T^{\text{ex}}_1)\bigg] \right\}\\
%&&= \nu_\kappa\;\mathbb E_1 \bigg[ \mathbb E_1 \bigg[ \mathbf 1_{w(T^{\text{ex}}_1)=0}\,g_\kappa(0,0,t-T^{\text{ex}}_1)+\mathbf 1_{w(T^{\text{ex}}_1)=2}\, g_\kappa(2,0,t-T^{\text{ex}}_1)\bigg| \, w(T^{\text{ex}}_1)\bigg]\\
&&=\nu_\kappa\;\int_0^t \left\{(\theta+1)\, g_\kappa(0,w',t-s) +  g_\kappa(2,w',t-s) \right\} \,  e^{-(\theta+2)s}\, ds,
\end{eqnarray*}
%\begin{eqnarray}
%&&g_\kappa(1,w',t)= \mathbb E_1  \bigg[  \mathbb E_1 \Big[\mathbf 1_{w(t)=w'}\;\cdot e^{-aL(t)}\Big| \, T^{\text{ex}}_1\Big]\bigg]\\
%&&=\int_0^t  \mathbb E_1 \Big[\mathbf 1_{w(t)=w'}\;\cdot e^{-aL(t)}\Big| \, T^{\text{ex}}_1=s\Big] \, (\alpha+\gamma) \, e^{-(\alpha+\gamma)s}\, ds
%\end{eqnarray}
%with
%\begin{eqnarray}
% \mathbb E_1 \Big[\mathbf 1_{w(t)=w'}\;\cdot e^{-aL(t)}\Big| \, T^{\text{ex}}_1=s\Big]
% & =&  \frac{\gamma}{\alpha+\gamma}\,\mathbb E_1 \Big[\mathbf 1_{w(t)=w'}\;\cdot e^{-aL(t)}\Big| \, T^{\text{ex}}_1=s, \, w(s)=0\Big]\nn \\
%  &+&  \frac{\alpha}{\alpha+\gamma} \,\mathbb E_1 \Big[\mathbf 1_{w(t)=w'}\;\cdot e^{-aL(t)}\Big| \, T^{\text{ex}}_1=s, \, w(s)=2\Big] \nn
%\end{eqnarray}
%then
%\begin{eqnarray}
%&&g_\kappa(1,w',t)=\gamma\int_0^t  \mathbb E_0 \Big[\mathbf 1_{w(t-s)=w'}\;\cdot e^{-aL(t-s)}\Big] \, e^{-(\alpha+\gamma+2\alpha(\theta+1)a)s}\, ds \nn \\
%&&+\alpha\int_0^t  \mathbb E_2 \Big[\mathbf 1_{w(t-s)=w'}\;\cdot e^{-aL(t-s)}\Big] \, e^{-(\alpha+\gamma+2\alpha(\theta+1)a)s}\, ds \nn \\
%%&&=\int_0^t \[\gamma g(0,0,t-s)+\alpha g(2,0,t-s)\] \, e^{-(\alpha+\gamma+\theta \Gamma_\kappa )s}\, ds \nn
%\end{eqnarray}
i.e.
\begin{equation}
g_\kappa(1,w',t)=(\theta+1) \nu_\kappa [h_1*g_\kappa(0,w',\cdot)](t)+ \nu_\kappa [h_1*g_\kappa(2,w',\cdot)](t).
\end{equation}
Then
\begin{equation}\label{G22}
G_\kappa(1,w',\lambda)=\frac{1}{2+\theta +\lambda}\, \[(\theta+1) \nu_\kappa G_\kappa(0,w',\lambda)+  \nu_\kappa  G_\kappa(2,w',\lambda)\].
\end{equation}
\item  {\bf Case $\mathbf {w=0.}$} Now we have $T^{\text{ex}}_0 \sim \text{Exp}(2)$ then
\begin{eqnarray}
g_\kappa(0,w',t)&&=\mathbb E_0  \bigg[  \mathbb E_0 \Big[\mathbf 1_{w(t)=w'}\;\cdot \nu_\kappa^{N(t)}\Big| \,\mathcal F_{T^{\text{ex}}_0}\Big]\bigg]\nn\\
&&=\nu_\kappa\,\mathbb E_0  \bigg[ \mathbb E_1 \Big[\mathbf 1_{w(t-T^{\text{ex}}_0)=w'}\;\cdot \nu_\kappa^{N(t-T^{\text{ex}}_0)}\Big]\bigg] \nn\\
&& = 2 \nu_\kappa\int_0^t  e^{-2 s}\, g_\kappa(1,w',t-s) \, ds, \nn
%&& = \mathbb P_0  \(T^{\text{ex}}_0>t\)+2 \alpha\int_0^t  e^{-2\alpha (a+1) s}\, \mathbb E_1 \Big[\mathbf 1_{w(t-s)=0}\;\cdot e^{-a L(t-s)}\Big] \, ds
\end{eqnarray}
%\begin{eqnarray}
%&&g_\kappa(0,w',t)= \mathbb E_0  \bigg[  \mathbb E_0 \Big[\mathbf 1_{w(t)=w'}\;\cdot e^{-aL(t)}\Big| \, T^{\text{ex}}_0\Big]\bigg]\nn\\
%&& =  2\alpha\int_0^t  e^{-2\alpha s}\, \mathbb E_0 \Big[\mathbf 1_{w(t)=w'}\;\cdot e^{-aL(t)}\Big| \, T^{\text{ex}}_0=s\Big] \, ds \nn \\
%&& =
%2 \alpha\int_0^t  e^{-2\alpha(a+1) s}\, \mathbb E_1 \Big[\mathbf 1_{w(t-s)=w'}\;\cdot e^{-aL(t-s)}\Big] \, ds
%\end{eqnarray}
namely
\begin{equation}
g_\kappa(0,w',t)=2 \nu_\kappa [h_2*g_\kappa(1,w',\cdot)](t).
\end{equation}
Then
\begin{equation}\label{G33}
G_\kappa(0,w',\lambda)=\frac {2 \nu_\kappa}{2 +\lambda}\,G_\kappa(1,w',\lambda).
\end{equation}

\end{enumerate}
Thus, using \eqref{G11},  \eqref{G22} and  \eqref{G33} we get
\begin{eqnarray}\label{3}
&& G_\kappa(0,w',\lambda)=\frac{2 \nu_\kappa} {\mathcal Z_{\lambda,\kappa}}\, \Phi_{2,w'}(\lambda)\nn \\
&& G_\kappa(1,w',\lambda)=\frac {2+\lambda}  {\mathcal Z_{\lambda,\kappa}}\,\Phi_{2,w'}(\lambda)  \nn \\
&& G_\kappa(w,w',\lambda)=\Phi_{w,w'}(\lambda)+ \frac {2+\lambda}  {\mathcal Z_{\lambda,\kappa}}\,\Phi_{2,w'}(\lambda)   \Psi_w(\lambda) \qquad \text{for }\: w \ge 2,
\end{eqnarray}
for $w' \ge 2$.

\subsection{Computation of $\Psi_w(\lambda)$ and $\Phi_{w,w'}(\lambda)$}
For $x\in \mathbb C$ we define
\begin{equation}\label{zeta0}
\zeta(x):=x-\sqrt{x^2-1}
\end{equation}
notice that  $\zeta(x)\in \mathbb R^+$ for any $|x|\ge 1$ and $\zeta(x)\le 1$ for any $x\in \mathbb R \cap [1,+\infty)$.
\begin{lemma}
For $w\ge 2$ we have
\begin{equation}\label{Psi}
\Psi_w(\lambda)= \zeta_{\lambda,\kappa}^{w-1}\quad \qquad \text{with}\qquad \zeta_{\lambda,\kappa}:=\zeta\(x_{\lambda,\kappa}\), \qquad x_{\lambda,\kappa}:=\frac 1 {\nu_\kappa}\(1+\frac{\lambda}{2}\)
\end{equation}
%where $0\le \zeta_{\lambda,a}\le1$.
\end{lemma}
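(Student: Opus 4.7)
The strategy rests on two ingredients: a strong-Markov factorization over successive integer levels, which reduces $\Psi_w(\lambda)$ to a power of $\Psi_2(\lambda)$, together with the classical probability-generating-function formula for the first hitting time of a simple symmetric random walk, which evaluates $\Psi_2(\lambda)$ explicitly as $\zeta_{\lambda,\kappa}$.

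\textbf{Step 1: Multiplicative factorization.} Recall from \eqref{psi0} that under $\mathbf{E}_w^{\text{IRW}(2)}$ the process $w(t)$ is a symmetric continuous-time random walk on $\mathbb{Z}$ with total jump rate $2$, and $N(t)$ counts its jumps up to time $t$. Since the walk has nearest-neighbour jumps and starts at $w\ge 2$, in order to hit $1$ it must pass through $w-1,\,w-2,\ldots,2$ in this order. Denoting by $T_{k}$ the first hitting time of $k$, we may write $T_1=\sum_{k=1}^{w-1}(T_{w-k}-T_{w-k+1})$ and $N(T_1)=\sum_{k=1}^{w-1}\bigl(N(T_{w-k})-N(T_{w-k+1})\bigr)$. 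By the strong Markov property applied at each $T_{w-k+1}$ and translation invariance of the walk, the pairs $\bigl(T_{w-k}-T_{w-k+1},\,N(T_{w-k})-N(T_{w-k+1})\bigr)$ are i.i.d., each distributed as $(T_1,N(T_1))$ under $\mathbf{E}_2^{\text{IRW}(2)}$. Since $\nu_\kappa^{N(T_1)}e^{-\lambda T_1}$ factorizes multiplicatively, this gives
\begin{equation*}
\Psi_w(\lambda)=\Psi_2(\lambda)^{\,w-1}\qquad(w\ge 2),
\end{equation*}
so it only remains to show $\Psi_2(\lambda)=\zeta_{\lambda,\kappa}$.

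\textbf{Step 2: Computing $\Psi_2(\lambda)$.} Poisson-thinning decomposes the continuous-time walk into a Poisson clock of rate $2$ with i.i.d. $\mathrm{Exp}(2)$ inter-arrival times $e_j$, together with a discrete-time simple symmetric random walk $\{S_n\}$ for the positions. Under this decomposition $N(T_1)$ is the first hitting time $\tau$ of $1$ by $\{S_n\}$ started at $2$, and $T_1=\sum_{j=1}^{\tau}e_j$ with the $e_j$ independent of $\tau$. Hence
\begin{equation*}
\Psi_2(\lambda)=\mathbf{E}\!\left[\nu_\kappa^{\,\tau}\,\mathbf{E}\!\left[e^{-\lambda e_1}\right]^{\tau}\right]=\mathbf{E}\!\left[z^{\,\tau}\right],\qquad z:=\frac{2\nu_\kappa}{2+\lambda}=\frac{1}{x_{\lambda,\kappa}}.
\end{equation*}
By translation the law of $\tau$ under $\mathbf{P}_2$ equals that of the first hitting time of $0$ from $1$ of a discrete simple symmetric random walk, whose generating function satisfies the quadratic $\phi(z)=\tfrac{z}{2}+\tfrac{z}{2}\phi(z)^2$ obtained by conditioning on the first step; solving and selecting the branch with $\phi(0)=0$ yields $\phi(z)=(1-\sqrt{1-z^2})/z$. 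Substituting $z=1/x_{\lambda,\kappa}$,
\begin{equation*}
\Psi_2(\lambda)=x_{\lambda,\kappa}\Bigl(1-\sqrt{1-x_{\lambda,\kappa}^{-2}}\Bigr)=x_{\lambda,\kappa}-\sqrt{x_{\lambda,\kappa}^2-1}=\zeta(x_{\lambda,\kappa})=\zeta_{\lambda,\kappa},
\end{equation*}
as required. Combined with Step~1 this proves $\Psi_w(\lambda)=\zeta_{\lambda,\kappa}^{\,w-1}$.

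\textbf{Main obstacle.} The computation is essentially routine once the decomposition is set up correctly; the only subtle point is verifying that the power series identity $\phi(z)=(1-\sqrt{1-z^2})/z$, which is derived for real $z\in(0,1)$, extends (by analytic continuation in $\lambda$) to the complex value $z=1/x_{\lambda,\kappa}$, and that one selects the correct branch of $\zeta$ defined in \eqref{zeta0}. This is guaranteed because $|z|\le 1$ when $\mathrm{Re}\,\lambda\ge 0$, keeping $\phi(z)=\mathbf{E}[z^\tau]$ convergent, and because the branch $\zeta(x)=x-\sqrt{x^2-1}$ is precisely the one with $|\zeta(x)|\le 1$ on the relevant domain, ensuring integrability in the Laplace transforms that follow.
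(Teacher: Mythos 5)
Your proof is correct and follows essentially the same route as the paper: both arguments reduce $\Psi_w(\lambda)$ to $\mathbf{E}_w\bigl[\bigl(\tfrac{2\nu_\kappa}{2+\lambda}\bigr)^{S_1}\bigr]$ by integrating out the i.i.d.\ $\mathrm{Exp}(2)$ waiting times conditionally on the embedded discrete walk, and then invoke the classical first-passage generating function $\zeta(x)=x-\sqrt{x^2-1}$ raised to the power $w-1$. The only difference is presentational: you make the level-by-level strong-Markov factorization and the quadratic for $\phi(z)$ explicit, whereas the paper quotes the $(w-1)$-th power formula directly; your closing remark on the branch choice and $|z|\le 1$ is a welcome extra precision.
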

\begin{proof}
Let $\mathbf E_w^{\text{IRW}(2)}$ be the expectation with respect to a symmetric random walk on $\mathbb Z$ with hopping rate $2$, and let $S_1$ the first hitting time of 1 of the embedded discrete time random walk and denote by $\{X_i\}_{i \in \mathbb N}$ a sequence of independent  exponential random  variables of parameter $2$. Then
\begin{eqnarray}
&&\Psi_w(\lambda)=\mathbf E^{\text{IRW}(2)}_w\[e^{-\lambda T_1}\, \nu_\kappa^{N(T_1)}\] = \mathbf E_w\[ \mathbf E_w\[e^{-\lambda T_1}\, \nu_\kappa^{S_1}\Big| S_1\]\]\nn\\
&&=\sum_{n=1}^\infty \nu_\kappa^n\, \mathbf P_w\(S_1=n\)\,  \mathbf E_w\[e^{-\lambda T_1}\Big| S_1=n\]\nn\\
&&=\sum_{n=1}^\infty  \nu_\kappa^n\, \mathbf P_w\(S_1=n\)\,  \mathbf E_w\[e^{-\lambda (X_1+\ldots+X_n)}\]\nn\\
&&=\sum_{n=1}^\infty   \nu_\kappa^n\, \mathbf P_w\(S_1=n\)\,  \(\frac{2}{2+\lambda}\)^n\nn \\
&&=\mathbf E_w\[\(\frac{2 \nu_\kappa}{2+\lambda}\)^{S_1}\]=\(\zeta\(\frac{2+\lambda}{2 \nu_\kappa}\)\)^{w-1},
\end{eqnarray}
with $\zeta(x)$ as in \eqref{zeta0}.
\end{proof}

\begin{lemma}
For $w,w'\ge 2$ we have
\begin{eqnarray}\label{Phi}
\Phi_{w,w'}(\lambda)= \frac{\zeta_{\lambda,\kappa}^{|w'-w|}- \zeta_{\lambda,\kappa}^{w'+w-2}}{\nu_\kappa\(\zeta_{\lambda,\kappa}^{-1}-\zeta_{\lambda,\kappa}\)}
\end{eqnarray}
with $\zeta_{\lambda,\kappa}$ as in \eqref{Psi}.
\end{lemma}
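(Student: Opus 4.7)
The strategy is parallel to the one just used for $\Psi_w(\lambda)$: we condition on the Poisson clock to reduce the computation to the embedded discrete-time symmetric random walk, and then we invoke the reflection principle across the barrier $\{1\}$ to incorporate the constraint $T_1>t$. Under $\mathbf{P}^{\text{IRW}(2)}_w$ the distance process is a symmetric simple random walk with jump rate $2$, so $N(t)$ is Poisson$(2t)$ and is independent of the embedded discrete walk $(S_k)_{k\ge 0}$ started at $w$. Writing $\tilde T_1$ for the first hitting time of $1$ by $(S_k)_k$ and using that the walk is nearest-neighbor (so that $\{T_1>t\}$ coincides with $\{\tilde T_1>N(t)\}$), one obtains, after interchanging sum and integral,
\begin{equation*}
\Phi_{w,w'}(\lambda)=\frac{1}{2+\lambda}\sum_{n\ge 0}z^{n}\,\mathbf{P}_w(S_n=w',\,\tilde T_1>n),\qquad z:=\frac{2\nu_\kappa}{2+\lambda}=\frac{1}{x_{\lambda,\kappa}}.
\end{equation*}

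Next, for $w,w'\ge 2$ the classical reflection principle across the point $1$ (reflecting the path up to its first visit to $1$, and using $p_n(2-w,w')=p_n(w,2-w')$) gives
\begin{equation*}
\mathbf{P}_w(S_n=w',\,\tilde T_1>n)=p_n(w,w')-p_n(w,2-w'),
\end{equation*}
where $p_n$ denotes the $n$-step kernel of the simple symmetric random walk. The generating function $G_z(a,b):=\sum_n z^n p_n(a,b)$ satisfies $G_z(a,b)=\zeta_{\lambda,\kappa}^{|a-b|}/\sqrt{1-z^2}$, the factor $\zeta_{\lambda,\kappa}$ being the first-passage generating function of the discrete SRW evaluated at $z=1/x_{\lambda,\kappa}$, precisely as computed in the proof of the previous lemma. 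Since $|w-(2-w')|=w+w'-2$ for $w,w'\ge 2$, this yields
\begin{equation*}
\Phi_{w,w'}(\lambda)=\frac{\zeta_{\lambda,\kappa}^{|w'-w|}-\zeta_{\lambda,\kappa}^{w+w'-2}}{(2+\lambda)\sqrt{1-z^2}}.
\end{equation*}

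A short algebraic simplification matches the denominator to the one claimed: the identities $\zeta_{\lambda,\kappa}+\zeta_{\lambda,\kappa}^{-1}=2x_{\lambda,\kappa}$ and $\zeta_{\lambda,\kappa}\cdot\zeta_{\lambda,\kappa}^{-1}=1$ yield $\zeta_{\lambda,\kappa}^{-1}-\zeta_{\lambda,\kappa}=2\sqrt{x_{\lambda,\kappa}^{2}-1}$, and substituting $x_{\lambda,\kappa}=(2+\lambda)/(2\nu_\kappa)$ gives $(2+\lambda)\sqrt{1-z^2}=2\nu_\kappa\sqrt{x_{\lambda,\kappa}^{2}-1}=\nu_\kappa(\zeta_{\lambda,\kappa}^{-1}-\zeta_{\lambda,\kappa})$, which is the denominator in \eqref{Phi}. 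The main obstacle I expect is the reflection-principle bookkeeping: one must take the reflection across the barrier $1$ (not $0$), so that the starting point maps to $2-w$ and the exponent $w+w'-2$ appears with the correct sign, and one must verify that the constraint $\{T_1>t\}$ on the continuous-time path is equivalent to the constraint $\{\tilde T_1>N(t)\}$ on the embedded discrete walk. Once this is settled, the remainder is a short generating-function computation essentially identical in structure to the one carried out for $\Psi_w$.
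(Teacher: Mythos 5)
Your proposal is correct and follows essentially the same route as the paper: reduce to the embedded discrete-time symmetric walk via the Poisson clock (yielding the generating variable $z=2\nu_\kappa/(2+\lambda)=1/x_{\lambda,\kappa}$), apply the reflection principle across the barrier $1$ to get $\mathbf p_w(S_n=w',\tilde T_1>n)=p_n(w,w')-p_n(w,2-w')$, and sum the resulting SRW Green's function generating functions, with the same final algebraic identification $(2+\lambda)\sqrt{1-z^2}=\nu_\kappa(\zeta_{\lambda,\kappa}^{-1}-\zeta_{\lambda,\kappa})$. The only cosmetic difference is the order of operations (you take the Laplace transform before reflecting, the paper reflects first and writes out the binomial coefficients explicitly), which changes nothing of substance.
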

\begin{proof}
By definition
\begin{equation}
\Phi_{w,w'}(\lambda):= \int_0^\infty \mathbf E^{\text{IRW}(2)}_w \Big[\mathbf 1_{T_1>t}\;\mathbf 1_{w(t)=w'}\;\cdot \nu_\kappa^{N(t)}\Big]\, e^{-\lambda t}\, dt
\end{equation}
\begin{eqnarray}
 \mathbf E^{\text{IRW}(2)}_w \Big[\mathbf 1_{T_1>t}\;\mathbf 1_{w(t)=w'}\;\cdot \nu_\kappa^{N(t)}\Big]=\sum_{n=0}^\infty \nu_\kappa^n\; \mathbf p_w\(S_1>n, \, w_n=w'\) \; P(N(t)=n)
\end{eqnarray}
%where
%\begin{eqnarray}
%&&\mathbf P^{\text{IRW}(2\alpha)}_w\(T_1>t, \, w(t)=w'\)=\mathbf E_w\[ \mathbf P^{\text{IRW}(2\alpha)}_w\(T_1>t, \, w(t)=w'\Big| \, N(t)\)\]\nn \\
%&&=\sum_{n=0}^\infty \mathbf p_w\(S_1>n, \, w_n=w'\) \; P(N(t)=n)
%\end{eqnarray}
where now $\mathbf p_w$ is the probability law of the embedded symmetric random walk on $\mathbb Z$ starting from $w\ge 2$, and $S_1$ is the related first hitting time of 1. Moreover $N(t)$ is the Poisson process of parameter $2$. From the reflection principle for the symmetric random walk we have that
\begin{eqnarray}
&&\mathbf p_w\(S_1\le n, \, w_n=w'\) =\mathbf p_0\(S_{1-w}\le n, \, w_n=w'-w\) \nn\\
&& =\mathbf p_0\(w_n=2-(w+w')\) =\mathbf p_0\(w_n=(w+w')-2\)
\end{eqnarray}
where, for $b\ge 0$,
\begin{equation}\label{Bern}
\mathbf p_0(w_n=b)=
\frac 1 {2^n} \, \binom{n}{(n+b)/2} \quad  \text{if} \:\; n\ge b\quad \text{and} \quad n+b \: \:\text{is even}\\
\end{equation}
and it is 0 otherwise. Then
\begin{eqnarray}
&& \mathbf E^{\text{IRW}(2)}_w \Big[\mathbf 1_{T_1>t}\;\mathbf 1_{w(t)=w'}\;\cdot \nu_\kappa^{N(t)}\Big]=e^{-2 t}\sum_{n=0}^\infty \;\frac{(2 \nu_\kappa t)^n}{n!}\;\left\{\mathbf p_0\(w_n=w'-w\)- \mathbf p_0\(w_n=w'+w-2\) \right\} \nn
\end{eqnarray}
hence, from \eqref{Bern} it follows
\begin{eqnarray}
&&\Phi_{w,w'}(\lambda)= \sum_{n=0}^\infty \frac{(2 \nu_\kappa)^n}{n!}\;\left\{\mathbf p_0\(w_n=w'-w\)-\mathbf p_0\(w_n=w'+w-2\) \right\}  \int_0^\infty t^n\,  e^{-(2 +\lambda)t}  \, dt \nn \\
&&=  \sum_{n=0}^\infty \frac{(2 \nu_\kappa)^n}{(2 +\lambda)^{n+1}}\;\left\{\mathbf p_0\(w_n=w'-w\)-\mathbf p_0\(w_n=w'+w-2\) \right\} \nn \\
&&= \frac 1 {2 +\lambda} \;\left\{f(w'-w)-f(w'+w-2)\right\}
\end{eqnarray}
with, for  $b\in \mathbb Z$,
\begin{eqnarray}
f(b)&:=& \sum_{n=0}^\infty \(\frac{2 \nu_\kappa}{2+\lambda}\)^n\;\mathbf p_0\(w_n=b\)= \frac{2+\lambda}{\sqrt{(2+\lambda)^2-4\nu_\kappa^2}}\, \(\zeta\(\frac{2 +\lambda}{2 \nu_\kappa}\)\)^{|b|}\nn
\end{eqnarray}
and $\zeta(x)$ as in \eqref{zeta0}.
\end{proof}

\subsection{Conclusion of the proof of Theorem \ref{Teo:Green}}
From \eqref{1},  \eqref{2} and  \eqref{3} it follows that
\begin{eqnarray}
&& G_\kappa(0,0,\lambda)=\frac{\theta \nu_\kappa^{-1}+ \zeta_{\lambda,\kappa}^{-1}} {\mathcal Z_{\lambda,\kappa}}\,\nn \\
%&& G_\kappa(1,0,\lambda)=\frac {\nu_\kappa \gamma} {\mathcal Z_{\lambda}}\,  \nn \\
&& G_\kappa(w,0,\lambda)=\frac{\theta+1} {\mathcal Z_{\lambda,\kappa}}\,  \zeta_{\lambda,\kappa}^{w-1}\qquad \text{for }\: w \ge 1\nn
\\
%&& G_\kappa(0,1,\lambda)=\frac{2\alpha \nu_\kappa} {\mathcal Z_{\lambda,\kappa}}\nn \\
%&& G_\kappa(1,1,\lambda)=\frac {2\alpha+\lambda}  {\mathcal Z_{\lambda}}  \nn \\
%&& G_\kappa(w,1,\lambda)= \frac{2\alpha+\lambda} {\mathcal Z_{\lambda}}\,  \zeta_{\lambda,\kappa}^{w-1}\qquad \text{for }\: w \ge 1 \nn \\
%%%
&& G_\kappa(0,w',\lambda)=\frac{2} {\mathcal Z_{\lambda,\kappa}}\, \zeta_{\lambda,\kappa}^{w'-1} \qquad \text{for }\: w' \ge 1 \nn \\
%&& G_\kappa(1,w',\lambda)=\frac { 2\alpha+\lambda}  {\mathcal Z_{\lambda}}\, \zeta_{\lambda,\kappa}^{w'-1} \qquad \text{for }\: w' \ge 2  \nn \\
&& G_\kappa(w,w',\lambda)= \frac{\zeta_{\lambda,\kappa}^{|w'-w|}- \zeta_{\lambda,\kappa}^{w'+w-2}}{\nu_\kappa\(\zeta_{\lambda,\kappa}^{-1}-\zeta_{\lambda,\kappa}\)}+ \frac {2 x_{\lambda,\kappa}}  {\mathcal Z_{\lambda,\kappa}}\, \zeta_{\lambda,\kappa}^{w'+w-2} \qquad \text{for }\: w,w' \ge 1
\end{eqnarray}
%
%
%\begin{eqnarray}
%&&g_\kappa(0,0,\lambda)=\frac{ 2\theta+1 +\mathcal Z_{\lambda,a}} {2\alpha x_{\lambda,a}\,\mathcal Z_{\lambda,a}}\,\nn \\
%&& g_\kappa(w,0,\lambda)=\frac{2\theta+1} {2\alpha\mathcal Z_{\lambda,a}}\,  \zeta_{\lambda,a}^{w-1} \qquad \text{for }\: w \ge 1\nn  \\
%&&g_\kappa(0,1,\lambda)=\frac{1} {\alpha\mathcal Z_{\lambda,a}}\nn \\
%&& g_\kappa(w,1,\lambda)= \frac{x_{\lambda,a}} {\alpha\mathcal Z_{\lambda,a}}\,  \zeta_{\lambda,a}^{w-1}\qquad \text{for }\: w \ge 1  \nn\\
%&&g_\kappa(0,w',\lambda)=\frac{1} {\alpha\mathcal Z_{\lambda,a}}\, \,  \zeta_{\lambda,a}^{w'-1},\qquad w'\ge 2 \nn \\
%&& g_\kappa(1,w',\lambda)=\frac {x_{\lambda,a}}  {\alpha\mathcal Z_{\lambda,a}}\,  \zeta_{\lambda,a}^{w'-1}, \qquad w' \ge 2 \nn \\
%&& g_\kappa(w,w',\lambda)= \frac{ \zeta_{\lambda,a}^{|w'-w|}-\zeta_{\lambda,a}^{w'+w-2}}{\alpha (\zeta_{\lambda,a}^{-1}-\zeta_{\lambda,a})}+ \frac{x_{\lambda,a}} {\alpha\mathcal Z_{\lambda,a}}\,\zeta_{\lambda,a}^{w+w'-2},\qquad \text{for }\: w \ge 2, w'\ge 2 \nn
%\end{eqnarray}
with
\begin{equation}\label{zeta2}
 \zeta_{\lambda,\kappa}:=\zeta\(x_{\lambda,\kappa}\)=x_{\lambda,\kappa}-\sqrt{x_{\lambda,\kappa}^2-1}, \qquad x_{\lambda,\kappa}:=\frac 1 {\nu_\kappa}\(1+\frac \lambda {2}\)
\end{equation}
and
\begin{eqnarray}
\mathcal Z_{\lambda,\kappa}&=&\frac 1{\nu_\kappa}\left\{(2+\lambda)(2+\theta+\lambda- \nu_\kappa \Psi_2(\lambda))-2(\theta+1) \nu_\kappa^2\right\}\nn \\
&=& \(\nu_\kappa(\zeta_{\lambda,\kappa}^{-2}-1)+2\theta \(x_{\lambda,\kappa}-\nu_\kappa\)\)\nn \\
&=& \mathcal Z_{\lambda,\kappa}^{(0)}+2\theta  (x_{\lambda,\kappa}-\nu_\kappa), \qquad  \mathcal Z_{\lambda,\kappa}^{(0)}:= \nu_\kappa (\zeta_{\lambda,\kappa}^{-2}-1)
\end{eqnarray}
Notice that, for $w,w'\ge 1$
\begin{eqnarray}
G_\kappa(w,w',\lambda)&=
%& \frac 1 {\alpha (\zeta_{\lambda,a}^{-1}-\zeta_{\lambda,a})}
%\left\{ \zeta_{\lambda,a}^{|w'-w|}+\zeta_{\lambda,a}^{w'+w} \cdot \frac{\mathcal Z^{(0)}_a (\lambda)}{\mathcal Z_{\lambda,a}}\left(1-\frac{2(\mathcal Z_{\lambda,a}-\mathcal Z^{(0)}_a (\lambda))}{1-\zeta_{\lambda,a}^2}\right)\right\} \nn \\
%&=
& \frac {1} {\mathcal Z^{(0)}_{\lambda,\kappa}}
\left\{ \zeta_{\lambda,\kappa}^{|w'-w|-1}+\zeta_{\lambda,\kappa}^{w'+w-1} \cdot
\left( 2\,  \frac{\mathcal Z_{\lambda,\kappa}^{(0)} }{\mathcal Z_{\lambda,\kappa}}-1 \right)\right\} \nn
%&=& \frac 1 {2\alpha \mathcal Z_{\lambda,a}}
%\left\{ \zeta_{\lambda,a}^{|w'-w|-1}\cdot  \frac{\mathcal Z_{\lambda,a}}{\mathcal Z^{(0)}_a (\lambda)} +\zeta_{\lambda,a}^{w'+w-1} \cdot
%\left(1 +\zeta_{\lambda,a}^{-2}\(1-\frac{\mathcal Z_{\lambda,a}}{\mathcal Z^{(0)}_a (\lambda)} \)  \right)\right\} \nn
\end{eqnarray}
%moreover we have
%\begin{equation}
%g_\kappa(0,0,\lambda)=\frac{ 2\theta+1 +\mathcal Z_{\lambda,a}} {2\alpha x_{\lambda,a}\,\mathcal Z_{\lambda,a}}=\frac{2\theta (a+1) + \xi_{\lambda,a}^{-1}}{2\alpha \mathcal Z_{\lambda,a}}
%\end{equation}
thus
 \begin{eqnarray}
&& G_\kappa(0,0,\lambda)=\frac{\theta \nu_\kappa^{-1}+  \zeta_{\lambda,\kappa}^{-1}} {\mathcal Z_{\lambda,\kappa}}\,\nn \\
%&& G_\kappa(1,0,\lambda)=\frac {\nu_\kappa \gamma} {\mathcal Z_{\lambda}}\,  \nn \\
&& G_\kappa(w,0,\lambda)=\frac{ \theta +1} {\mathcal Z_{\lambda,\kappa}}\,  \zeta_{\lambda,\kappa}^{w-1}\qquad \text{for }\: w \ge 1\nn
\\
%&& G_\kappa(0,1,\lambda)=\frac{1} {x_{\lambda, \kappa}\mathcal Z_{\lambda,\kappa}}\nn \\
%&& G_\kappa(1,1,\lambda)=\frac {2\alpha+\lambda}  {\mathcal Z_{\lambda}}  \nn \\
%&& G_\kappa(w,1,\lambda)= \frac{2\alpha+\lambda} {\mathcal Z_{\lambda}}\,  \zeta_{\lambda,\kappa}^{w-1}\qquad \text{for }\: w \ge 1 \nn \\
%%%
%&& G_\kappa(0,w',\lambda)=\frac{2\alpha \nu_\kappa} {\mathcal Z_{\lambda,\kappa}}\, \zeta_{\lambda,\kappa}^{w'-1} \qquad \text{for }\: w' \ge 1 \nn \\
%&& G_\kappa(1,w',\lambda)=\frac { 2\alpha+\lambda}  {\mathcal Z_{\lambda}}\, \zeta_{\lambda,\kappa}^{w'-1} \qquad \text{for }\: w' \ge 2  \nn \\
&& G_\kappa(w,w',\lambda)= \frac {1} {\mathcal Z^{(0)}_{\lambda,\kappa}}
\left\{ \zeta_{\lambda,\kappa}^{|w'-w|-1}+\zeta_{\lambda,\kappa}^{w'+w-1} \cdot
\left( 2\,  \frac{\mathcal Z_{\lambda,\kappa}^{(0)} }{\mathcal Z_{\lambda,\kappa}}-1 \right)\right\}  \qquad \text{for }\: w\ge 0 ,w' \ge 1.\nn
\end{eqnarray}
This finishes the proof of the Theorem \ref{Teo:Green}.
\qed

\subsection{Proof of Corollary \ref{for}}

Notice that by the definition of the coordinates of the leftmost and rightmost particle one has
$w=y-x$ and  $u=x+y$ so that, as a consequence of \eqref{GL} one obtains
\begin{equation}
\label{prima}
\Pi^{(\theta)}((x,y),(x',y');\lambda)=\mathcal{G}^{(\theta)}((x+y,y-x),(x'+y',y'-x');\lambda)\;.
\end{equation}
To obtain an explicit expression we use  translation invariance 
\begin{equation}
\mathcal G((u,w);(u',w');\lambda)=\mathcal G((0,w);(u'-u,w');\lambda)
\end{equation}
and we rewrite $\mathcal G^{(\theta)}$ as follows
\be
\label{seconda}
\mathcal G^{(\theta)}((u,w);(u',w');\lambda)=\frac 1 {2\pi}\int_{-\pi}^{\pi} G^{(\theta)}(w,w',\kappa,\lambda) e^{i\kappa (u'-u)}\, d\kappa,
\ee
where on the right hand side we can insert the expression for $G^{(\theta)}$ that appears in Theorem \ref{Teo:Green}.
In doing so it is also useful to write an integral representation for the terms $\zeta_{\lambda,\kappa}^{|w'-w|}$ and $\zeta_{\lambda,\kappa}^{w'+w}$
in Theorem \ref{Teo:Green}. This can be obtained by noticing that for any $\zeta$ with $|\zeta|<1$ one has
\begin{eqnarray}
\sum_{x=0}^\infty \zeta^x \cos{(mx)}=\frac {1-\zeta\cos(m)}{1+\zeta^2 -2\zeta\cos(m) }
\end{eqnarray}
so that
\begin{eqnarray}
\sum_{x\in \mathbb Z} \zeta^{|x|}e^{-imx}= \frac{1-\zeta^2}{1+\zeta^2-2\zeta\cos(m)}\;.
\end{eqnarray}
Thus we have
\begin{eqnarray}\label{terza}
\zeta^{|x|}
%&=&\frac 1{2\pi}\int_{-\pi}^\pi e^{- imx}\, \frac{1-\zeta^2}{1+\zeta^2-2\zeta \cos(m)}\, dm\nn \\
&=& \frac 1{2\pi}\int_{-\pi}^\pi e^{ imx}\, \frac{\zeta^{-1}-\zeta}{\zeta^{-1}+\zeta-2 \cos(m)}\, dm\;.
\end{eqnarray}
We are indeed allowed to use this expression since $|\zeta_{\lambda,\kappa}|\le 1$, as it can
immediately be seen from \eqref{zeta} observing that  $x_{\lambda,\kappa}\ge 1$ for all $\alpha, \lambda\ge 0$.
All in all, combining together Eq. \eqref{seconda}, Theorem \ref{Teo:Green} and Eq. \eqref{terza}, one arrives to
%Moreover
%$$\zeta_{\lambda,\kappa}+\zeta_{\lambda,\kappa}^{-1}=2x_{\lambda,\kappa}=\frac 2 {\nu_\kappa}\(1+\frac \lambda {2}\).$$
%Hence we have the identities
%\begin{eqnarray*}
% \frac {\zeta_{\lambda,\kappa }^{|w'-w|}} { \zeta_{\lambda,\kappa }^{-1}-\zeta_{\lambda,\kappa }}
%=\frac 1{2\pi}\, \cdot \,\int_{-\pi}^\pi \,\frac{e^{im(w'-w)}}{ \zeta_{\lambda,\kappa}^{-1}+\zeta_{\lambda,\kappa}-2\cos(m)}\, dm=\frac 1{4\pi}\, \cdot \,\int_{-\pi}^\pi \,\frac{e^{im(w'-w)}}{\frac 1 {\nu_\kappa}\(1+\frac \lambda {2}\)-\cos(m)}\, dm
%\end{eqnarray*}
%\begin{eqnarray*}
% \frac {\zeta_{\lambda,\kappa }^{w'+w}} { \zeta_{\lambda,\kappa }^{-1}-\zeta_{\lambda,\kappa }}
%=\frac 1{2\pi}\, \cdot \,\int_{-\pi}^\pi \,\frac{ e^{im(w'+w)}}{ \zeta_{\lambda,\kappa}^{-1}+\zeta_{\lambda,\kappa}-2\cos(m)}\, dm=\frac 1{4\pi}\, \cdot \,\int_{-\pi}^\pi \,\frac{e^{im(w'+w)}}{\frac 1 {\nu_\kappa}\(1+\frac \lambda {2}\)-\cos(m)}\, dm
%\end{eqnarray*}
%and thus, using \eqref{G-main} and the formula for $\mathcal{Z}^{(0)}_{\lambda,\kappa}$ in  \eqref{zeta},
%we can rewrite $\mathcal G$ as follows
 \begin{eqnarray}
 \label{platone}
 &&\mathcal G^{(\theta)}((u,w);(u',w');\lambda)=
 %\frac 1 {2\pi}\int_{-\pi}^{\pi} e^{i\kappa (u'-u)}G(w,w',\kappa,\lambda)\, d\kappa
 \\\nn
\\
&&=
\frac {1}{8\pi^2}\int_{-\pi}^\pi \, \int_{-\pi}^\pi \, \frac{f_{\lambda,\kappa}^{(\theta)}(w,w')\, e^{i\kappa (u'-u)}}{\(1+\frac \lambda{2}\)- \nu_\kappa\cos(m)} \(e^{ im(w'-w)} + e^{ im(w'+w)} \cdot
\left( 2\,  \frac{\mathcal Z_{\lambda,\kappa}^{(0)} }{\mathcal Z^{(\theta)}_{\lambda,\kappa}}-1 \right)\) \, dm\, d\kappa \;.\nn
%&&=
%\frac {1}{2\alpha(2\pi)^2}\int_{-\pi}^\pi \, \int_{-\pi}^\pi \, \frac{f_{\lambda,\kappa}^\theta(w,w')\, e^{i[\kappa (u'-u)+m(w'-w)]}}{\(1+\frac \lambda{2\alpha}\)- \nu_\kappa\cos(m)} \(1 + e^{ i2mw} \cdot
%\left( 2\,  \frac{\mathcal Z_{\lambda,\kappa}^{(0)} }{\mathcal Z^{(\theta)}_{\lambda,\kappa}}-1 \right)\) \, dm\, d\kappa \nn
\end{eqnarray}
This expression can now be used  in Eq. \eqref{prima}. Defining \begin{eqnarray}
&&\kappa_1=\kappa-m, \quad \kappa_2= \kappa+m,
\end{eqnarray}
and considering the case  $x\neq y$ one obtains \eqref{left-right-formula}.
\qed

\section{Scaling limits: proof of Theorem \ref{scalingthm}}

% \beq
%&& \frac {1}{2\pi}\int_{-\infty}^{\infty} \, \frac{\int_0^{\infty} e^{ i (\bar m -m)y}\, dy}{\lambda +\bar m^2} \,
%\( e^{- iz\bar m} -  e^{ iz\bar m} \)\,   d\bar m + \frac{1}{\lambda} e^{-\sqrt{\lambda} |x|}
%%-\frac {1}{2\pi}\int_{-\infty}^{\infty} \, \frac{\int_0^{\infty} e^{ i\bar my}\, dy}{\lambda +\kappa^2+\bar m^2} \,
%%\( e^{- iz\bar m} -  e^{ iz\bar m} \)\,   d\bar m \nn\\
%%  &=& \frac {1}{2\pi}\int_{-\infty}^{\infty} \, \frac{\frac {1}{i(m-\bar m)}+\pi \delta_m(\bar m)}{\lambda +\kappa^2+\bar m^2} \,
%%\( e^{- iz\bar m} +  e^{ iz\bar m} \)\,   d\bar m
%%\nn\\
%%& =& \frac{1}{\lambda +\kappa^2+ m^2} \,- \frac {i}{2\pi}\int_{-\infty}^{\infty} \, \frac{e^{- iz\bar m} +  e^{ iz\bar m}}{ (m-\bar m)\(\lambda +\kappa^2+\bar m^2\)}\;  d\bar m
%%  \nn
%\eeq

\col{This section is organised as follows: we first prove several results on the limiting process for the properly rescaled joint process of distance and sum coordinates,
and then we use these computations to show that the candidate scaling limit is the ``correct scaling limit''.}

\col{
Specifically, in the preliminary section \ref{preliminary-sect}  we compute the Fourier-Laplace transform of the probability density of the standard sticky Brownian motion.
In section \ref{joint-section} we obtain the Fourier-Laplace transform of the transition density of the candidate scaling limit, i.e. a system of two Brownian motions with a 
sticky interaction defined via their local intersection time. Further, in section \ref{limiting} we show that the Fourier-Laplace transform for a generic
value of the stickiness parameter can be written as a convex combination of the  Fourier-Laplace transforms of two
limiting cases, i.e. absorbing and reflecting. 
With these results in our hand, we then continue by proving in section \ref{scaling-section} convergence of the appropriately rescaled discrete two-particle process to 
the system of sticky Brownian motions (Theorem \ref{scalingthm}). The convergence in distribution is inferred from the convergence of the Fourier-Laplace transforms.  
Finally section \ref{local-sect} contains the proof of Corollary \ref{local0}, that deals with the convergence of the expected local time.}

\subsection{\col{Standard} Brownian motion sticky at the origin}
\label{preliminary-sect}
\col{We start with a preliminary computation that involves just a single sticky Brownian motion
(which is indeed the scaling limit of the distance process).
We recall the definition of the sticky Brownian motion, 
see \cite{ks, pot} for more background on such process.
For all $t\ge 0$,} let $L(t)$ be the local time at the origin of a standard Brownian motion $B(t)$  and let $\gamma>0$.
Set
\be
\label{s-transform}
s^{-1}(t) = t + \gamma L(t).
\ee
The (one-sided) \col{standard} sticky Brownian motion $B^S(t)$ on $\mathbb{R}_+$ with sticky boundary at the origin
and stickiness parameter $\gamma>0$ is
defined as the time changed \col{standard} reflected Brownian motion, i.e.
\be
\label{bbb}
B^S(t) = |B(s(t))|.
\ee
Using  the expression for the joint density of $(|B(t)|, L(t))$ (formula (3.14) in \cite{pot})
\be\label{dens}
\mathbb{P}_0(|B(r)| \in dx, L(r)\in dy) = 2 \frac{x+y}{\sqrt{2\pi r^3}} e^{-\frac{(x+y)^2}{2r}} dx dy \qquad x,y \ge 0,
\ee
\col{one can}  compute the Fourier-Laplace transform of $B^S(t)$ \col{that is defined as}
\be
\label{psi-single}
\psi_{\col{0}}^{S}(m,\lambda,\gamma):=\int_0^\infty\E_{0} \left[e^{-im B^S(t)}\right] e^{-\lambda t} dt,
\ee
\col{where the subscript $0$ denotes the initial position of $B^S(t)$.}

\bl[\col{Fourier-Laplace transform of standard sticky Brownian motion}]
\label{lemma-preliminary}
We have
\be\label{sticky}
\psi_{\col{0}}^{S}(m,\lambda,\gamma) = \frac{\col{\sqrt{2}}}{\sqrt{\lambda} + \frac{\gamma}{\sqrt{2}}\lambda}
\int_{0}^{\infty} e^{-\sqrt{2\lambda}x - imx}dx
+ \frac{\frac{\gamma}{\sqrt{2}}}{\sqrt{\lambda} + \frac{\gamma}{\sqrt{2}}\lambda}.
\ee
\el
\bpr
\col{We  rewrite \eqref{psi-single} using \eqref{bbb}  and then} apply the change of variable $s(t) = r$ to obtain
\beq
\psi^{S}_0(m,\lambda,\gamma) - \frac{1}{\lambda} &=& \E_{0} \left[ \int_0^\infty\left(e^{-im B^S(t)} -1 \right)e^{-\lambda t} dt \right]\nn\\
&=&\E_{0} \left[ \int_0^\infty\left(e^{-im |B(r)|} -1 \right)e^{-\lambda (r+\gamma L(r))} (dr+\gamma dL(r)) \right].
\eeq
The local time $L(t)$ only grows when $B(t)$ is at the origin implying that the term into the round bracket is zero in the integral with respect to $dL(r)$. As a consequence we have
\be
\psi^{S}_0(m,\lambda,\gamma) - \frac{1}{\lambda} = \E_{0} \left[ \int_0^\infty\left(e^{-im |B(r)|} -1 \right)e^{-\lambda (r+\gamma L(r))} dr \right].
\ee
Then  \eqref{sticky} follows by using  the expression \eqref{dens} and the formula for the Laplace transform
\be\label{Lapl}
\int_{0}^{\infty} e^{-\lambda r} \frac{a}{\sqrt{2\pi r^3}} e^{-a^2/2r} dr = e^{-\sqrt{2\lambda} a} \quad \qquad a>0.
\ee
\epr

\subsection{The joint sticky process}
\label{joint-section}
Let $\tilde B(t)$ and $B(t)$ be two independent Brownian motions starting, respectively,  from $0$ and from $z\ge 0$. Let $s(t)$ \col{be defined via \eqref{s-transform}}, with $L(t)$ \col{being now the} local time of $B(t)$.
\col{We compute in this section \col{the} Fourier-Laplace transform of the candidate scaling limit, i.e.
the joint  process $(\tilde B(2t-s(t)),|B(s(t))|)$, that is defined as}
\be
\label{psi-sticky}
\Psi_z^{S}(\kappa,m,\lambda,\gamma):=\int_0^\infty\E_{0,z}\left[e^{-i\kappa \tilde B(2t-s(t))-im |B(s(t))|}\right] e^{-\lambda t} dt,
\ee
\col{where the expectation $\mathbb{E}_{0,z}$ denotes expectation w.r.t. both the  $\tilde B(t)$ Brownian motion that starts from $0$ and the $B(t)$ process that starts from $z\ge0$.}

\col{We start with the following Lemma that extends \eqref{dens} to a positive initial condition.}
\bl[\col{Joint density of reflected Brownian motion and local time}] 
\col{For all $z > 0$} we have
\begin{eqnarray}
\label{dens2}
\mathbb{P}_z(|B(t)| \in dx, L(t)\in dy) & = &  
\frac 1 {\sqrt{2\pi t}} \cdot \(e^{- \frac{(z-x)^2}{2t}}- e^{- \frac{(z+x))^2}{2t}} \)\, \delta_0(y)\,dx\, dy\\
&+ & \left(2\int_0^t   \frac{x+y}{\sqrt{2\pi (t-s)^3}} \;e^{-\frac{(x+y)^2}{2(t-s)}} \cdot\frac{z}{\sqrt{2\pi s^3}}\; e^{-\tfrac{z^2}{2s}} \, ds\right)  dx \, dy\nn
\end{eqnarray}
\el
\bpr
Let $\nu_z(\cdot)$ be the probability density function of $\tau_z$, the first hitting time of \col{$0$} for a Brownian motion starting from \col{$z > 0$}, i.e.
\be
\label{hit}
\col{\nu_z(s)=\frac{z}{\sqrt{2\pi s^3}}\, e^{-\tfrac{z^2}{2s}}} .
\ee
\col{By conditioning to the time $t$ being smaller or larger than $\tau_z$ we have}
\beq
\mathbb{P}_z(|B(t)| \in dx, L(t)\in dy) & = & \mathbb{P}_z(|B(t)|\in dx, \, \min_{s\le t}B(s)>0)\cdot \delta_0(y) \, dy\\
&+ &\int_0^t \mathbb{P}_0(|B(t-s)| \in dx, L(t-s)\in dy) \, \col{\nu_{z}(s) ds}. \nn
\eeq
\col{The reflection principle for Brownian motion gives}
\beq
 \mathbb{P}_z\(|B(t)|\in dx, \, \min_{s\le t}B(s)>0\)
=\frac 1 {\sqrt{2\pi t}} \cdot \(e^{- \frac{(z-x)^2}{2t}}- e^{- \frac{(z+x)^2}{2t}} \)\, dx, \nn
\eeq
\col{whereas the use of \eqref{dens} and \eqref{hit} yields}
\beq
&&\int_0^t \mathbb{P}_0(|B(t-s)| \in dx, L(t-s)\in dy) \, \col{\nu_{z}(s) ds}\nn\\
&&= \left(2\int_0^t   \frac{x+y}{\sqrt{2\pi (t-s)^3}} \;e^{-\frac{(x+y)^2}{2(t-s)}} \cdot\frac{z}{\sqrt{2\pi s^3}}\; e^{-\tfrac{z^2}{2s}} \, ds \right) dx \, dy.
\eeq
%and, using the formula for the joint probability density  a Browniam  and its maximum:
%\be
%\mathbb P_0\(B(t)\in dx,\max_{s\le t}B(t)\in dy\)=\sqrt{\frac 2 \pi} \cdot \frac{2y-x}{t^{\tfrac  3 2}}\, \exp\left\{- \frac{(2y-x)^2}{2t}\right\}, \qquad \text{for}\quad y\ge0, \, x \le y\nn
%\ee
%we obtain 
%\beq
%&& \mathbb{P}_z(|B(t)|\in dx, \, \min_{s\le t}B(s)>0)= \mathbb{P}_0(z-B(t)\in dx, \, \max_{s\le t}B(s)<z)=\nn\\
%&&\hskip2cm=\sqrt{\frac 2 {\pi t}} \cdot \int_{0 \vee z-x}^z \frac{2y-(z-x)}{t}\, \exp\left\{- \frac{(2y-(z-x))^2}{2t}\right\}\, dy\nn\\
%&&\hskip2cm=\sqrt{\frac 1 {2\pi t}} \cdot \(\exp\left\{- \frac{(2(0\vee (z-x))-(z-x))^2}{2t}\right\}- \exp\left\{- \frac{(2z-(z-x))^2}{2t}\right\} \)\nn\\
%&&\hskip2cm=\sqrt{\frac 1 {2\pi t}} \cdot \(\exp\left\{- \frac{(z-x)^2}{2t}\right\}- \exp\left\{- \frac{(z+x))^2}{2t}\right\} \)\nn
%\eeq
This concludes the proof.
\epr

\col{Armed with the previous Lemma we can compute the Fourier-Laplace transform defined in \eqref{psi-sticky}}.
\bl[\col{Fourier-Laplace transform of the joint sticky process}] For all $z\ge 0$ we have
\beq
&&\Psi_{\col{z}}^{S}(\kappa,m,\lambda,\gamma)=\frac {\gamma \,  e^{-\sqrt{\kappa^2+2\lambda}z}}{\gamma(\kappa^2+\lambda)+\sqrt{\kappa^2+2\lambda}} \; + \nn\\
&&\frac 1 {\sqrt{\kappa^2+2\lambda}}\,\bigg\{\frac{\sqrt{\kappa^2+2\lambda}-\gamma(\kappa^2+\lambda)}{\sqrt{\kappa^2+2\lambda}+\gamma(\kappa^2+\lambda) }\,\int_0^\infty  e^{-imx}\,e^{-\sqrt{\kappa^2+2\lambda}|z+x|} \,dx+\; \int_0^\infty\,e^{-imx} e^{-\sqrt{\kappa^2+2\lambda}|z-x|}\, dx\bigg\}.\nn
\eeq
\el

\bpr
\col{We follow a strategy similar to the one in the proof of Lemma \ref{lemma-preliminary}. It is convenient to write}
\be
\Psi_z^{S}(\kappa,m,\lambda,\gamma) - f_z(\kappa,\lambda)=  \int_0^\infty \col{\E_{0,z}} \left[ e^{-i\kappa \tilde B(2t-s(t))}\left(e^{-im B(s(t))} -1 \right)\right] e^{-\lambda t} dt,  \nn
\ee
where 
$$
f_z(\kappa,\lambda)=\int_0^\infty \col{\E_{0,z}}\left[  e^{-i\kappa \tilde B(2t-s(t))} \right] e^{-\lambda t} dt.
$$
and $\E_{0,z}$ denotes expectation with respect to the $\tilde B(t)$ process started at $0$ and the $B(t)$ process started at $z$. 
We apply the change of variable $s(t) = r$ to obtain
\beq
\Psi_z^{S}(\kappa,m,\lambda,\gamma) - f_z(\kappa,\lambda)&=& \col{\E_{0,z}} \left[ \int_0^\infty e^{-i\kappa \tilde B(r+2\gamma L(r))}\left(e^{-im |B(r)|} -1 \right)e^{-\lambda (r+\gamma L(r))} (dr+\gamma dL(r)) \right]\nn\\
&=& \col{\E_{0,z}} \left[ \int_0^\infty e^{-i\kappa \tilde B(r+2\gamma L(r))}\left(e^{-im |B(r)|} -1 \right)e^{-\lambda (r+\gamma L(r))} dr \right],
\eeq
\col{where the last equality uses again that the local time $L(t)$ only grows when $B(t)$ is at the origin.
Using then the independence of $\tilde B(t)$ and $B(t)$, and the expression for the characteristic function of the standard Brownian motion, we arrive to}
\beq
\Psi_z^{S}(\kappa,m,\lambda,\gamma) &=& \E_z \left[ \int_0^\infty e^{-\frac{\kappa^2}2 (r+2\gamma L(r))}\,e^{-im |B(r)|} \,e^{-\lambda (r+\gamma L(r))} dr \right]
 \nn\\
&+&f_z(\kappa,\lambda)-\E_z \left[ \int_0^\infty e^{-\frac{\kappa^2}2 (r+2\gamma L(r))}e^{-\lambda (r+\gamma L(r))} dr \right] \label{equa}
\eeq
\col{We now evaluate separately the two terms on the r.h.s.. For the first term, thanks to the formula \eqref{dens2}, we may write}

\beq
&&\E_z \left[ \int_0^\infty e^{-\frac{\kappa^2}2 (r+2\gamma L(r))}\,e^{-im |B(r)|} \,e^{-\lambda (r+\gamma L(r))} dr \right]=\nn\\
&& \int_0^\infty e^{-imx} \left[ \int_0^\infty   e^{-(\frac{\kappa^2}2+\lambda)r} \frac 1 {\sqrt{2\pi r}}\, \(e^{- \frac{(z-x)^2}{2r}}- e^{- \frac{(z+x)^2}{2r}} \)\, \,dr \right]dx+ \label{undo}\\
&&\int_0^\infty\,e^{-imx}\int_0^\infty e^{-(\kappa^2+\lambda)\gamma y} \left[ \int_0^\infty e^{-(\frac{\kappa^2}2+\lambda)r} \left (2 \int_0^r  \frac{x+y}{\sqrt{2\pi (r-s)^3}} \;e^{-\frac{(x+y)^2}{2(r-s)}} \cdot\frac{z}{\sqrt{2\pi s^3}}\; e^{-\tfrac{z^2}{2s}} \, ds\right)  dr \right]\,  dy \, dx . \nn \\ \label{dude}
\eeq
\col{In \eqref{undo} we may use the formula for the Laplace transform
\be
\label{lapl-gauss}
\int_{0}^{\infty} e^{-\lambda r} \frac{1}{\sqrt{2\pi r}} e^{-a^2/2r} = \frac{e^{-a \sqrt{2\lambda}}}{\sqrt{2 \lambda}} ,
\ee
and in \eqref{dude} we may employ the Laplace transform \eqref{Lapl} and the convolution rule.
All in all, we find}
\beq
&&\E_z \left[ \int_0^\infty e^{-\frac{\kappa^2}2 (r+2\gamma L(r))}\,e^{-im |B(r)|} \,e^{-\lambda (r+\gamma L(r))} dr \right]=\nn\\
&&\frac 1 {\sqrt{\kappa^2+2\lambda}}\,\int_0^\infty  e^{-imx}\,\(e^{-|z-x|\sqrt{\kappa^2+2\lambda}}-e^{-|z+x|\sqrt{\kappa^2+2\lambda}}\) \,dx+\nn\\
&&\frac {2} {\gamma(\kappa^2+\lambda) +\sqrt{\kappa^2+2\lambda}}\; \int_0^\infty\,e^{-imx} e^{-\sqrt{\kappa^2+2\lambda}(x+z)}\, dx .\label{home1}
\eeq
\col{For the second term on the r.h.s. of \eqref{equa} we observe that}
\beq
 f_z(\kappa,\lambda)&=&\col{\E_{0,z}} \left[ \int_0^\infty e^{-i\kappa \tilde B(2t-s(t))}e^{-\lambda t} dt \right]\nn\\
%&=&\E_z \left[ \int_0^\infty e^{-i\kappa \tilde B(r+2\gamma L(r))}e^{-\lambda (r+\gamma L(r))} d(r+\gamma L(r)) \right]\nn\\
&=&\E_z \left[ \int_0^\infty e^{-\frac {\kappa^2}2(r+2\gamma L(r))}e^{-\lambda (r+\gamma L(r))} d(r+\gamma L(r)) \right].\nn
\eeq
\col{As a consequence we find}
\be
f_z(\kappa,\lambda)-\E_z \left[ \int_0^\infty e^{-\frac{\kappa^2}2 (r+2\gamma L(r))}e^{-\lambda (r+\gamma L(r))} dr \right]
=\gamma\E_z \left[ \int_0^\infty e^{-\frac {\kappa^2}2(r+2\gamma L(r))}e^{-\lambda (r+\gamma L(r))} dL(r)\right] .\nn
\ee
\col{This last integral can be evaluated integrating by parts, yielding}
\beq
&&\gamma\E_z \left[ \int_0^\infty e^{-(\frac {\kappa^2}2+\lambda)r}\, e^{-\gamma (\kappa^2+\lambda) L(r)} dL(r)\right]\nn\\
&&=- \frac 1{\kappa^2+\lambda}\,\E_z \left[ \int_0^\infty e^{-(\frac {\kappa^2}2+\lambda)r}\, d \col{\left ( e^{-\gamma (\kappa^2+\lambda) L(r)} \right )} \right]\nn\\
&&= \frac 1{\kappa^2+\lambda}\;\left\{\E_z \left[ \int_0^\infty d \col{\left( e^{-(\frac {\kappa^2}2+\lambda)r}\right)} \, e^{-\gamma (\kappa^2+\lambda) L(r)} \right]+
\E_z \left[ e^{-\gamma (\kappa^2+\lambda) L(0)} \right]\right\}\nn\\
&&= \frac 1{\kappa^2+\lambda}\,\left\{-\(\frac{\kappa^2}2+\lambda\)\E_z \left[ \int_0^\infty  e^{-(\frac {\kappa^2}2+\lambda)r}\, e^{-\gamma (\kappa^2+\lambda) L(r)}\, dr\right]+1\right\}\nn\\
&&=- \frac {\tfrac{\kappa^2}2+\lambda}{\kappa^2+\lambda}\;\E_z \left[ \int_0^\infty  e^{-(\frac {\kappa^2}2+\lambda)r}\, e^{-\gamma (\kappa^2+\lambda) L(r)}\, dr\right]+\frac 1 {\kappa^2+\lambda}.\nn
\eeq
\col{Furthermore, by using formula \eqref{dens2}, we have}
\beq
&&\E_z \left[ \int_0^\infty  e^{-(\frac {\kappa^2}2+\lambda)r}\, e^{-\gamma (\kappa^2+\lambda) L(r)}\, dr\right] = \nn\\
&& \int_0^\infty\int_0^\infty \frac 1 {\sqrt{2\pi r}} \; e^{-(\frac {\kappa^2}2+\lambda)r} \cdot \(e^{- \frac{(z-x)^2}{2r}}- e^{- \frac{(z+x)^2}{2r}} \)\, dr\,dx+\nn\\
&&2 \int_0^\infty \int_0^\infty e^{-\gamma (\kappa^2+\lambda) y} \col{\left[ \int_0^\infty e^{-(\frac {\kappa^2}2+\lambda)r} \left(\int_0^r   \frac{x+y}{\sqrt{2\pi (r-s)^3}} \;e^{-\frac{(x+y)^2}{2(r-s)}} \frac{z}{\sqrt{2\pi s^3}}\; e^{-\tfrac{z^2}{2s}} \, ds \right )dr \right]} dx \, dy = \nn\\
%&&=  \frac 1 {\sqrt{\kappa^2+2\lambda}}\,\int_0^\infty \(e^{-|z-x|\sqrt{\kappa^2+2\lambda}}-e^{-|z+x|\sqrt{\kappa^2+2\lambda}}\) \,dx+ 2 \int_0^\infty e^{-\gamma (\kappa^2+\lambda) y} e^{-\sqrt{\kappa^2+2\lambda}(x+y+z)}\,dx\, dy\nn\\
&&\frac 1 {\sqrt{\kappa^2+2\lambda}}\, \int_0^\infty \(e^{-|z-x|\sqrt{\kappa^2+2\lambda}}-e^{-|z+x|\sqrt{\kappa^2+2\lambda}}\) \,dx+\frac {2\,  e^{-\sqrt{\kappa^2+2\lambda}z}}{\gamma(\kappa^2+\lambda)+\sqrt{\kappa^2+2\lambda}} \cdot \frac 1 {\sqrt{\kappa^2+2\lambda}}, \nn
\eeq
\col{where in the last equality we used again the Laplace transforms \eqref{Lapl} and \eqref{lapl-gauss}.
Summarizing, for the second term on the r.h.s. of \eqref{equa} we have}
\beq
&&f_z(\kappa,\lambda)-\E_z \left[ \int_0^\infty e^{-\frac{\kappa^2}2 (r+2\gamma L(r))}e^{-\lambda (r+\gamma L(r))} dr \right]\nn\\
&&=-\frac 1 2 \, \frac {\sqrt{{\kappa^2}+2\lambda}}{\kappa^2+\lambda}\int_0^\infty \(e^{-|z-x|\sqrt{\kappa^2+2\lambda}}-e^{-|z+x|\sqrt{\kappa^2+2\lambda}}\) \,dx\nn\\
&&- \frac {\sqrt{{\kappa^2}+2\lambda}}{\kappa^2+\lambda}
\frac {\,  e^{-\sqrt{\kappa^2+2\lambda}(z-1)}}{\gamma(\kappa^2+\lambda)+\sqrt{\kappa^2+2\lambda}} +\frac 1 {\kappa^2+\lambda}. \label{home2}
\eeq
\col{Hence, substituting \eqref{home1} and \eqref{home2} into \eqref{equa}, the statement of the Lemma follows after elementary simplifications.}
\epr

\subsection{Limiting cases}
\label{limiting}
%\red{Here we explain the meaning of $\Psi^R$ and $\Psi^A$....}
\col{In this section we show that the joint sticky process interpolates between two limiting cases.}
\subsubsection{Reflection: $\gamma=0$}
%\vskip.2cm
%\noindent
\col{Let $B(t)$ be a Brownian motion starting from $0$ and $B^R(t)$ be a Brownian motion on $\mathbb R^+$ reflected at 0 and starting from $z\ge 0$. Suppose they are independent and denote by  $\Psi_z^R$ the Laplace transform of the characteristic function of the joint process, i.e. 
%$S_{0,x}^R(t)=(B_t,B^R_t)$:
\be
\Psi^R_{z}(\kappa,m,\lambda):=\int_0^\infty\E_{0,z} \left[e^{-i\kappa \tilde B(t) -i m B^R(t)}\right] e^{-\lambda t} dt. 
\ee
Then we have 
\be\label{PsiR}
\Psi^R_{z}(\kappa,m,\lambda)=\frac{1}{\sqrt{2\lambda+{\kappa^2}}}\, \int_0^\infty  e^{-imx} \,
\left(e^{-\sqrt{2\lambda+\kappa^2}|x+z|}+e^{-\sqrt{2\lambda+\kappa^2}|x-z|} \right) \, dx.
\ee
Indeed, from the knowledge of the transition probability of the joint process, an immediate computation gives
\beq
&&\int_0^\infty\E_{0,z} \left[e^{-i(\kappa \tilde B(t) + m B^R(t))}\right] e^{-\lambda t} dt =
\frac{1}{2\pi}\int_0^\infty  e^{-imx} \int_{-\infty}^{\infty} \frac{e^{i\bar m(x+z)}+e^{i\bar m(x-z)}}{\lambda +\frac{\kappa^2}{2}+\frac{\bar m^2}{2} }\, d\bar m \, dx.\nn
%\\
%&&= \frac{1}{2\pi}\int_0^\infty  e^{-imx} \int_{-\infty}^{\infty}
%\frac{\cos(\bar m(x+z))+\cos(\bar m(x-z))}{\lambda +\frac{\kappa^2}{2}+\frac{\bar m^2}{2} }\, d\bar m \, dx\nn
%&&= \int_0^\infty  e^{-imx'} \, \frac{1}{\sqrt{2\lambda+{\kappa^2}}}\left(e^{-\sqrt{2\lambda+\kappa^2}|x'+x|}+e^{-\sqrt{2\lambda+\kappa^2}|x'-x|} \right) \, dx'\nn
%&&= \frac{1}{2}  \,
%\frac 1 {\lambda+\frac{\kappa^2}{2} +i m \sqrt{\lambda+\frac{\kappa^2}{2}} } \left(e^{-z\sqrt{\lambda+\frac{\kappa^2}{2}}}+e^{z\sqrt{\lambda+\frac{\kappa^2}{2}}} \right) \nn\\
\eeq
This yields \eqref{PsiR} using the Fourier transform 
\be
\label{Fourier-formula}
\int_{-\infty}^{\infty}\frac{e^{ika}}{k^2+b^2}\, dk=\frac \pi b \, e^{-b|a|}.
\ee.
}
%\vskip.3cm
%\noindent
\subsubsection{Absorption: $\gamma\to \infty$}
%\vskip.2cm
%\noindent
\col{Let  $\tilde B(t)$ be a standard Brownian motion and 
let  $B^A(t)$ be an independent  Brownian motion on $\mathbb R^+$ 
absorbed in 0 and starting from $z > 0$. Define $\tau_z$ as the absorption time,
i.e. $\tau_z = \inf\{t \ge 0 : B^A(t) = 0\}$}.  
%Then $S^A_{0,z}(t)= (B(2t-t\wedge \tau), B^A(t))$.

\col{
The process $\tilde B(2t-t\wedge \tau_z)$ describes the evolution of the sum of two coalescing Brownian motions started at two positions such that initially the sum is zero. Indeed, given the coalescing time $\tau_z$ (i.e. the hitting time of level zero for the distance of the two coalescing Brownian motions when the distance is initially $z>0$), the center of mass of the two coalescing Brownian motion evolves as the sum of two independent standard Brownians until the coalescing time and, after coalescence, it evolves as a Brownian started at the position where coalescence occurred with double speed.
Define
\be
\Psi^A_{z}(\kappa,m,\lambda)=\int_0^\infty\E_{0,z} \left[e^{-i\kappa \tilde B(2t-t\wedge \tau_z) -im B^A(t)}\right] e^{-\lambda t} dt,
\ee
\noindent
 then we have
\be\label{PsiA}
\Psi^A_{z}(\kappa,m,\lambda)=\frac{1}{\sqrt{2\lambda+{\kappa^2}}}\, \int_0^\infty  e^{-imx'} \,
\left(e^{-\sqrt{2\lambda+\kappa^2}|x-z|}-e^{-\sqrt{2\lambda+\kappa^2}|x+z|} \right) \, dx+\frac{e^{-z \sqrt{2\lambda +\kappa^2}}}{\lambda+\kappa^2}.
\ee
To show this one starts from
\beq 
\Psi^A_{z}(\kappa,m,\lambda)
=\int_0^\infty  \E_{0,z} \left[e^{-i\kappa \tilde B(t) -im B^A(t)} \mathbf{1}_{t < \tau_z }\right] \, e^{-\lambda t} dt + \int_0^\infty\E_{0} \left[e^{-i\kappa \tilde B(2t-\tau_z)} \mathbf{1}_{t \ge \tau_z }\right] e^{-\lambda t} dt. \nn \\ \label{sopraa}
\eeq
The first term in the r.h.s. of \eqref{sopraa} is given by
\beq
&&
\frac{1}{2\pi}\int_0^\infty  e^{-imx} \int_{-\infty}^{\infty} \frac{e^{i\bar m(x-z)}-e^{i\bar m(x+z)}}{\lambda +\frac{\kappa^2}{2}+\frac{\bar m^2}{2} }\, d\bar m \, dx\nn\\
%&&= \frac{1}{2\pi}\int_0^\infty  e^{-imx'} \int_{-\infty}^{\infty}
%\frac{\cos(\bar m(x'-z))-\cos(\bar m(x'+z))}{\lambda +\frac{\kappa^2}{2}+\frac{\bar m^2}{2} }\, d\bar m \, dx'\nn\\
&&= \int_0^\infty  e^{-imx} \, \frac{1}{\sqrt{2\lambda+{\kappa^2}}}
\left(e^{-\sqrt{2\lambda+\kappa^2}|x-z|}-e^{-\sqrt{2\lambda+\kappa^2}|x+z|} \right) \, dx.\nn
%&&= \frac{1}{2}  \,
%\frac 1 {\lambda+\frac{\kappa^2}{2} +i m \sqrt{\lambda+\frac{\kappa^2}{2}} } \left(e^{-z\sqrt{\lambda+\frac{\kappa^2}{2}}}+e^{z\sqrt{\lambda+\frac{\kappa^2}{2}}} \right) \nn\\
\eeq
For the second term in the r.h.s of \eqref{sopraa}, let $\nu_z(\cdot)$
be the probability density of $\tau_z$ (cf. \eqref{hit}). Then, using the fact that
\be
\label{integra}
\int_0^\infty e^{-\lambda t}\, \nu_z(t)\, dt= e^{-z \sqrt{2\lambda}},
\ee
we obtain that  the second term in the r.h.s of \eqref{sopraa} is equal to
\beq
\int_0^\infty dt \, e^{-\lambda t}\int_0^t ds\, \nu_z(s)\int_{-\infty}^{\infty}dy'\, e^{-i\kappa y'}\int_{-\infty}^{\infty} dy \,
\frac{e^{-\frac{y^2}{2s}}}{\sqrt{2\pi s}}\, \frac{e^{\frac{-(y'-y)^2}{4(t-s)}}}{\sqrt{4\pi(t-s)}}=\frac{e^{-z \sqrt{2\lambda +\kappa^2}}}{\lambda+\kappa^2}
\eeq
This is obtained by first doing the integrals in $y$ and $y'$ as Fourier transforms of suitable Brownian kernels
and then by  applying integration by parts to the $dt$ integral followed by the use of formula \eqref{integra}.
%This produces \eqref{PsiS}.
}

\subsubsection{Summary}
\col{
Notice that we can rewrite the function $\Psi_z^{S}$ as an interpolation between $\Psi_z^{R}$ and $\Psi_z^A$ as follows
\be
\label{PsiS}
\Psi^{S}_z(\kappa,m,\lambda,\gamma) =
c^{(\gamma)}(\kappa,\lambda)\;
\; \Psi^R_{z}(\kappa, m,\lambda) +\;
\left(1-c^{(\gamma)}(\kappa,\lambda)\right)
\;\Psi^A_{z}(\kappa, m,\lambda)
%\end{eqnarray}
\ee
with
\be\label{c}
c^{(\gamma)}(\kappa,\lambda) = \frac {\sqrt{\kappa^2+2\lambda }}{\sqrt{\kappa^2+2\lambda }+\gamma\(\kappa^2+ \lambda\)}
\ee
%We denote by $S_{0,x}^\gamma(t)$ the corresponding process on $\R\times [0,\infty)$ starting from $(0,x)$.
%\noindent
%Here the $R,A$-upper-scripts refer to reflected, resp.\ absorbed, as will be explained in more detail below.
%The corresponding process will then be denoted by $S^R_{0,x}(t), S^A_{0,x}(t)$ (of which the second component
%is then reflected, resp.\ absorbed Brownian motion).
Notice also that
\begin{equation}
\Psi^{S}_z(\kappa, m,\lambda,0)  =
\; \Psi^R_z(\kappa, m,\lambda) \qquad \text{and} \qquad
\lim_{\gamma\to+\infty}\Psi^{S}_{z}(\kappa, m,\lambda,\gamma)   =
\; \Psi^A_z(\kappa,m,\lambda)
\end{equation}
since  $c^{(0)}(\kappa,\lambda) = 1$  and $\lim_{\gamma \to \infty}c^{(\gamma)}(\kappa,\lambda) = 0$.
}

\subsection{Scaling limit}
\label{scaling-section}

\bp[\col{Convergence of Fourier-Laplace transform}]\label{scalingthmFL}
For all $\kappa, m\in\R, \lambda>0$ we have
\begin{equation}\label{thetaeps}
\lim_{\eps\to 0}\int_0^\infty \mathbb E_{u,w}\[e^{-i(\kappa  (U_\eps(t)-U)+ m W_\eps(t))}\] \, e^{-\lambda t}\, dt=
\left\{
\begin{array}{ll}
\Psi^{R}_W(\kappa,m,\lambda) & \text{in the Reflected Regime,}\\
\\
\Psi^{S}_W(\kappa,m,\lambda,\gamma)& \text{in the Sticky Regime,} \\
\\
\Psi^{A}_W(\kappa,m,\lambda)& \text{in the Absorbed Regime.}
\end{array}
\right.
\end{equation}
\ep

\begin{proof}
Using \col{\eqref{seconda} and} \eqref{platone} we can rewrite the Fourier-Laplace transform  of Theorem \ref{Teo:Green} in integral form:
 \begin{eqnarray}
&& G^{(\theta)}(w,w',\kappa,\lambda)=
%=
% \frac {f^\theta_{\lambda,\kappa}(w,w')} {\mathcal Z^{(0)}_{\lambda,\kappa}}
%\left\{ \zeta_{\lambda,\kappa}^{|w'-w|-1}+\zeta_{\lambda,\kappa}^{w'+w-1} \cdot
%\left( 2\,  \frac{\mathcal Z_{\lambda,\kappa}^{(0)} }{\mathcal Z_{\lambda,\kappa}}-1 \right)\right\} \nn
%\\
\frac 1{2\pi}\int_{-\pi}^\pi \, \frac{f_{\lambda,\kappa}^\theta(w,w')}{2+\lambda-2  \nu_\kappa\cos(\bar m)} \(e^{ i\bar m(w'-w)} + e^{ i\bar m(w'+w)} 
\left( 2\,  \frac{\mathcal Z_{\lambda,\kappa}^{(0)} }{\mathcal Z^{(\theta)}_{\lambda,\kappa}}-1 \right)\)  d\bar m . \nn
\end{eqnarray}
%with
% \begin{eqnarray}
%f_{\lambda,\kappa}^{\theta}(w,w')&=& 1 + \mathbf 1_{w'=0} \cdot \frac 1 2 \, \{\theta-1 +\theta (\nu_\kappa^{-1} \zeta_{\lambda,\kappa}-1)\mathbf 1_{w=0}\}\nn\\
%&=& \( 1 -\frac 1 2  \mathbf 1_{w'=0}\) +  \mathbf 1_{w'=0} \cdot \frac \theta 2 \, \{1 + (\nu_\kappa^{-1} \zeta_{\lambda,\kappa}-1)\mathbf 1_{w=0}\}\nn
%\end{eqnarray}
%then
% \begin{eqnarray}
%&&\int_0^\infty \mathbf E^\theta_{u,w}\[e^{-i\kappa (u(t)-u)}\] \, e^{-\lambda t}\, dt = \sum_{u'\in \mathbb Z, w'\ge 0}e^{-i\kappa (u'-u) }  \mathcal G^{(\theta)}((u,w);(u',w');\lambda) =\sum_{w'\ge 0} G^{\theta}_\kappa(w,w',\lambda)\nn
%%\end{eqnarray}
%hence
% \begin{eqnarray}
%&&\int_0^\infty \mathbf E^\theta_{u,w}\[e^{-i(\eps\kappa  (u(\eps^{-2}t)-u)+\eps m (w(\eps^{-2}t)-w))}\] \, e^{-\lambda t}\, dt = \eps^2 \sum_{u',w'}e^{-i\eps (\kappa (u'-u) +m (w'-w))} \mathcal G^{(\theta)}((u,w);(u',w');\eps^2\lambda)\nn \\
%&&=
%  \frac{\eps^2 f_{\eps^2\lambda,\eps\kappa}^\theta(w,w')}{\(2\alpha+ \eps^2\lambda\)- 2\alpha \nu(\eps\kappa)\cos(\eps m)} \(1 + e^{2 i\eps mw} \cdot
%\left( 2\,  \frac{\mathcal Z_{\eps^2\lambda,\eps\kappa}^{(0)} }{\mathcal Z^{(\theta)}_{\eps^2\lambda,\eps\kappa}}-1 \right)\) \, \nn
%\end{eqnarray}
%
%
%\vskip3cm
Furthermore, by taking the Fourier transform with respect to the $w$ variable, one finds
 \begin{eqnarray}
&&\int_0^\infty \mathbb E_{u,w}\[e^{-i\kappa (u(t)-u) -im w(t)}\] \, e^{-\lambda t}\, dt \\
%&&= \sum_{u'\in \mathbb Z,w'\ge 0}e^{-i(\kappa (u'-u) +m w')}  \mathcal G^{(\theta)}((u,w);(u',w');\lambda)\nn \\
&&= \sum_{w'\ge 0}e^{-im w'}  G^{(\theta)}(w,w',\kappa,\lambda)\nn \\
%&&=
%  \frac{1}{\(2\alpha+ \lambda\)- 2\alpha \nu_\kappa\cos(m)} \(1 + e^{2 imw} \cdot
%\left( 2\,  \frac{\mathcal Z_{\lambda,\kappa}^{(0)} }{\mathcal Z^{(\theta)}_{\lambda,\kappa}}-1 \right)\) \, \nn
%\\
%&&+\sum_{u'}e^{-i(\kappa (u'-u) -m w)} \mathcal G^{(\theta)}((u,w);(u',0);\lambda)
&&= \frac 1{2\pi}\int_{-\pi}^\pi \, \frac{\sum_{w'\ge 0}f^{(\theta)}_{\lambda,\kappa}(w,w')\, e^{ i(\bar m-m)w'}}{2+\lambda-2  \nu_\kappa\cos(\bar m)} \,
\(e^{-i \bar m w}+e^{i  \bar m w} 
\left( 2\,  \frac{\mathcal Z_{\lambda,\kappa}^{(0)} }{\mathcal Z^{(\theta)}_{\lambda,\kappa}}-1 \right)\) d\bar m. \nn
\end{eqnarray}
Hence, for the full Fourier-Laplace transform of the scaled process, one has
 \begin{eqnarray}
 \label{what3}
&&\int_0^\infty \mathbb E_{u,w}\[e^{-i\sqrt 2(\kappa  (U_\eps(t)-\tfrac{\eps u}{\sqrt 2})+ m W_\eps(t))}\] \, e^{-\lambda t}\, dt = \\
%&&=\eps^2 \sum_{u'\in \mathbb Z,w'\ge 0}e^{-i\eps (\kappa (u'-u) +m w')} \mathcal G^{(\theta)}((u,w);(u',w');\eps^2\lambda)\nn \\
%&&= \eps^2 \sum_{w'\ge 0}e^{-im\eps w'}  G^{\theta}_{\eps\kappa}(w,w',\eps^2\lambda)\nn \\
&&= \frac {\eps^2}{2\pi}\int_{-\pi/\eps}^{\pi/\eps} \, \frac{\eps\sum_{w'\ge 0}f_{\eps^2\lambda,\eps\kappa}^{(\theta)}(w,w')\, e^{ i\eps(\bar m- m)w'}}{2+\eps^2\lambda-2  \nu_{\eps\kappa}\cos(\eps \bar m)} \,
\( e^{- iw\eps\bar m} +  e^{ iw\eps\bar m}  
\left( 2\,  \frac{\mathcal Z^{(0)}_{\eps^2\lambda,\eps\kappa}}{\mathcal Z^{(\theta)}_{\eps^2\lambda,\eps\kappa}}-1 \right)\) d\bar m \nn.
\end{eqnarray}
The explicit form of $f_{\lambda,\kappa}^{\theta}(w,w')$ in \eqref{f-funct} gives
\begin{eqnarray}\label{summation}
&&\eps\sum_{w'\ge 0}f_{\eps^2\lambda,\eps\kappa}^{(\theta)}(w,w')\, e^{ i\eps(\bar m- m)w'}= \\
&& \eps\sum_{w'\ge 0} e^{ i\eps(\bar m- m)w'}+ \eps \, \frac \theta 2 \(1+\(\nu_{\eps\kappa}^{-1}\zeta_{\eps^2 \lambda,\eps\kappa}-1\)\mathbf 1_{w=0}\)-\frac \eps 2.\nn\
\end{eqnarray}
%From now on we denote by $\gamma$ the limit as $\eps$ of $\gamma_\eps$, including, with slight abuse of notation, the case $\gamma=+\infty$.
Since $\nu_{\eps\kappa}^{-1}\zeta_{\eps^2 \lambda,\eps\kappa}=1+o(\eps)$, 
\col{recalling the definition of the parameter $\gamma$ in \eqref{gamma},}
one has
\begin{eqnarray}
\label{what1}
&&\lim_{\eps \to 0}\eps\sum_{w'\ge 0}f_{\eps^2\lambda,\eps\kappa}^{(\theta)}(w,w')\, e^{ i\eps(\bar m- m)w'}=\int_0^{\infty} e^{ i(\bar m- m)x'}\, dx'+  \, \frac \gamma {\sqrt 2}.
\end{eqnarray}
Similarly one finds
\begin{eqnarray}
\label{what2}
\lim_{\eps \to 0}\,  \frac{\mathcal Z^{(0)}_{\eps^2\lambda,\eps\kappa}}{\mathcal Z^{(\theta)}_{\eps^2\lambda,\eps\kappa}} =
\frac{\sqrt{\kappa^2+\lambda }}{\sqrt{\kappa^2+\lambda }+\gamma\, \sqrt 2 \(\kappa^2+\frac \lambda{2}\)}.
\end{eqnarray}
Hence, taking the limit $\eps\to 0$ in \eqref{what3} and using  \eqref{what1} and \eqref{what2} one has
 \begin{eqnarray}
&&\lim_{\eps\to 0}\int_0^\infty \mathbb E_{u,w}\[e^{-i\sqrt{2}(\kappa  (U_\eps(t)-\tfrac{\eps u}{\sqrt 2})+ m W_\eps(t))}\] \, e^{-\lambda t}\, dt \\
&&= \frac {1}{2\pi}\int_{-\infty}^{\infty} \, \frac{\int_0^{\infty} e^{ i(\bar m- m)x'}\, dx'+  \, \frac \gamma {\sqrt 2} }{\lambda + \kappa^2+\bar m^2} \,
\( e^{- iW\bar m} +  e^{ iW\bar m}  
\left( \frac{\sqrt{\kappa^2+\lambda } - \gamma \, \sqrt 2\(\kappa^2+\frac \lambda{2}\)}{\sqrt{\kappa^2+\lambda }+
 \gamma\, \sqrt 2\(\kappa^2+\frac \lambda{2}\)}
 \right)\)  d\bar m.  \nn
%&&= \frac 1 {2\pi}\cdot\frac {\sqrt{\kappa^2+\lambda }}{\sqrt{\kappa^2+\lambda }+
% \sigma\(\kappa^2+\frac \lambda{2}\)}\int_{-\infty}^{\infty} \, \frac{\int_0^{\infty} e^{ i(\bar m- m)x}\, dx+  \, \frac \sigma 2 }{\lambda + \kappa^2+\bar m^2} \,
%\(e^{- iW\bar m}+ e^{ iW\bar m}\) \, d\bar m \nn\\
%&&
%+ \frac 1 {2\pi}\cdot \frac {\sigma\(\kappa^2+\frac \lambda{2}\)}{\sqrt{\kappa^2+\lambda }+
% \sigma\(\kappa^2+\frac \lambda{2}\)} \int_{-\infty}^{\infty} \, \frac{\int_0^{\infty} e^{ i(\bar m- m)x}\, dx+  \, \frac \sigma 2 }{\lambda + \kappa^2+\bar m^2} \,\(e^{- iW\bar m}- e^{ iW\bar m}\)  \, d\bar m \nn
%&&=\frac{\sqrt{\kappa^2+\lambda }}{\sqrt{\kappa^2+\lambda }+ \sigma\(\kappa^2+\frac \lambda{2}\)}\; \Psi^R_W(\kappa,m,\lambda)\nn\\
%&+&\frac 1 {2\pi} \frac{ \sigma \(\kappa^2+\frac\lambda {2}\)}{\sqrt{\kappa^2+\lambda }+ \sigma\(\kappa^2+\frac \lambda{2}\)}\; \int_{-\infty}^{\infty} \, \frac{\int_0^{\infty}e^{ i(\bar m- m)y}\, dy}{\lambda + (\kappa^2+\bar m^2)} \,
%\( e^{- iW\bar m} -  e^{ iW\bar m}\)d\bar m\nn\\
%&&+\frac 1 {2\pi}\,
%\int_{-\infty}^{\infty} \, \frac{\sigma/2}{\lambda + \kappa^2+\bar m^2} \,
%\( e^{- iW\bar m} +  e^{ iW\bar m}  \cdot
%\left( \frac{\sqrt{\kappa^2+\lambda } - \sigma\(\kappa^2+\frac \lambda{2\alpha}\)}{\sqrt{\kappa^2+\lambda }+
% \sigma\(\kappa^2+\frac \lambda{2}\)}
% \right)\) \,  \, d\bar m \nn \label{quo}
\end{eqnarray}
The previous expression can \col{further simplified using the Fourier transform \eqref{Fourier-formula}.
In the end one arrives to:}
\begin{eqnarray}
&&\lim_{\eps\to 0}\int_0^\infty \mathbb E_{u,w}\[e^{-i\sqrt 2(\kappa  (U_\eps(t)-U)+ m W_\eps(t))}\] \, e^{-\lambda t}\, dt = \\
%&&= \frac {1}{2\pi}\int_{-\infty}^{\infty} \, \frac{\int_0^{\infty} e^{ i(\bar m- m)x'}\, dx+  \, \frac \sigma 2 }{\lambda + \kappa^2+\bar m^2} \,
%\( e^{- iW\bar m} +  e^{ iW\bar m}  \cdot
%\left( \frac{\sqrt{\kappa^2+\lambda } - \sigma\(\kappa^2+\frac \lambda{2}\)}{\sqrt{\kappa^2+\lambda }+
% \sigma\(\kappa^2+\frac \lambda{2}\)}
% \right)\) \,  \, d\bar m \nn\\
%&&= \frac 1 {2\pi}\cdot\frac {\sqrt{\kappa^2+\lambda }}{\sqrt{\kappa^2+\lambda }+
% \gamma \, \sqrt 2\(\kappa^2+\frac \lambda{2}\)}\int_{-\infty}^{\infty} \, \frac{\int_0^{\infty} e^{ i(\bar m- m)x'}\, dx'}{\lambda + \kappa^2+\bar m^2} \,
%\(e^{- iW\bar m}+ e^{ iW\bar m}\) \, d\bar m \nn\\
%&&
%+\; \frac 1 {2\pi}\cdot \frac {\gamma \, \sqrt 2\(\kappa^2+\frac \lambda{2}\)}{\sqrt{\kappa^2+\lambda }+
% \gamma \, \sqrt 2\(\kappa^2+\frac \lambda{2}\)} \int_{-\infty}^{\infty} \, \frac{\int_0^{\infty} e^{ i(\bar m- m)x'}\, dx'}{\lambda + \kappa^2+\bar m^2} \,\(e^{- iW\bar m}- e^{ iW\bar m}\)  \, d\bar m \nn \\
% && +\frac {\sqrt 2 \, \gamma} {4\pi}\cdot\frac {\sqrt{\kappa^2+\lambda }}{\sqrt{\kappa^2+\lambda }+
% \gamma \, \sqrt 2\(\kappa^2+\frac \lambda{2}\)}\int_{-\infty}^{\infty} \, \frac{1}{\lambda + \kappa^2+\bar m^2} \,
%\(e^{- iW\bar m}+ e^{ iW\bar m}\) \, d\bar m \nn\\
%&&= \frac {1}{\sqrt{\kappa^2+\lambda }+
% \gamma\, \sqrt 2\(\kappa^2+\frac \lambda{2}\)}\;\( \sqrt{\kappa^2+\lambda }\; \psi^R_{W}(\sqrt 2 \kappa, \sqrt 2 m,\lambda) +\;  {\gamma\, \sqrt 2\(\kappa^2+\frac \lambda{2}\)}
%\Psi^A_{W}(\sqrt 2 \kappa,\sqrt 2 m,\lambda)\)\nn
&&c^{(\gamma)}(\sqrt{2}\kappa,\lambda) \psi^R_{W}(\sqrt 2 \kappa, \sqrt 2 m,\lambda) 
+\left(1- c^{(\gamma)}(\sqrt{2}\kappa,\lambda)\right)\Psi^A_{W}(\sqrt 2 \kappa,\sqrt 2 m,\lambda)\nn
 \end{eqnarray}
%
%
%where
%\begin{eqnarray}
%&&
% e^{- iW\bar m} +  e^{ iW\bar m}  \cdot
%\left( \frac{\sqrt{\kappa^2+\lambda } - \sigma\(\kappa^2+\frac \lambda{2}\)}{\sqrt{\kappa^2+\lambda }+
% \sigma\(\kappa^2+\frac \lambda{2}\)}
% \right)\nn \\
%&&=\frac 2 {\sqrt{\kappa^2+\lambda }+
% \sigma\(\kappa^2+\frac \lambda{2}\)}\( \sqrt{\kappa^2+\lambda }\;\cos(W\bar m)-i\sigma  \(\kappa^2+\frac \lambda{2}\) \sin(W\bar m)\)\nn
%\end{eqnarray}
%then the last term in \eqref{quo} is equal to
%\begin{eqnarray}
%&&\frac 1 {2\pi}\cdot \frac {\sigma\sqrt{\kappa^2+\lambda }} {\sqrt{\kappa^2+\lambda }+
% \sigma\(\kappa^2+\frac \lambda{2}\)}\;
%\int_{-\infty}^{\infty} \, \frac{\cos(W\bar m)}{\lambda + \kappa^2+\bar m^2} \,
%\cdot  \, d\bar m \nn
%\end{eqnarray}
%whereas
%\begin{eqnarray}
%&&\int_{-\infty}^{\infty} \, \frac{\int_0^{\infty}e^{ i(\bar m- m)y}\, dy}{\lambda + \kappa^2+\bar m^2} \,
%\( e^{- iW\bar m} -  e^{ iW\bar m}\)d\bar m\nn\\
%&&=-2i \,\int_{-\infty}^{\infty} \, \frac{\frac {1}{i(m-\bar m)}+\pi \delta_m(\bar m)
%}{\lambda + \kappa^2+\bar m^2} \cdot\sin(W\bar m)\;d\bar m\nn\\
%&&=-2 \,\int_{-\infty}^{\infty} \, \frac{\sin(W\bar m)\;d\bar m}
%{(m-\bar m)(\lambda + \kappa^2+\bar m^2)} -\frac{2 \pi i\, \sin(Wm)
%}{\lambda + \kappa^2+ m^2} \nn
%\end{eqnarray}
from which   \eqref{thetaeps} follows. 
\end{proof}
\vskip.5cm
\noindent
{\bf  {{\small P}{\scriptsize ROOF OF}} {{\small T}{\scriptsize HEOREM}} \ref{scalingthm}.}
First note that proposition \ref{scalingthmFL} shows the convergence of the Fourier-Laplace transform of the transition probabilities.
So the only left to prove is that we can get rid of the Laplace transform and have convergence in the time parameter, instead of the Laplace parameter.
This is possible because convergence of resolvents implies convergence of semigroups. More precisely,
denote by $T_\eps (t)$ the semigroup of the process $(U_\eps(t)-U, W_\epsi (t))$, and let $T(t)$ be the semigroup of the claimed limiting
process $(U(t),W(t))$, and $A_\epsi, A$ the corresponding generators. By \eqref{thetaeps}, we conclude that for compactly supported
smooth functions $f:\R^2\to\R$ , the resolvents converge, i.e., for all $\lambda>0$ 
\begin{equation*}
\lim_{\eps\to 0} (\lambda - A_\eps)^{-1}f =  (\lambda - A)^{-1}f.
\end{equation*}
Therefore, as smooth functions form a core for all $A_\epsi$ as well as for $A$  by \cite{bob}, Theorem 2.2, we conclude also convergence of the semigroups, i.e., for all
compactly supported continuous functions we have
$$
\lim_{\epsi\to 0} T_\eps (t)f= T(t)f,
$$
which in turn implies the convergence of the processes in the sense of finite dimensional distributions.

\qed

%\bpr[{Proof of Theorem \ref{scalingthm}}]

%From \eqref{thetaeps} we conclude the convergence of the characteristic functions
%for all $t\geq 0$. The process convergence in the sense of finite-dimensional distribution now follows immediately by
%this, combined with the Markov property.
%\end{proof}
%

\subsection{Local time at 0}
\label{local-sect}

\bl[\col{Laplace transform of probability to be at zero of sticky Brownian motion}]
\label{L:ciao}
Let $z\ge 0$ \col{and let $\mathbb{P}_z(B^S(t) =0)$ be the  probability  for a sticky Brownian
motion started at $z$ to be at 0 at time $t$.  We have}
\be\label{ciao}
\int_0^\infty e^{-\lambda t}\, \col{\mathbb{P}_z(B^S(t) =0)}
%\mathbb P_z(|B(s(t))|=0)
\,dt= \frac {\gamma}{\sqrt{2\lambda}+\gamma\lambda} \, e^{-\sqrt{2\lambda}z}.
\ee
\el
\bpr
The l.h.s. of \eqref{ciao} can be rewritten as
\beq
&&\lim_{M\to \infty}\frac 1 {2M}\int_{-M}^M\int_0^\infty e^{-\lambda t}\, \mathbb E_z[e^{-im|B(s(t))|}]\, dt \, dm= \lim_{M\to \infty}\frac 1 {2M}\int_{-M}^M \Psi_z^{S}(0,m,\lambda,\gamma)\, dm \nn \\
%&&=\int_0^\infty e^{-\lambda t}\, \mathbb P_z(|B(s(t))|=0)\, dt + \int_0^\infty e^{-\lambda t}\, \mathbb E_z[\mathbf 1_{B(s(t))>0}\;e^{-im|B(s(t))|}]\, dt
\eeq
Thus, using formula \eqref{PsiS}, we have that 
\beq
\int_0^\infty e^{-\lambda t}\, \col{\mathbb{P}_z(B^S(t) =0)}
%\mathbb P_z(|B(s(t))|=0)
\,dt & = &  
\col{c^{(\gamma)}(0,\lambda)\;
\; \lim_{M\to \infty}\frac 1 {2M}\int_{-M}^M \Psi^R_{z}(0, m,\lambda)\, dm}  \nn \\
 &+&
\col{(1-c^{(\gamma)}(0,\lambda))
\;\lim_{M\to \infty}\frac 1 {2M}\int_{-M}^M \Psi^A_{z}(0, m,\lambda)\, dm.}\nn
%&&=\frac {\gamma \,  e^{-\sqrt{2\lambda}z}}{\gamma\lambda+\sqrt{2\lambda}} +\frac{1}{\sqrt{2\lambda}}\,\cdot \lim_{M\to \infty}\frac 1 {2M}\int_{-M}^M \int_0^\infty  e^{-imx}\,\(\frac{\sqrt{2\lambda}-\gamma\lambda}{\sqrt{2\lambda}+\gamma\lambda}\,\cdot e^{-|z+x|\sqrt{2\lambda}} + e^{-\sqrt{2\lambda}|z-x|}\)\, dx\, dm \nn \\
%&&=\frac {\gamma \,  e^{-\sqrt{2\lambda}z}}{\gamma\lambda+\sqrt{2\lambda}} +\frac{1}{\sqrt{2\lambda}}\,\cdot \,\(\frac{\sqrt{2\lambda}-\gamma\lambda}{\sqrt{2\lambda}+\gamma\lambda} +1 \)\, e^{-\sqrt{2\lambda}z}\nn
\eeq
It is easy to see that  
\beq
\lim_{M\to \infty}\frac 1 {2M}\int_{-M}^M \Psi^R_{z}(0, m,\lambda)\, dm
%\int_0^\infty e^{-\lambda t}\, 
%\col{p^R(z,0,t)}
%\mathbb P_z(|B(t)|=0)
%\, dt
=0,
\eeq
and 
\be
\col{(1-c^{(\gamma)}(0,\lambda))
\;\lim_{M\to \infty}\frac 1 {2M}\int_{-M}^M \Psi^A_{z}(0, m,\lambda)\, dm= 
\frac {\gamma}{\sqrt{2\lambda}+\gamma\lambda} \, e^{-\sqrt{2\lambda}z}.}
\ee
\epr
\vskip.3cm
\noindent
{\bf  {{\small P}{\scriptsize ROOF OF}} {{\small P}{\scriptsize ROPOSITION}} \ref{local0}.}
The first statement \col{(equations \eqref{primus} and \eqref{Zeta})} follows from the fact that
\be
\int_0^\infty e^{-\lambda t}\,\col{\mathbb P_{w}\(w(t)=0\)}\, dt= G^{(\theta)}(w,0,0,\lambda)
\ee
\col{and the r.h.s. can be explicitly written thanks to Theorem \ref{Teo:Green}.}
Furthermore, the diffusive scaling gives
\beq
\int_0^\infty e^{-\lambda t}\,\col{\mathbb P_{w}\(W_\eps(t)=0\)}\, dt 
%&=&
%\eps^2\int_0^\infty e^{-\lambda \eps^2 t}\,\col{\mathbb P_{w}\(w(t)=0\)}\, dt
%\nn\\
&=&\eps^2 G^{(\theta)}(w,0,0,\lambda \eps^2)\nn \\
&=& \eps^2
 { \zeta_{\lambda \eps^2}^{w}  }\; \frac{1+\sqrt2 \, \gamma \eps^{-1} \zeta_{\lambda  \eps^2}^{\mathbf 1_{w=0}} }{\zeta_{\lambda  \eps^2}^{-1}+(\sqrt 2 \,\gamma  \lambda \eps-1)\zeta_{\lambda \eps^2}}\nn\\
&=& \frac{1+\sqrt 2 \, \gamma \eps^{-1} \(1-{\mathbf 1_{w=0}}\eps\sqrt{\lambda}\)  }{1+\eps\sqrt{\lambda}+(\sqrt 2\, \gamma \lambda \eps-1)
(1-\eps\sqrt{\lambda})} \; \eps^2
 \(1-\eps\sqrt{\lambda}  \)^{\sqrt 2 W \eps^{-1}}\;  \cdot (1+o(1))\nn
%&&
%=\lim_{\eps \to 0}
% \(1-\eps\sqrt{\lambda}  \)^{\sqrt 2 W \eps^{-1}}\; \frac{\eps+\sqrt 2 \, \gamma  \(1-{\mathbf 1_{w=0}}\eps\sqrt{\lambda}\)  }{2\sqrt{\lambda}+\sqrt 2 \, \gamma \lambda}
%= \frac{ \gamma }{\sqrt{2\lambda}+\gamma \lambda}\,e^{-\sqrt{2\lambda}W}\;\nn
\eeq
\col{from which formula \eqref{localstick} follows}.
\qed

\section{Applications in processes with duality.}

\subsection{\col{Time dependent covariances}}
\label{six}
%In this section we look at the (time dependent) covariance of $\eta_x(t)$ and $\eta_y(t)$ for IRW and processes
%of $\SIP$ and $\SEP$ type, when initially started from a product measure.
%As an application, we look at the macroscopic limit of the density field covariances.

\vskip.2cm
\noindent
\col{In this section we  look at the (time dependent) covariance of $\eta_x(t)$ and $\eta_y(t)$ for
the reference process with generator \eqref{referencep} when initially started from a product measure. 
We recall form section \ref{dual-section} that the reference process encompasses the generalized symmetric
exclusion process (SEP(j)), the symmetric inclusion process (SIP(k)) and the independent
random walk process (IRW).} We will denote by $\nu$ the initial product measure and by
\beq\label{prodex}
\rho(x):=\int \eta_x d\nu \qquad \text{and} \qquad
\chi(x):=\int \eta_x(\eta_x-1) d\nu
\eeq
We further denote
\be\label{rhot}
\rho_t(x)= \sum_{y} p_t(x,y) \rho(y)= \int \E_\eta (\eta_x(t)) d\nu
\ee
We will  denote by $X_t,Y_t$, resp. $\widetilde{X}_t, \widetilde{Y}_t$ the positions of  two dual particles,
resp.  two independent particles, and by   $\E_{x,y}$, the corresponding  expectations
when   particles start from $x,y$.
\col{The following proposition describes time-dependent covariances of particle numbers at time $t>0$ when starting
from an arbitrary initial distribution $\nu$, in terms of two dual particles.}

\bp[Time dependent covariances throught dual particles]
\col{Let $\{\eta(t): t\ge 0\}$ be a self-dual process 
with generator \eqref{referencep}} 
and $\alpha=1$.
Then
\beq\label{finalxi}
\Xi^{(\theta)}(t,x,y;\nu)
&=&\left(1+\theta\delta_{x,y}\right)\bigg\{\col{\E_{x,y} \left[\rho(X_t)\rho(Y_t) - \rho({\widetilde{X}_t})\rho({\widetilde{Y}_t})\right]}
\nonumber\\
&+&
\E_{x,y}\left[\mathbf 1_{X_t=Y_t} \left(\frac 1{1+\theta}\, \chi({X_t})-\rho({X_t})^2\right)\right]\bigg\}
\nonumber\\
&+&
\delta_{x,y}\left(\theta \rho_t(x)^2 +\rho_t(x)\right)
\eeq
where
\begin{equation}\label{table}
\theta=\left\{
\begin{array}{ll}
0 & \text{IRW}\\
+\frac{1}{k} & \text{SIP}(k)\\
-\frac{1}{j} & \text{SEP}(j).
\end{array}
\right.
\end{equation}
\ep

\bpr
To prove the theorem we use duality relations. \col{From section \ref{dual-section}}, duality functions for one and two particles dual configurations are given by:
\begin{equation}
D(\col{\delta_x,\eta})= c_1\eta_x,
\end{equation}
\begin{equation}
D(\col{\delta_x+\delta_y, \eta})=
\left\{
\begin{array}{ll}
c_1^2 \eta_x \eta_y &\quad \text{for} \: x \neq y\\
c_2 \eta_x(\eta_x-1)& \quad \text{for} \: x=y,
\end{array}
\right.
\end{equation}
with
\begin{equation}
c_1:=\left\{
\begin{array}{ll}
1 & \text{IRW}\\
\frac{1}{k} & \text{SIP}(k)\\
\frac{1}{j} & \text{SEP}(j)
\end{array}
\right. \qquad
c_2:=\left\{
\begin{array}{ll}
1 & \text{IRW}\\
\frac{1}{k(k+1)} & \text{SIP}(k)\\
\frac{1}{j(j-1)} & \text{SEP}(j).
\end{array}
\right.
\end{equation}
%
%We put
%\beq\label{d12}
%D(\delta_x,\eta)&=& c_1 \eta_x\nonumber\\
%D(2\delta_x,\eta) &=& c_2 \eta_x(\eta_x-1)
%\eeq
Hence, \col{for all cases,} $\theta+1=c_1^2/c_2$. Then we have
\beq\label{prodex1}
 \rho(x) =\frac{1}{c_1} \int D(\delta_x, \eta) d\nu \qquad \text{and}
\qquad \chi(x) =\frac{1}{c_2} \int D(2\delta_x, \eta) d\nu
\eeq
We denote by $p_t(x,y)$ the transition probability for one dual particle to go from $x$ to $y$ in time $t$.
%We assume that $p_t(x,y)$ is symmetric and translation invariant, i.e., $p_t(x,y)= p_t(y,x)= p_t(0,y-x)$.
Moreover
we denote by $p_t(x,y;u,v)$ the transition probability for two dual particles to go from $x,y$ to $u,v$ in time $t$.
We consider two cases: the first being $x\not= y$. Using self-duality, we write
\beq
&&\E_\eta \[\eta_x(t)\eta_y(t)\]-\rho_t(x) \E_\eta\[\eta_y(t)\] -\rho_t(y)\E_\eta\[\eta_x(t)\] +\rho_t(x)\rho_t(y)
\nonumber\\
&= &
\frac{1}{c_1^2}\E_\eta \[D(\delta_x+\delta_y, \eta(t))\] -\rho_t(x) \frac{1}{c_1}\E_\eta \[D(\delta_y, \eta(t))\]  
\nonumber\\
&-&
\rho_t(y)\frac{1}{c_1}\E_\eta \[D(\delta_x, \eta(t))\]  +\rho_t(x)\rho_t(y)
\nonumber\\
&=&
\sum_{u\not= v} p_t(x,y;u,v) \eta_u\eta_v + \frac{c_2}{c_1^2}\, \sum_u p_t(x,y;u,u) \eta_u(\eta_u-1)
\nonumber\\
&-& \rho_t(x) \sum_v p_t(y,v)\eta_v -\rho_t(y) \sum_{u} p_t(x,u)\eta_u +\rho_t(x)\rho_t(y).
\eeq
We now integrate the $\eta$-variable over $\nu$ and obtain
\beq
\Xi^{(\theta)}(t,x,y;\nu)
&=&
\sum_{u,v} p_t(x,y;u,v) \rho(u)\rho(v)
+
\sum_{u} p_t(x,y;u,u) \left(\frac{1}{1+\theta}\;\chi(u)-\rho(u)^2\right)
\nonumber\\
&-&
\rho_t(x)\rho_t(y)-\rho_t(y)\rho_t(x)+ \rho_t(x)\rho_t(y)
\nonumber\\
&=&
\sum_{u,v}\left[p_t(x,y;u,v)-p_t(x,u)p_t(y,v)\right]\rho(u)\rho(v)
\nonumber\\
&+&
\sum_u p_t(x,y;u,u) \left(\frac{1}{1+\theta}\;\chi(u)-\rho(u)^2\right).
\eeq
Now we turn to the second case $x=y$.
We have
\beq
\mathbb E_\eta\[\eta^2_x(t)\]&=&\frac 1 {c_2}\, \mathbb E_\eta \[D(2\delta_x, \eta(t))\]+\frac 1 {c_1}\mathbb E_\eta\[D(\delta_x, \eta(t))\]\nn\\
&=&\frac 1 {c_2}\, \mathbb E_{2\delta_x} \[D(\delta_{x(t)}+\delta_{y(t)},\eta)\]+\frac 1 {c_1}\mathbb E_{\delta_x}\[D(\delta_{x(t)},\eta)\]\nn\\
&=&\frac 1 {c_2}\, \(c_1^2\sum_{u\neq v} p_t(x,x;u,v) \eta_u \eta_v+c_2\sum_u p_t(x,x;u,u)\eta_u(\eta_u-1)\)+\sum_u p_t(x,u)\eta_{u}.\nn
\eeq
Then
\beq
&&\int\mathbb E_\eta\[\eta^2_x(t)\]d\nu=(1+\theta)\sum_{u\neq v} p_t(x,x;u,v) \rho(u) \rho(v)+\sum_u p_t(x,x;u,u)\chi(u)+\sum_u p_t(x,u)\rho(u).\nn\\
\eeq
This leads to
\beq
\Xi^{(\theta)}(t,x,x;\nu)
&=&
(1+\theta) \sum_{u,v}\left[p_t(x,x;u,v)-p_t(x,u)p_t(x,v)\right]\rho(u)\rho(v)
\nonumber\\
&+&
\sum_u p_t(x,x;u,u) \left(\chi(u)-(1+\theta)\rho(u)^2\right)
\nonumber\\
&+&
\theta\rho_t(x)^2 + \rho_t(x).
\eeq
This completes the proof of the Proposition.
\epr

\col{When the initial measure $\nu$ is assumed to be an 
homogeneous product measure then the expression
of the time dependent covariances via dual particles further simplifies.
This is the content of the next proposition.}

\bp[\col{Case of homogeneous $\nu$}]\label{homo}
Suppose that $\nu$ is a homogeneous  product measure
then, for self-dual processes with generator \eqref{referencep} 
and $\alpha=1$ we have
\beq
\int_0^\infty e^{-\lambda t}\, \Xi^{(\theta)}(t,x,y;\nu) \,dt=\left(1+\theta\delta_{x,y}\right)
\left(\frac {\chi} {\theta+1}-\rho^2\right)\;  \frac{\(1+\theta \zeta_{\lambda}^{\mathbf 1_{\col{x=y}}}\) \, \zeta_{\lambda}^{\col{\sqrt{2}|x-y|}}   }{\zeta_{\lambda}^{-1}+(\theta\lambda-1)\zeta_{\lambda}}
+
\frac{\delta_{x,y}}\lambda \,\left(\theta \rho_\nu^2 +\rho_\nu\right)\nn
\eeq
with $\zeta_\lambda$ as in \eqref{Zeta}.
\ep
\bpr
\col{We see from \eqref{finalxi} that if $\nu$ is an homogeneous product measure then $\rho(X_t) = \rho$.
As a consequence we have}
\beq\label{homo2}
\Xi^{(\theta)}(t,x,y;\nu)
=\left(1+\theta\delta_{x,y}\right)
\left(\frac \chi {(1+\theta)}-\rho^2\right)\,\pee_{x,y}\left(X_t=Y_t\right)+
\delta_{x,y}\left(\theta \rho^2 +\rho\right).
\eeq
%where from  Theorem \ref{Teo:Green}  we have
%\beq
%&&\int_0^\infty e^{-\lambda t}\,\mathbb P_{w}\(W_t=0\)\, dt= G^{(\theta)}(w,0,0,\lambda)=
% { \zeta_{\lambda}^{w}  }\; \frac{1+\theta \zeta_{\lambda}^{\mathbf 1_{w=0}} }{\zeta_{\lambda}^{-1}+(\theta \lambda
%-1)\zeta_{\lambda}}
%\eeq
%then the result.
\col{Taking the Laplace transform and using \eqref{primus} the result follows.}
\epr
\br
Notice that if $\nu$  is \col{an homogeneous product measure} that satisfies the condition
\begin{equation}\label{cond}
\int \eta_0(\eta_0-1)\, d\nu= (1+\theta) \(\int \eta_0\, d\nu\)^2
\end{equation}
then $\Xi^{(\theta)}(t,x,y;\nu)$ is not depending on $t$ and more precisely, $$\Xi^{(\theta)}(t,x,y;\nu)=0  \qquad \text{for} \quad x\neq y \qquad \text{and} \qquad
\Xi^{(\theta)}(t,x,x;\nu)= \chi+\rho - \rho^2.
$$
This corresponds to the case where $\nu$ is a stationary product measure 
for which \col{the covariance is constantly zero} and the variance \col{is equal at all times to the initial value, that is indeed} given by
$$
\text{Var}_\nu(\eta_0)=\int \eta_0^2 \ d\nu -\rho^2=\chi+\rho-\rho^2 .
$$
\er
%\vskip.3cm
%\noindent
%\red{
%For   $\alpha=\pm \tfrac 1 \theta$ as, respectively, for the standard definition of  inclusion and the exclusion we have
%\beq
%\int_0^\infty e^{-\lambda t}\, \Xi^{(\theta)}(t,x,y,\nu) \,dt= |\theta| \left(1+\theta\delta_{x,y}\right)
%\left(\frac {\chi_\nu} {\theta+1}-\rho_\nu^2\right)\;  \frac{\(1+\theta \zeta_{|\theta|\lambda}^{\mathbf 1_{w=0}}\) \, \zeta_{|\theta|\lambda}^{w}   }{\zeta_{|\theta|\lambda}^{-1}+(\theta|\theta|\lambda-1)\zeta_{|\theta|\lambda}}
%+
%\frac{\delta_{x,y}}\lambda \,\left(\theta \rho_\nu^2 +\rho_\nu\right)\nn
%\eeq
%with $\theta$ as in table \eqref{table}. This follows because
%\beq
%&&\int_0^\infty e^{-\lambda t}\,\mathbf P_{w}\(W_t=0\)\, dt= G^{(\theta,\alpha)}(w,0,0,\lambda)\nn\\
%&&=\frac 1 \alpha \; G^{(\theta,1)}\(w,0,0,\frac\lambda\alpha\)=
% \frac 1 \alpha \; { \zeta_{\lambda\alpha^{-1}}^{w}  }\; \frac{1+\theta \zeta_{\lambda\alpha^{-1}}^{\mathbf 1_{w=0}} }{\zeta_{\lambda\alpha^{-1}}^{-1}+(\tfrac {\theta \lambda
%}\alpha-1)\zeta_{\lambda\alpha^{-1}}}
%\eeq
%}
%\vskip.4cm
%\noindent
\subsection{Scaling of variance and covariances in the sticky regime}

\noindent
{\bf  {{\small P}{\scriptsize ROOF OF}} {{\small T}{\scriptsize HEOREM}} \ref{P:variance}.}
From Proposition \eqref{homo} we have 
 \beq\label{homo1}
&&\int_0^\infty e^{-\lambda t}\, \Xi^{(\theta_\eps)}(\eps^{-a}t,\eps^{-1}x,\eps^{-1}y;\nu) \,dt \nn \\
&& =\eps^{a}\int_0^\infty e^{-\lambda \eps^a s}\, \Xi^{(\theta_\eps)}(s,\eps^{-1}x,\eps^{-1}y;\nu) \,ds\nn\\
&&=\eps^a\left(1+\sqrt 2 \gamma \eps^{-1}\delta_{x,y}\right)
\left(\frac {\chi} {\sqrt 2 \gamma \eps^{-1}+1}-\rho^2\right)\;  \frac{\(1+\sqrt 2 \gamma \eps^{-1} \zeta_{\eps^a\lambda}^{\mathbf 1_{\col{x=y}}}\) \, \zeta_{\eps^a\lambda}^{\sqrt 2 |x-y|}   }{\zeta_{\eps^a\lambda}^{-1}+(\sqrt 2 \gamma \eps^{a-1}\lambda-1)\zeta_{\eps^a\lambda}}
\nn\\
&& +
\frac{\delta_{x,y}}\lambda \,\left(\eps^{-1}\sqrt 2 \gamma \rho^2 +\rho\right)\nn
\eeq
Now we use the fact that $\zeta_\delta = (1-\sqrt \delta)(1+o(1))$ for small $\delta$ and we obtain that, for $x\neq y$,
 \beq
\int_0^\infty e^{-\lambda t}\, \Xi^{(\theta_\eps)}(\eps^{-a}t,\eps^{-1}x,\eps^{-1}y;\nu) \,dt =
-\rho^2\sqrt 2 \gamma\;  \frac{ (1-\sqrt{\eps^a\lambda})^{\sqrt 2|x-y|}   }{2\sqrt{\lambda}\,\eps^{-(\tfrac a 2-1)}+\sqrt 2 \gamma \lambda} \, \cdot (1+o(1))\nn
\eeq
%for $a>2$
% \beq\label{homo1}
%\lim_{\eps\to 0}\eps^{-\tfrac a 2 +1}\int_0^\infty e^{-\lambda t}\, \Xi^{(\theta_\eps)}(\eps^{-a}t,x,x,\nu) \,dt=-\frac{\sqrt 2 \gamma\rho^2}{2\sqrt \lambda}
%\eeq
%for $a<2$
% \beq\label{homo1}
%\eps^{-\tfrac a 2 +1}\int_0^\infty e^{-\lambda t}\, \Xi^{(\theta_\eps)}(\eps^{-a}t,x,x,\nu) \,dt\simeq-\frac{1}{\sqrt 2 \gamma \lambda}\, e^{-\sqrt{2\lambda}W \eps^{-(1-\tfrac a 2)}}
%\eeq
as $\eps \to 0$. This produces the result for the covariance.
For $x=y$, we get
\beq
&&\int_0^\infty e^{-\lambda t}\, \Xi^{(\theta_\eps)}(\eps^{-a}t,\eps^{-1}x,\eps^{-1}x,\nu) \,dt =\nn\\
&&= \left\{\left({\chi}-\left(1+\sqrt 2 \gamma \eps^{-1}\right)\rho^2\right)\;  \frac{\(\eps+\sqrt 2 \gamma(1-\sqrt{\eps^a\lambda})\) }{2\sqrt{\lambda}\,\eps^{-(\tfrac a 2-1)}+\sqrt 2 \gamma \lambda}
+
\frac{1}\lambda \,\left(\sqrt 2 \gamma \eps^{-1} \rho^2 +\rho\right)\right\} \cdot (1+o(1))\nn
\eeq
%for $a> 2$
%\beq
%&&\eps\int_0^\infty e^{-\lambda t}\, \Xi^{(\theta_\eps)}(\eps^{-a}t,x,x,\nu) \,dt=
%\frac{\sqrt 2 \gamma \rho^2}\lambda \, \nn
%\eeq
%for $1<a<2$
%\beq
%&&\eps^{\tfrac a 2}\int_0^\infty e^{-\lambda t}\, \Xi^{(\theta_\eps)}(\eps^{-a}t,x,x,\nu) \,dt=
%\eps^{\tfrac a 2}\(\left({\chi}-\rho^2-\sqrt 2 \gamma \rho^2\eps^{-1}\right)\;  \frac{\(\eps+\sqrt 2 \gamma(1-\sqrt{\eps^a\lambda})\) }{2\sqrt{\lambda}\,\eps^{1-\tfrac a 2}+\sqrt 2 \gamma \lambda}
%+
%\frac{1}\lambda \,\left(\sqrt 2 \gamma \eps^{-1} \rho^2 +\rho\right)\)\nn\\
%&&=
%-\sqrt 2 \gamma \rho^2\eps^{-1+\tfrac a 2}\;  \frac{\sqrt 2 \gamma }{2\sqrt{\lambda}\,\eps^{1-\tfrac a 2}+\sqrt 2 \gamma \lambda}
%+
%\frac{1}\lambda \,\sqrt 2 \gamma \eps^{\tfrac a 2-1} \rho^2 \nn\\
%&&=\frac{2\sqrt 2 \gamma \rho^2}{\sqrt \lambda (2\sqrt \lambda \eps^{1-\tfrac a 2}+\sqrt 2 \gamma\lambda)}\nn\\
%&&=\frac{2 \rho^2}{\lambda\sqrt \lambda}
%\eeq
as $\eps \to 0$, from which the statement for the variance follows.
\qed

\subsection{Variance of the density fluctuation field}
{\bf  {{\small P}{\scriptsize ROOF OF}} {{\small T}{\scriptsize HEOREM}} \ref{variancethm}.}
\col{From the definitions of the variance of the density fluctuation field (Eq. \eqref{densfi}) and of the time dependent
covariances of the occupation numbers (Eq. \eqref{xi-def}), we have}
\be\label{fieldvariance}
\E_{\nu}\big[ \left(\mathcal X_\eps(\Phi,\eta,t)\right)^2\big]= \eps^2\sum_{x,y}\Phi(\eps x)\Phi(\eps y)\,\Xi^{(\theta_\eps)}(\eps^{-2}t, x,y;\nu).
\ee
\col{Using Theorem \ref{P:variance} for the time dependent covariances} we get
\beq
&&\int_0^\infty e^{-\lambda t}\;\E_{\nu}\big[ \left(\mathcal X_\eps(\Phi,\eta,t)\right)^2\big]\, dt\nn\\
&&= \col{\eps^2}\sum_{x\neq y}\Phi(\eps x)\Phi(\eps y)\int_0^\infty e^{-\lambda t}\Xi^{(\theta_\eps)}(\eps^{-2}t, x,y;\nu)\, dt+  \col{\eps^2}\sum_{x}\Phi(\eps x)^2\int_0^\infty e^{-\lambda t}\Xi^{(\theta_\eps)}(\eps^{-2}t, x,x;\nu)\, dt\nn\\
&&= \eps^2\sum_{x\neq y}\Phi(\eps x)\Phi(\eps y) \frac{\gamma \rho^2\,e^{-\sqrt{\lambda}\eps |x-y|}}{\sqrt{2\lambda}+\gamma \lambda}\,(1+o(1))+  \eps\sum_{x}\Phi(\eps x)^2\frac{2 \sqrt 2 \gamma\rho^2}{2\lambda+\gamma \lambda\sqrt{2\lambda}}\,(1+o(1))\nn
%&&=\frac{\gamma \rho^2\,}{\sqrt{2\lambda}+\gamma \lambda}\int \Phi(x)\Phi(y)\, e^{-\sqrt{\lambda} |x-y|}\, dx\, dy+ \frac{2 \sqrt 2\, \gamma\rho^2}{2\lambda+\gamma \lambda\sqrt{2\lambda}}\int \Phi(x)^2\, dx \nn
\eeq
from which the result follows.
\qed

{\section*{Acknowledgements}
FR acknowledges Stefan Grosskinsky for useful discussions in an early stage of this work.
The work of CG is supported in part by the Italian
Research Funding Agency (MIUR) through FIRB project grant n. RBFR10N90W.


\begin{thebibliography}{ccc}

\bibitem{bianchi}
A. Bianchi, S. Dommers, C. Giardin\`a,
Metastability in the reversible inclusion process,
Electronic Journal of Probability, Vol. 22, paper no. 70, 1--34 (2017).

\bibitem{bob}
A. Bobrowsky, 
Convergence of One-Parameter Operator Semigroups, 
Cambridge University Press, (2016).

\bibitem{borodin0}
A. Borodin, I. Corwin, T. Sasamoto,
From duality to determinants for q-TASEP and ASEP,
The Annals of Probability Vol. 42, No. 6, 2314--2382 (2014).

\bibitem{borodin}
A. Borodin, L.  Petrov,  
Integrable probability: from representation theory to Macdonald processes,
Probability Surveys Vol. 11, 1--58 (2014).

\bibitem{cgrs0}
G. Carinci, C. Giardin\`a, F. Redig, T.  Sasamoto, 
A generalized Asymmetric Exclusion Process with ${\mathscr {U}} _q (\mathfrak{sl}_2)$ stochastic duality,
Probability Theory and Related Fields Vol. 166, No. 3, 887--933 (2016).

\bibitem{cgrs}
G. Carinci, C. Giardin\`a, F. Redig, T.  Sasamoto, 
Asymmetric Stochastic Transport Models with $ {\mathscr {U}} _q (\mathfrak{su}(1, 1))$ Symmetry, 
Journal of Statistical Physics Vol. 163, No. 2, 239--279 (2016).

\bibitem{jara}
G. Carinci, M. Jara, F. Redig, 
KPZ scaling for the weakly asymmetric inclusion process, 
{\em preprint} (2017).

\bibitem{dmp}
A. De Masi, E. Presutti,
{\em Mathematical methods for hydrodynamic limits},
Lecture Notes in Mathematics, Springer-Verlag, Berlin (1991).

\bibitem{gkrv}
C. Giardin\`{a}, J. Kurchan, F. Redig, K. Vafayi,
Duality and hidden symmetries in interacting particle systems.
Journal of Statistical Physics  Vol. 135, No. 1, 25--55 (2009).

\bibitem{gkr}
C. Giardin\`{a}, F. Redig, K. Vafayi,
Correlation inequalities for interacting particle systems with duality.
Journal of Statistical Physics, Vol.141, No. 2, 242--263 (2010).

\bibitem{gross}
S. Grosskinsky, F. Redig and K. Vafayi,
Condensation in the inclusion process and related models,
Journal of Statistical Physics, Vol. 142, No. 5, 952--974, (2011).

\bibitem{ks}
I. Karatzas, S. Shreve,
{\em Brownian motion and stochastic calculus},
Graduate Texts in Mathematics, Vol. 113. Springer (2012).

\bibitem{landim}
C. Kipnis, Claude, C. Landim, 
{\em Scaling limits of interacting particle systems},
Grundlehren der Mathematischen Wissenschaften, 
Springer-Verlag, Berlin, (1999).

\bibitem{KMP}
C. Kipnis, C. Marchioro, E. Presutti,
Heat flow in an exactly solvable model,
Journal of Statistical Physics, Vol. 27, No.1,  65--74 (1982).

\bibitem{pot}
V. Kostrykin, J. Potthoff, R. Schrader, 
Brownian motions on metric graphs: Feller Brownian motions on intervals revisited, 
preprint arXiv 1008.3761v2 (2010).

\bibitem{ligg}
T.M. Liggett,
{\em Interacting particle systems},
Springer (1985).

\bibitem{schutz}
V. Popkov, G.M. Sch\"{u}tz,  
Integrable Markov processes and quantum spin chains,
Matematicheskaya Fisika, Analiz, Geometriya, Vol. 9, No. 3, 401--411 (2002).

\bibitem{RS}
M.Z. R\'acz,  M. Shkolnikov.
Multidimensional sticky Brownian motions as limits of exclusion processes,
The Annals of Applied Probability, Vol. 25, No.3, 1155--1188 (2015).


\bibitem{schutz10}
G.M. Sch\"{u}tz,
Exactly Solvable Models for Many-Body Systems Far from Equilibrium,
in {\em Phase transitions and critical phenomena}, edited by C. Domb and J. Lebowitz, Vol.19, 1--251 (2001).

\bibitem{schutz11}
G.M. Sch\"{u}tz,
Exact solution of the master equation for the asymmetric exclusion process,
Journal of Statistical Physics, Vol. 88, No. 1--2, 427--445 (1997).

\bibitem{SS}
\col{
G. Sch\"utz, S. Sandow, 
Non-Abelian symmetries of stochastic processes: Derivation of correlation functions for 
random-vertex models and disordered-interacting-particle systems,
Physical Review  E Vol. 49, 2726 (1994).
}

\bibitem{tw}
C. Tracy, H. Widom,
Integral formulas for the asymmetric simple exclusion process,
Communications in Mathematical Physics Vol. 279, No. 3, 815--844 (2008).
 
\bibitem{spohn}
H. Spohn.
Stochastic integrability and the KPZ equation,
published in IAMP News Bulletin, April 2012,
preprint arXiv:1204.2657 (2012).
\end{thebibliography}
\end{document}